\journal{Journal of Computational Physics}
\providecommand{\doi}[1]{%
	\begingroup
	\let\bibinfo\@secondoftwo
	\urlstyle{rm}%
	\href{http://dx.doi.org/#1}{%
		doi:\discretionary{}{}{}%
		\nolinkurl{#1}%
	}%
	\endgroup
}
\renewcommand{\vec}[1]{\mathbf{#1}}
\let\storeBeta=\beta
\renewcommand\beta{\relax\ifmmode{\storeBeta}\else{$\storeBeta$}\fi\xspace}
\let\storeAlpha=\alpha
\renewcommand\alpha{\relax\ifmmode{\storeAlpha}\else{$\storeAlpha$}\fi\xspace}
\newcommand{\abs}[1]{\left| #1 \right|} 
\renewcommand{\d}[2]{\frac{\mathrm{d} #1}{\mathrm{d} #2}} 
\newcommand{\pd}[2]{\frac{\partial #1}{\partial #2}} 
\let\baraccent=\= 
\renewcommand{\=}[1]{\stackrel{#1}{=}} 
\newcommand*\colvec[1]{
	\global\colveccount#1
	\begin{pmatrix}
		\colvecnext
	}
	\def\colvecnext#1{
		#1
		\global\advance\colveccount-1
		\ifnum\colveccount>0
		\\
		\expandafter\colvecnext
		\else
	\end{pmatrix}
	\fi
}
\newcommand{\norm}[1]{\left\lVert#1\right\rVert}
\def\tphi{\widetilde{\phi}}
\def\tmu{\widetilde{\mu}}
\def\tp{\widetilde{p}}
\def\tvi{\widetilde{v}_i}
\def\tvj{\widetilde{v}_j}
\def\tJi{\widetilde{J}_i}
\def\tJj{\widetilde{J}_j}
\def\tpsi{\widetilde{\psi}}
\def\tvecv{\widetilde{\vec{v}}}
\newtheorem{lemma}{Lemma}
\newtheorem{proposition}{Proposition}
\newtheorem{theorem}{Theorem}
\newtheorem{definition}{Definition}
\newtheorem{remark}{Remark}
\newtheorem{corollary}{Corollary}
\newtheorem{claim}{Claim}
\pgfplotsset{
	table/search path={Figures},
}
	\pgfplotsset{compat=1.3}
	\pgfplotsset{
		discard if/.style 2 args={
			x filter/.code={
				\edef\tempa{\thisrow{#1}}
				\edef\tempb{#2}
				\ifx\tempa\tempb
				\def\pgfmathresult{inf}
				\fi
			}
		},
		discard if not/.style 2 args={
			x filter/.code={
				\edef\tempa{\thisrow{#1}}
				\edef\tempb{#2}
				\ifx\tempa\tempb
				\else
				\def\pgfmathresult{inf}
				\fi
			}
		}
	}
\newcommand{\dendro}{\textsc{Dendro}}
\NewDocumentCommand{\codeword}{v}{%
	\texttt{\textcolor{blue}{#1}}%
}
\definecolor{codegreen}{rgb}{0,0.6,0}
\definecolor{codegray}{rgb}{0.5,0.5,0.5}
\definecolor{codepurple}{rgb}{0.58,0,0.82}
\definecolor{backcolour}{rgb}{0.95,0.95,0.92}
\lstdefinestyle{mystyle}{
	backgroundcolor=\color{backcolour},   
	commentstyle=\color{codegreen},
	keywordstyle=\color{magenta},
	stringstyle=\color{codepurple},
	basicstyle=\ttfamily\footnotesize,
	breakatwhitespace=false,         
	breaklines=true,                 
	captionpos=b,                    
	keepspaces=true,                  
	showspaces=false,                
	showstringspaces=false,
	showtabs=false,                  
	tabsize=2
}
\newcolumntype{C}[1]{>{\centering\arraybackslash}p{#1}}
\begin{document}

\begin{frontmatter}

\title{Simulating two-phase flows with thermodynamically consistent energy stable Cahn-Hilliard Navier-Stokes equations on parallel adaptive octree based meshes}

\author[isuMechEAddress]{Makrand A. Khanwale}
\ead{khanwale@iastate.edu}

\author[isuMechEAddress]{Alec D. Lofquist\fnref{AlecFootnote}}
\ead{alces14@gmail.com}

\author[utahAddress]{Hari Sundar}
\ead{hari@cs.utah.edu}

\author[isuMathAddress]{James A. Rossmanith\corref{correspondingAuthor}}
\ead{rossmani@iastate.edu}

\author[isuMechEAddress]{Baskar~Ganapathysubramanian\corref{correspondingAuthor}}
\ead{baskarg@iastate.edu}

\cortext[correspondingAuthor]{Corresponding authors}
\fntext[AlecFootnote]{Presently at Affirm Inc., San Francisco, CA, USA}

\address[isuMechEAddress]{Department of Mechanical Engineering, Iowa State University, Iowa, USA 50011}
\address[isuMathAddress]{Department of Mathematics, Iowa State University, Iowa, USA 50011}
\address[utahAddress]{School of Computing, The University of Utah, Salt Lake City, Utah, USA 84112}

\begin{abstract}
	We report on simulations of two-phase flows with deforming interfaces at various density contrasts by solving thermodynamically consistent  Cahn-Hilliard Navier-Stokes equations. An (essentially) unconditionally energy-stable Crank-Nicolson-type time integration scheme is used. Detailed proofs of energy stability of the semi-discrete scheme and for the existence of solutions of the advective-diffusive Cahn-Hilliard operator are provided. We discretize spatial terms with a conforming continuous Galerkin finite element method in conjunction with a residual-based variational multi-scale (VMS) approach in order to provide pressure stabilization. We deploy this approach on a massively parallel numerical implementation using fast octree-based adaptive meshes. A detailed scaling analysis of the solver is presented.  Numerical experiments showing convergence and validation with experimental results from the literature are presented for a large range of density ratios.     
\end{abstract}

\begin{keyword}
two-phase flows \sep energy stable \sep adaptive finite elements \sep octrees \sep scalable
\end{keyword}

\end{frontmatter}



\section{Introduction}
\label{sec:introduction}
    Accurate description of the dynamics of the interface in two-phase flows is essential from two perspectives: (1) accurate resolution of interfacial shapes and (2) accurate calculations of the four-way interaction of the coupling between dispersed and continuous phases. These two perspectives significantly influence modeling strategies. For example, the former becomes important in the context of simulating equilibrium shapes of bubbles and droplets (for instance, in designing micro-fluidic devices for effective bio-separations and related material science applications).  The latter becomes important for understanding the fundamental coupling of energies due to the motions of the dispersed phase, for example in bubbly flows.  While important, modeling two phase flows with a resolved description of the interfacial dynamics is challenging because of discontinuities due to surface tension and topological changes of the interface.  

	A standard approach to representing interfacial phenomena is to use jump boundary conditions, which requires interface-fitted meshes \citep{Notz2004}.  Although theoretically promising, this approach is non-trivial and impractical for large topological changes in the interfaces, especially in 3D.  An alternative description of the interface is to smear the sharp discontinuity to a numerically resolvable length scale. There are many flavors of this approach; e.g., the popular level set methods \citep{Osher2006} and front tracking approaches \cite{Unverdi1992}. In these methods a tracking variable (or an indicator field) is used to track the interface (usually on a fixed grid). If one selects a physical property like density as an indicator function, and approximates the forcing due to the motion of the interface as the product of the gradient of the indicator function over the interface and the curvature of the interface, then we get the continuum surface models \citep{Brackbill1992}. Each of these approaches has relative merits and demerits, and we refer the interested reader to the detailed discussion in \citet{Prosperetti2009}. 
	
	Phase field methods are another class of approaches to implicitly track interfaces. They offer some advantages including mass conservation, thermodynamic consistency and a natural way to account for external effects. The underlying idea of phase field models is similar to the level set methods, i.e. to use a smooth scalar field (phase field) to track the interface on a fixed grid. In phase field methods, an advective Cahn-Hilliard equation is used to track the motion of the (smeared or diffuse) interface.  Compared to the level set advection equation, the advective Cahn-Hilliard equation has an added diffusive term that is inherent to the thermodynamic description of the interface.  This diffusion term is analogous to numerical diffusion, which stabilizes the numerical schemes and improves mass conservation\footnote{We also show that this diffusive term helps in proving existence result for the phase field.}.  The other advantage of using Cahn-Hilliard based phase field models is that the surface tension is represented via a free energy-based description with well established footing in thermodynamics \citep{Jacqmin1996,Jacqmin2000} (see \citet{Anderson1998} and references therein for detailed discussion). 
	
	In all of these models a set of momentum equations are coupled with the interface tracking equation.  Typically, a single set of momentum equations are solved for an ``averaged mixture velocity'' with variable density and viscosity (which are inferred from the phase field). Even for incompressible fluids, the solenoidality (divergence-free) of the averaged mixture velocity depends on the type of averaging (mass- or volume- averaging).  Volume averaging usually results in solenoidal mixture velocity, while mass averaging results in a non-solenoidal mixture velocity leading to the so called quasi-incompressible models (see \citet{Guo2017,Shokrpour2018}, and references therein for the development of mass averaged models). The solenoidality of the mixture velocity is a useful feature while constructing numerical schemes\footnote{This property however is only true under strictly isothermal conditions.}. 
	
	In the literature there are many versions of the Cahn-Hilliard Navier-Stokes (CHNS) coupled models \citep{Hohenberg1977,Jacqmin2000,Villanueva2006,Dong2014,Xie2015}. Several of these models do not ensure thermodynamic consistency (i.e. ensure second law is followed)~\citep{Jacqmin2000,Villanueva2006,Xie2015}, nor are they compatible under high density and viscosity ratios of the two phases (model-H by \citet{Hohenberg1977} was not compatible with unequal density ratios). Thus, while generally useful, there are no guarantees that such models work for high density and viscosity contrasts and remain predictive under long simulation horizons. The original CHNS model was developed for modeling binary fluids with equal densities and viscosities, which are the so-called model-H equations by~\citet{Hohenberg1977} and later extended for unequal densities and viscosities.  Recasting these equations in a thermodynamically consistent manner was first attempted by \citet{Gurtin1996}, and the most recent contribution came from \citet{Abels2012}, who derive a thermodynamically consistent model with a solenoidal mixture velocity.  In this paper we choose the model proposed by \citet{Abels2012}, which ensures that the system follows an energy law consistent with the second law of thermodynamics and which does not assume equal densities and viscosities for the two fluids. 
	
	
	We identify three issues that have to be considered when designing numerical approaches for solving the CHNS equations. The first issue is that we would like to have a scheme that is provably energy-stable under a generously large time-step. This endows several promising traits to the numerical approach, including the ability to use larger time steps when marching towards a steady state solution (or towards a long time horizon). The second issue is the necessity of resolving the interfacial length scales for accurate capture of interface dynamics \cite{Xie2015}. This becomes especially challenging during  topological transitions (e.g. filaments, pinch-off points) with intricate changes over small length scales. In order to maintain computational efficiency,
	we require adaptive meshing strategies in order to resolve the interface properly. The need for adaptive meshing is especially important in 3D, where the degrees of freedom rise rapidly with
	the grid spacing. 
	The third issue is the spatial discretization of the CHNS model, considering that the solenoidality of the velocity (i.e. the incompressibility constraint) requires satisfaction of the discrete inf-sup condition (see section 3.3 of \citep{Volker2016} for details). We specifically desire a conforming Galerkin finite element approach for which efficient parallel h-refinement strategies are straightforward and available (for instance \citep{Burstedde2011,Sundar2007,Sundar2008}). These three issues serve as the motivation for the current work. Specifically, our contributions are as follows.
	\begin{enumerate}
		\item \textbf{Energy stability:} We develop a time integration scheme that maintains energy stability for a large range of time steps while also satisfying mass conservation.  
		\item \textbf{Conforming finite elements via stabilization:} We develop a variational multiscale based treatment of the equations that allows us to use conforming Galerkin finite elements. 
		\item \textbf{Parallel adaptive meshing:} We implement the resulting numerical methods
		with a fast, massively parallel, adaptive meshing strategy based on octree meshes for resolving the length scales of the interface dynamics. 
	\end{enumerate}

	\textbf{Energy stability:} \citet{Kim2004a} reported one of the earliest studies on energy stable schemes for a CHNS model with equal densities\footnote{However, this model was not thermodynamically consistent.}. \citet{Feng2006} (and then \citet{Han2015}) followed with a comprehensive analysis of this model reporting energy laws and other bounds on the numerical solutions. \citet{Shen2010a,Shen2010b} extended this analysis for a CHNS model with unequal densities. Subsequently, \citet{Chen2016} reported analysis on the stability of time integration schemes along with a finite difference adaptive strategy for a thermodynamically consistent CHNS system.  In other recent work, a Scalar Auxiliary Variable (SAV) approach has been proposed to construct energy stable schemes~\citep{Shen2018,Zhu2019}. \citet{Guo2017} recently reported a detailed analysis for a mass averaged mixture velocity CHNS system.  
	In \cref{subsec:time_scheme} of the present work we develop an implicit time scheme (similar to Crank-Nicolson) that is energy stable for large time steps, while also discretely mass conserving. 
	The benefit of such a time integration scheme is that it does not require the storage of more than one previous time step, while still providing accuracy and ensuring energy stability.  We prove that the time scheme is (essentially) unconditionally energy stable.  We also subsequently prove in \cref{subsec:semidiscrete_var_prob} the existence of solutions of the time scheme.

	\textbf{Conforming finite elements via stabilization:} In order to easily leverage parallel adaptive meshing tools it is helpful to have conforming finite elements. Most of the studies cited above used mixed element methods (LBB stable pairs of elements) to discretize the momentum equations in the coupled CHNS system. The distinct discrete spaces for pressure and velocities ensure local enforcement of solenoidality and satisfaction of the discrete inf-sup condition (also called the saddle point problem). Alternatively, the saddle point problem can be resolved using stabilization (popularly known as \textit{grad-div} stabilization), which enables using conforming finite elements. 
	Variational multi-scale methods (VMS) provide a principled approach to derive such stabilization. They rely on a projection based decomposition of velocity and pressure fields into coarse and fine scale components following the ansatz of large eddy simulations \citep{Hughes2000}. There are multiple flavors of VMS models based on the choice of decomposition and how the fine scales are approximated. We refer interested readers to a recent and excellent review by \citet{Ahmed2017}. In this work, we develop a formulation based on the Residual Based Variational Multi-scale Method (RBVMS) \cite{Bazilevs2007} with conforming Galerkin finite elements in \cref{subsec:space_scheme}.
	
	\textbf{Parallel adaptive meshing:} While the concept of adaptive space partitions is not novel, developing such methods for large distributed systems presents significant challenges. The challenge is to adaptively resolve the mesh~\citep{Coupez2013,Hachem2013,Hachem2016}, while ensuring appropriate load balancing across the computing cluster.  A promising approach is to use structured meshes (especially based on octrees)~\citep{Khokhlov1998}, where the spatial structure of the elements is leveraged to design efficient data exchange and communication, thus resulting in fast parallel algorithms. In this work we use the octree based library~\dendro~which is well-established for distributed octree-based (structured) meshing algorithms. \dendro~includes novel bottom-up octree construction algorithms that require only local computation followed by a single distributed sort~\citep{Sundar2007,Sundar2008}. \dendro~also implements a 2:1 balancing algorithm\footnote{Enforcing that adjacent octants differ by at most a factor of 2 in size is a condition often enforced during meshing to make subsequent numerical calculations convenient.} that by preemptively communicating information between processes avoids synchronizations and has a provably lower communication cost. \dendro, is freely available  and has been used by several research groups across the world as the meshing scheme for a variety of methods such as finite element computations, fast multipole methods, fast Gauss transforms, and for a range of applications from cardiac biomechanics to direct numerical simulation of blood flow~\citep{Sundar2008}.  We detail adaptive meshing and scalability of our framework in \cref{sec:octree_mesh} and \cref{sec:scaling}, respectively.  
	

\section{Governing equations}
\label{sec:governing_equations}
Consider the bounded domain $\Omega \subset \mathbb{R}^n$, where $n = 2$ or $3$, containing two immiscible fluids, and consider the time interval $[0, T]$. Let $\rho_{+}$ ($\eta_{+}$ ) and $\rho_{-}$ ($\eta_{-}$) denote the specific density (viscosity) of the fluids, respectively. We define a phase field, $\phi$, that tracks the fluids, i.e. takes a value of $+1$ and $-1$ in domains occupied by each of the fluids, respectively. $\phi$ varies continuously across the interface between $+1$ and $-1$.  The non-dimensional density\footnote{Our non-dimensional form uses the specific density/viscosity of fluid 1 as the non-dimensionalising density/viscosity.} is given by $\rho(\phi) = \alpha\phi + \beta$, where $\alpha = \frac{\rho_+ - \;\rho_-}{2\rho_+}$ and $\beta = \frac{\rho_+ + \;\rho_-}{2\rho_+}$. Similarly, non-dimensional viscosity is given by $\eta(\phi) = \gamma\phi + \xi$, where $\gamma = \frac{\eta_+ - \;\eta_-}{2\eta_+}$ and $\xi = \frac{\eta_+ + \;\eta_-}{2\eta_+}$. The governing equations in their non-dimensional form are as follows:
\begin{align}
\begin{split}
	\text{Momentum Eqns:} & \quad \pd{\left(\rho(\phi) v_i\right)}{t} + \pd{\left(\rho(\phi)v_iv_j\right)}{x_j} + \frac{1}{Pe}\pd{\left(J_jv_i\right)}{x_j} +\frac{Cn}{We} \pd{}{x_j}\left({\pd{\phi}{x_i}\pd{\phi}{x_j}}\right) \\
	& \quad \quad \quad + 
	\frac{1}{We}\pd{p}{x_i} - \frac{1}{Re}\pd{}{x_j}\left({\eta(\phi)\pd{v_i}{x_j}}\right) - \frac{\rho(\phi)\hat{{g_i}}}{Fr} = 0, \label{eqn:nav_stokes} 
	\end{split} \\
	\text{Thermo Consistency:} & \quad J_i = \frac{\left(\rho_- - \rho_+\right)}{2} \, m(\phi) 
	\, \pd{\mu}{x_i},\\
	\text{Solenoidality:} & \quad \pd{v_i}{x_i} = 0, \label{eqn:solenoidality}\\
	\text{Continuity:} & \quad \pd{\rho(\phi)}{t} + \pd{\left(\rho(\phi)v_i\right)}{x_i}+
	\frac{1}{Pe} \pd{J_i}{x_i} = 0, \label{eqn:cont}\\
	\text{Chemical Potential:} & \quad \mu = \psi'(\phi) - Cn^2 \pd{}{x_i}\left({\pd{\phi}{x_i}}\right)  ,\label{eqn:mu_eqn} 
	\\ 
	\text{Cahn-Hilliard Eqn:} & \quad \pd{\phi}{t} + \pd{\left(v_i \phi\right)}{x_i} - \frac{1}{PeCn} \pd{}{x_i}\left({\pd{\left(m(\phi)\mu\right)}{x_i}}\right) = 0. 
	\label{eqn:phi_eqn}
\end{align}
In the above equations, $\vec{v}$ is the volume averaged mixture velocity\footnote{We use Einstein notation throughout the manuscript. In this notation $v_i$ represents the $i^{\text{th}}$ component of the vector $\vec{v}$, and any repeated index is implicitly summed over.}, $p$ is the volume averaged pressure, $\phi$ is the phase field (interface tracking variable), and $\mu$ is the chemical potential.  The mobility, $m(\phi)$, is assumed to be a constant with a value of one.  The non-dimensional parameters are as follows: Peclet: $Pe = \frac{u_{r} L_{r}^2}{m\sigma}$; Reynolds: $Re = \frac{u_{r} L_{r}}{\nu_{r}}$; Weber: $We = \frac{\rho_{r}u_{r}^2 L_{r}}{\sigma}$; Cahn: $Cn = \frac{\varepsilon}{L_{r}}$; and Froude: $Fr = \frac{u_{r}^2}{gL_{r}}$. $u_{r}$ and $L_r$ denote the reference velocity and length, respectively. $\hat{\vec{g}}$ is a unit vector defined as $\left(0, -1, 0\right)$ denoting the direction of gravity. $\varepsilon$ is the interfacial thickness.  $\psi(\phi(\vec{x}))$ is a known free-energy function.  For the detailed analysis and importance of the thermodynamic consistency term see~\citet{Zhu2019therm,Zhu2020}.
We use the polynomial form of the free energy density defined as follows:
\begin{align}
		\psi(\phi) = \frac{1}{4}\left( \phi^2 - 1 \right)^2 \qquad \text{and} \qquad
		\psi'(\phi) = \phi^3 - \phi.
\end{align}

The system of equations \cref{eqn:nav_stokes} -- \cref{eqn:phi_eqn} has a dissipative energy law given by: 
\begin{equation}
\d{E_{\text{tot}}}{t} = -\frac{1}{Re}  \int_{\Omega} \frac{\eta(\phi)}{2} \norm{\nabla \vec{v}}_F^2 \mathrm{d}\vec{x} - \frac{Cn}{We} \int_{\Omega}m(\phi) \norm{\nabla \mu}^2  \mathrm{d}\vec{x},
\end{equation}
where the total energy is 
\begin{equation}
E_{\text{tot}}(\vec{v},\phi, t) = \int_{\Omega}\frac{1}{2}\rho \norm{\vec{v}}^2 \mathrm{d}\vec{x} + \frac{1}{CnWe}\int_{\Omega} \left(\psi(\phi) + \frac{Cn^2}{2} \norm{\nabla\phi}^2 + \frac{1}{Fr} \rho(\phi) y \right) \mathrm{d}\vec{x}.
\label{eqn:energy_functional}
\end{equation}
The norms used in the above expression are the Euclidean vector norm and the Frobenius matrix norm:
\begin{equation}
\norm{\vec{v}}^2 := \sum_i \abs{v_i}^2 \qquad \text{and} \qquad 
\norm{\nabla\vec{v}}^2_F := \sum_i \sum_j \abs{\frac{\partial v_i}{\partial x_j}}^2.
\end{equation}

\begin{remark}Realistically, the thickness of the interface (parametrized by the Cahn number) is usually in the nanometer range. Resolving this scale is computationally intractable, as all the other scales in the problem are much larger. Therefore, a standard ansatz that diffuse interface models follow is that the solution tends to the real physics in the limit of $Cn \rightarrow 0$. Usually, one starts from a coarse Cahn number and decreases it until the simulated dynamics is independent of the Cahn number. However, the choice of Cahn number, $Cn$, determines the Peclet number: $Pe$. The Peclet number, $Pe = \frac{u_{r} L_{r}^2}{m\sigma}$, is the ratio of the advection timescale to the time scale of the diffuse interface to relax to an equilibrium $\tanh$ profile (which is a purely computational construct).  \citet{Magaletti2013} reported a careful asymptotic analysis of these timescales and suggests the scaling: $1/Pe = \alpha Cn^2$. We use this scaling with $\alpha = 3$.
\end{remark}

\begin{remark} The volume averaged mixture velocity ($\vec{v}$) is solenoidal (see \cref{eqn:solenoidality}), but momentum ($\rho \vec{v}$) is not (see \cref{eqn:cont}).  Equation \eqref{eqn:cont} is the mass conservation law, and technically the solenoidality of the mixture velocity has nothing to do with mass conservation law, but it is a convenient feature of the model. We make this distinction because in the context of incompressible Newtonian single phase flow, mass conservation reduces to solenoidality of the velocity field (the d'Alembert condition), which is not the case here.
\end{remark}


\section{Numerical method and its properties}
\label{sec:numerical_tecniques}
We seek a Crank-Nicolson type time-stepping scheme for the set of equations given by \cref{eqn:nav_stokes} -- \cref{eqn:phi_eqn}. Such a method will provide accuracy and stability for large time-steps with storage of only one previous time-step. Additionally, using this implicit time scheme allows us to prove existence of solutions in the semi-discrete sense for the Cahn-Hilliard equation.

Let $\delta t$ be a time-step; let any time be given by $t^k := k \delta t$; and 
let us define the following time-averages: 

\begin{equation}
	\widetilde{\vec{v}}^{k} := \frac{\vec{v}^{k} + \vec{v}^{k+1}}{2}, \quad \tp^{k} := \frac{{p}^{k+1}+{p}^{k}}{2},
	\quad \tphi^{k} := \frac{{\phi}^{k} + {\phi}^{k+1}}{2}, \quad \text{and} \quad \tmu^{k} := \frac{{\mu}^{k} + {\mu}^{k+1}}{2},
\end{equation}
and the following potential function evaluations:
\begin{equation}
\label{eqn:psi_ave_def}
	\tpsi := \psi\left( \tphi^k \right) \qquad \text{and} \qquad
	\tpsi' := \psi'\left( \tphi^k \right).
\end{equation}
With these definitions, the time-discretized scheme can be written as follows: 
\begin{align}
\begin{split}
	\text{Momentum Eqns:} & \quad \rho\left(\phi^{k+1}\right)\frac{\left(v^{k+1}_i - v^k_i\right)}{\delta t} + \rho\left(\phi^{k+1}\right) \, \tvj^{k} \, \pd{\tvi^{k}}{x_j} + 
	\frac{1}{Pe} \tJj^{k} \, \pd{\tvi^{k}}{x_j}   \\ 
	& \quad  + \frac{Cn}{We} \pd{}{x_j}\left({\pd{\tphi^{k}}{x_i}\pd{\tphi^{k}}{x_j}}\right) +
	\frac{1}{We}\pd{\tp^{k}}{x_i} - \frac{1}{Re}\pd{}{x_j}\left(\eta(\phi^{k+1}){\pd{\tvi^{k}}{x_j}}\right)\\
	& \quad -\frac{\rho\left(\phi^{k+1}\right)\hat{g_i}}{Fr} = 0,\label{eqn:disc_nav_stokes_semi}
	\end{split} \\ 
	\text{Thermo Consistency:} & \quad \tJi^{k} = \frac{\left(\rho_- - \rho_+\right)}{2}
	 \, \pd{\tmu^{k}}{x_i},\label{eqn:disc_time_ns_semi}\\
	\text{Solenoidality:} & \quad \pd{\tvi^{k}}{x_i} = 0, \label{eqn:disc_cont_semi} \\
	\text{Continuity:} & \quad \frac{\left(\rho\left(\phi^{k+1}\right) - \rho\left(\phi^{k}\right)\right)}{\delta t} + \pd{ \left(\rho\left(\tphi^{k}\right)\tvi^{k}\right) }{x_i} + \frac{1}{Pe} \pd{\tJi^{k}}{x_i}= 0, \label{eqn:disc_consv_semi} \\
	\text{Chemical Potential:} & \quad \tmu^{k} = \tpsi' - Cn^2 \pd{}{x_i}\left({\pd{\tphi^{k}}{x_i}}\right), \label{eqn:disc_mu_eqn_semi}\\
	\text{Cahn-Hilliard Eqn:} & \quad \frac{\left(\phi^{k+1} - \phi^k\right)}{\delta t} + \pd{\left(\tvi^{k} \, \tphi^{k}\right)}{x_i} - \frac{1}{PeCn} \pd{}{x_i}\left({\pd{\tmu^{k}}{x_i}}\right) = 0,
	\label{eqn:disc_time_phi_eqn_semi}
\end{align}
with boundary conditions 
$\pd{\tmu}{x_i} \, \hat{n}_i = 0$, $\pd{\tphi}{x_i} \, \hat{n}_i = 0$, where $\hat{\vec{n}}$ is the outward pointing normal to the boundary $\partial \Omega$, and $\widetilde{\vec{v}}^{k} = \vec{0}$ on $\partial \Omega$.
In the definition below we use the notation that $\vec{v} \in \vec{H}^1_{0} \implies \vec{v}=\vec{0}$
on $\partial \Omega$.
Note that we have chosen to write the momentum equation, \cref{eqn:disc_nav_stokes_semi}, in convective form by combining the conservative form, \cref{eqn:nav_stokes}, and the continuity equation,
\cref{eqn:cont}.

\subsection{Fully discrete scheme}
\label{subsection:fully_discrete_scheme}
The fully discrete method proposed in this work is a continuous Galerkin (cG(1)) spatial discretization of \cref{eqn:disc_nav_stokes_semi} -- \cref{eqn:disc_time_phi_eqn_semi}. The fully discrete method
is based on the variational form of \cref{eqn:disc_nav_stokes_semi} -- \cref{eqn:disc_time_phi_eqn_semi},
which we define below.
%
\begin{definition}
	Let $(\cdot,\cdot)$ be the standard $L^2$ inner product. The time-discretized variational
	problem can stated as follows: find $\vec{v}^{k+1}(\vec{x}) \in \vec{H}_0^1(\Omega)$, $p^{k+1}(\vec{x})$, $\phi^{k+1}(\vec{x})$, $\mu^{k+1}(\vec{x})$ $\in {H}^1(\Omega)$ such that
	\begin{align}
	\begin{split}
	\text{Momentum Eqns:} & \quad \left(w_i, \, \rho\left(\phi^{k+1}\right)\frac{\left(v^{k+1}_i - v^k_i\right)}{\delta t}\right) + \left(w_i, \, \rho\left(\phi^{k+1} \right) \, \tvj^{k} \, \pd{\tvi^{k}}{x_j}\right) \\ & \quad +
	\frac{1}{Pe}\left(w_i, \, \tJj^{k} \, \pd{\tvi^{k}}{x_j}\right) -
	\frac{Cn}{We} \left(\pd{w_i}{x_j}, \, {\pd{\tphi^{k}}{x_i}\pd{\tphi^{k}}{x_j}}\right)  -
	\frac{1}{We}\left(\pd{w_i}{x_i}, \, {\tp^{k}} \right) \\ & \quad + \frac{1}{Re}\left(\pd{w_i}{x_j}, \,\eta\left(\phi^{k+1}\right){\pd{\tvi^{k}}{x_j}} \right) -
	\left(w_i,\frac{\rho\left(\phi^{k+1}\right)\hat{g_i}}{Fr}\right) = 0, 
	\label{eqn:nav_stokes_var_semi_disc}
	\end{split} \\ 
	\text{Thermo Consistency:} & \quad \tJi^{k} = \frac{\left(\rho_- - \rho_+ \right)}{2} \, \pd{\tmu^{k}}{x_i}, \label{eqn:thermo_consistency_semi_disc} \\
	\text{Solenoidality:} & \quad \left(q, \, \pd{\tvi^{k}}{x_i}\right) = 0, \label{eqn:cont_var_semi_disc} \\
	\text{Chemical Potential:} & \quad 
	-\left(q,\tmu^{k}\right) + \left(q, \tpsi' \right) + Cn^2 \left(\pd{q}{x_i}, \, {\pd{\tphi^{k}}{x_i}}\right)   = 0, \label{eqn:mu_eqn_var_semi_disc}\\
	\text{Cahn-Hilliard Eqn:} & \quad \left(q, \frac{\phi^{k+1} - \phi^k}{\delta t} \right) -
	 \left(\pd{q}{x_i}, \, \tvi^{k} \tphi^{k} \right) + \frac{1}{PeCn} \left(\pd{q}{x_i}, \, {\pd{\tmu^{k}}{x_i}} \right) = 0,
	\label{eqn:phi_eqn_var_semi_disc}
	\end{align}
	$\forall \vec{w} \in \vec{H}^1_0(\Omega)$, $\forall q \in H^1(\Omega)$, given $\vec{v}^{k} \in \vec{H}_0^1(\Omega)$, and $\phi^{k},\mu^{k} \in H^1(\Omega)$.
	\label{def:variational_form_sem_disc}
\end{definition}

We solve the cG(1) approximated version of variational problem \cref{eqn:nav_stokes_var_semi_disc} -- \cref{eqn:phi_eqn_var_semi_disc} using a block iteration technique, i.e., we treat the Navier-Stokes equations and the Cahn-Hilliard equations as two distinct sub-problems. Thus, two non-linear solvers are stacked together inside the time loop. These non-linear solvers are solved self-consistently until the change (error between current non-linear solve and previous non-linear solve) in the respective solutions is less than a set tolerance within every time step.  See \cref{fig:flowchart_block} for a  flowchart of the approach. We emphasize that a block iterative approach allows us to make the coupling variables from one equation constant in the other during each respective non-linear solve.  For example, for the momentum equation, all the terms depending on $\phi$ (which is solved in the Cahn-Hilliard sub problem) are known. Similarly the mixture velocity used in the Cahn-Hilliard equation solve is known.  

\begin{remark} While $\phi \in [-1, 1]$ in the original equations, there is a possibility of excursions of $\phi$ outside these bounds due to numerical errors. While this does not adversely affect the $\phi$ evolution (i.e. the CH equation), it may cause non-positivity of the mixture density $\rho(\phi)$ and the mixture viscosity $\eta(\phi)$, which directly depend on $\phi$. This causes drift of the bulk phase density from the true specific density of that phase, with some locations exhibiting negative density (or viscosity). This effect is especially possible for very high density ratio between the two fluids, like in the case of a water-air system ($1:10^{-3}$). A simple fix for this issue is by saturation scaling, i.e., pulling back the value of $\phi$ only for the calculation of density and viscosity. We therefore define $\phi^*$ that is only used for the calculation of mixture density and viscosity, where $\phi^*$ is given by: 

\begin{equation}
\phi^* := 
\begin{cases}
\phi, &\quad \text{if} \;\; \abs{\phi} \leq 1, \\
\mathrm{sign}(\phi), &\quad \text{otherwise.} 
\end{cases}
\label{eqn:phi_for_density}
\end{equation} 

\end{remark}

\begin{figure}
\centering			
			\begin{tikzpicture}[%
			scale=1,transform shape,
			>=latex,              
			start chain=going right,    
			node distance=25mm and 25mm, 
			every join/.style={norm},   
			]
			\scriptsize
			
			\tikzset{
				start/.style={rectangle, draw, text width=3cm, text centered, rounded corners, minimum height=2ex},
				base/.style={draw, on chain, on grid, align=center, minimum height=2ex},
				proc/.style={base, rectangle, inner sep=4pt}, 
				inout/.style={base,trapezium,trapezium left angle=70,trapezium right 
					angle=-70}, 
				decision/.style={base, diamond, aspect = 4.0}, 
				norm/.style={->, draw},
			}
			
			\node [start,text width=10em] (b0) {Known fields at some timestep $k$:  $\vec{v}^{k}$, $p^{k}$, $\phi^{k}$, $\mu^{k}$};
			\node [proc,below =of b0,join,text width=30em] (b1) {First iteration Navier-Stokes: $\text{block}_{\text{iter}} = 0$ \\ Solve Navier-Stokes and update the fields: \\ $\vec{v}^{k+1(0)} \leftarrow \vec{v}^k$, \quad $p^{k+1(0)} \leftarrow p^k$};
			\node [proc,below =of b1,yshift=0pt,xshift=0pt,text width=30em] (b2) {First iteration Cahn-Hilliard: $\text{block}_{\text{iter}} = 0$ \\ Solve Cahn-Hilliard and update the fields: \\
			 $\phi^{k+1(0)} \leftarrow \phi^k$, \quad $\mu^{k+1(0)} \leftarrow \mu^k$};
			\node [proc,below =of b2,yshift=0pt,xshift=0pt,text width=10em] (b3) {$\text{block}_{\text{iter}} = \text{block}_{\text{iter}} + 1$};
			\node [proc,below =of b3,yshift=0pt,xshift=0pt,text width=30em] (b4) {$\ell^\text{th}$ block iteration Navier-Stokes: $\ell=\text{block}_{\text{iter}}$ \\ Solve Navier-Stokes and update the fields$:\vec{v}^{k+1(\ell)} \leftarrow \vec{v}^{k+1(\ell-1)}$, \quad $p^{k+1(\ell)} \leftarrow p^{k+1(\ell-1)}$};
			\node [proc,below =of b4,yshift=0pt,xshift=0pt,text width=30em] (b5) {$\ell^\text{th}$ block Cahn-Hilliard: $\ell = \text{block}_{\text{iter}}$ \\ Solve Cahn-Hilliard and update the fields$:\phi^{k+1(\ell)} \leftarrow \phi^{k+1(\ell-1)}$, \quad $\mu^{k+1 (\ell)} \leftarrow \mu^{k+1 (\ell-1)}$};
			\node[decision,below =of b5,yshift=0pt,xshift=0pt,text width=20em](b6) {if $\text{block}_{\text{iter}}>1$ and $\max\norm{\vec{u}^{k+1(\ell)}- \vec{u}^{k+1(\ell-1)}} > \text{block}_\text{tol}$, where $\vec{u}^{k+1}$ is a vector containing $\vec{v}, p, \phi, \mu$};
			\node [start,below =of b6,yshift=0pt,xshift=0pt,text width=16em] (b7) {Solution at current timestep $k+1$:  $\vec{v}^{k+1} = \vec{v}^{k+1(\ell)}$, $p^{k+1} = p^{k+1(\ell)}$, $\phi^{k+1} = \phi^{k+1(\ell)}$, $\mu^{k+1} = \mu^{k+1(\ell)}$};

			\draw [->] (b0)--(b1);		
			\draw [->] (b1)--(b2);
			\draw [->] (b2)--(b3);
			\draw [->] (b3)--(b4);
			\draw [->] (b4)--(b5);
			\draw [->] (b5) -- (b6);
			\draw [->] (b6.west) -| ++(-1.5,0) node[anchor=east] {YES} |- (b3.west);
			\draw [->] (b6) -- node[anchor=east] {NO}(b7);
			
			\end{tikzpicture}
\caption{Flowchart for the block iteration technique as described
in \cref{subsection:fully_discrete_scheme}
.}
\label{fig:flowchart_block}		
\end{figure}
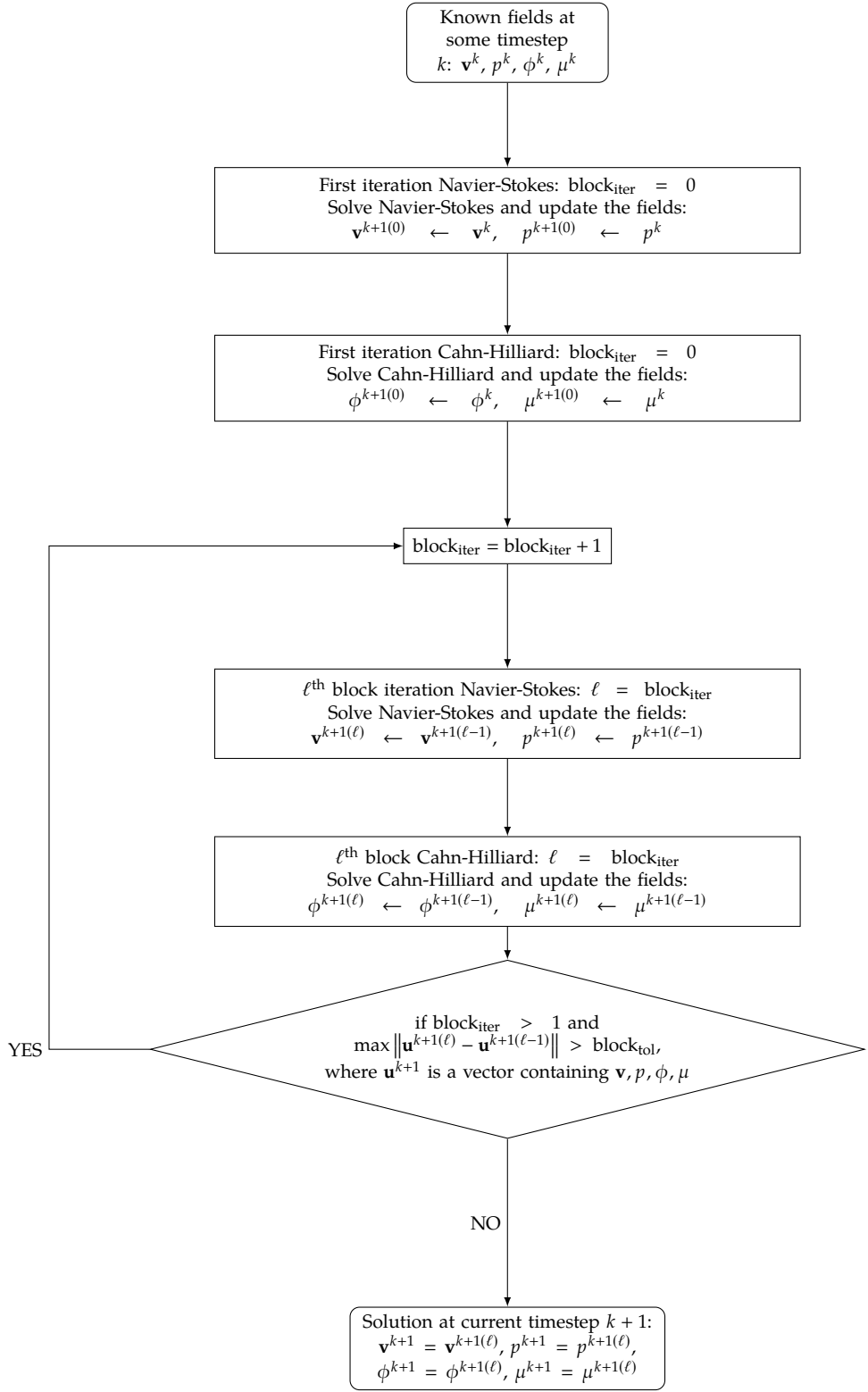

\begin{remark}
It is important to note here that we are using the block iteration technique.  Therefore, $\phi$ and $\mu$ are known when solving momentum equations and $v_i$ is known when solving the advective Cahn-Hilliard equation. The theorems and proofs we present in the subsequent
subsections all assume that we are using the block iterative technique. However, it is not difficult to extend these theorems and proofs for the case of a fully coupled implementation; the theorems of unconditional stability and existence presented here hold even in the fully coupled case. 
\end{remark}

\subsection{Energy stability of the time-stepping scheme}
\label{subsec:time_scheme}
In this subsection we give a rigorous proof of the energy-stability of the
time-stepping scheme as described above. We begin with a result
about mass conservation.

\begin{proposition}[Mass conservation]
	The scheme given by \cref{eqn:nav_stokes_var_semi_disc} -- \cref{eqn:phi_eqn_var_semi_disc} with the following boundary conditions: 
	
	\begin{equation}
	\pd{\tmu}{x_i} \hat{n}_i\Bigl\rvert_{\partial \Omega} = 0, \quad \pd{\tphi}{x_i} \hat{n}_i\Bigl\rvert_{\partial \Omega} = 0, \quad \widetilde{\vec{v}}^{k}\Bigl\rvert_{\partial \Omega} = \vec{0},
	\end{equation}
	where $\hat{\vec{n}}$ is the outward pointing normal to the boundary $\partial \Omega$,
	is globally mass conservative: 
	
	\begin{equation}
	\int_{\Omega} \phi^{k+1} \, \mathrm{d}\vec{x}  = \int_{\Omega} \phi^{k} \, \mathrm{d}\vec{x}.
	\end{equation}
	\label{prop:mass_conservation}
\end{proposition}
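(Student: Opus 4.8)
The plan is to exploit the weak form of the Cahn-Hilliard equation, \cref{eqn:phi_eqn_var_semi_disc}, together with the freedom to choose the test function $q$. The variational problem of \cref{def:variational_form_sem_disc} asks that the identity hold for \emph{all} $q \in H^1(\Omega)$, and since $\Omega$ is bounded, the constant function $q \equiv 1$ is an admissible test function. Testing against this particular constant is essentially the only idea required.

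First I would substitute $q \equiv 1$ into \cref{eqn:phi_eqn_var_semi_disc}. Every spatial derivative $\pd{q}{x_i}$ then vanishes identically, so both the advective term $\left(\pd{q}{x_i}, \, \tvi^{k}\tphi^{k}\right)$ and the diffusive term $\frac{1}{PeCn}\left(\pd{q}{x_i}, \, \pd{\tmu^{k}}{x_i}\right)$ drop out. What survives is simply
$\left(1, \, \frac{\phi^{k+1}-\phi^{k}}{\delta t}\right) = 0$, that is, $\int_{\Omega}\left(\phi^{k+1}-\phi^{k}\right)\mathrm{d}\vec{x} = 0$ after multiplying through by $\delta t > 0$, which is exactly the claimed global mass conservation.

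It is worth noting where the boundary conditions enter, since they are what make the weak form itself well-posed for this test. In passing from the strong form \cref{eqn:disc_time_phi_eqn_semi} to the weak form \cref{eqn:phi_eqn_var_semi_disc}, integration by parts on the advective and diffusive terms generates boundary integrals $\int_{\partial\Omega} q \, \tvi^{k}\tphi^{k}\,\hat{n}_i$ and $\frac{1}{PeCn}\int_{\partial\Omega} q \, \pd{\tmu^{k}}{x_i}\,\hat{n}_i$. These have already been discarded using the prescribed conditions $\tvecv^{k} = \vec{0}$ and $\pd{\tmu}{x_i}\,\hat{n}_i = 0$ on $\partial\Omega$. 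Consequently, testing with a constant recovers precisely the divergence-theorem cancellation, and no surface flux remains to spoil the conservation statement.

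There is no substantive obstacle here: the result is an immediate structural consequence of the weak formulation. The only two points deserving care are (i) confirming that $q \equiv 1 \in H^1(\Omega)$, which holds because $\Omega$ is bounded so that constants have finite $L^2$ norm and vanishing gradient; and (ii) verifying that the boundary terms implicit in \cref{eqn:phi_eqn_var_semi_disc} genuinely vanish under the stated boundary conditions, so that testing against a constant yields the integral of the discrete time derivative alone, with no leftover contribution from $\partial\Omega$.
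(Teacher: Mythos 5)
Your proposal is correct and follows exactly the approach the paper indicates (the paper omits the details but states that the proof consists of taking the test function $q \equiv 1 \in H^1(\Omega)$ in \cref{eqn:phi_eqn_var_semi_disc} and showing the integral of the discrete time derivative vanishes). Your additional remarks on the admissibility of constants and on where the boundary conditions were already used in forming the weak problem are accurate and fill in precisely the steps the paper leaves to the reader.
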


 This is a well known result shown previously in literature \citep{Guo2017,Feng2006}.  The proof involves selecting the test function as 1.0 $\in H^1(\Omega)$ in the variational form of  \cref{eqn:nav_stokes_var_semi_disc} -- \cref{eqn:phi_eqn_var_semi_disc} (see \cref{def:variational_form_sem_disc}) and proving the integral of the time derivative to be zero. Since this is a well-known result,
we do not provide the proof here. We verify the claim numerically in the results section for a wide variety of canonical problems.

\begin{lemma}[Weak equivalence of forcing]
	The forcing term due to Cahn-Hilliard in the momentum equation, 
\cref{eqn:nav_stokes_var_semi_disc}, with the test function $w_i = \delta t \, \tvi^k$, can be written equivalently as
	\begin{equation}
	\frac{Cn}{We}\left( \pd{}{x_j}\left({\pd{\tphi^k}{x_i}\pd{\tphi^{k}}{x_j}}\right), \delta t \, \tvi^k\right) = \frac{\delta t}{WeCn}\left({\tphi}^k\pd{\tmu^{k}}{x_i},\tvi^k\right),
	\label{eqn:lemma_weak_equiv_forcing}
	\end{equation}
	$\forall \;\; \tphi^k$, $\tmu^{k} \in H^1(\Omega)$, and $\forall \;\; \widetilde{\vec{v}}^k \in  \vec{H}_{0}^1(\Omega)$, where $\vec{v}^k, \vec{v}^{k+1}, p^k, p^{k+1}, \phi^k, \phi^{k+1}, \mu^{k},\mu^{k+1}, $ satisfy \cref{eqn:nav_stokes_var_semi_disc} -- \cref{eqn:phi_eqn_var_semi_disc}.
	\label{lem:forcing_equivalent}
\end{lemma}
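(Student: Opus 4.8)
The plan is to reduce the left-hand side to the right-hand side by calculus identities alone, combined with the chemical-potential relation \cref{eqn:disc_mu_eqn_semi} and the two structural properties of the test velocity $\tvi^k$: it vanishes on $\partial\Omega$ and it is solenoidal, $\pd{\tvi^k}{x_i}=0$. The engine of the whole argument is the observation that any pure-gradient term integrates to zero against $\tvi^k$, since
\begin{equation}
\left(\pd{f}{x_i}, \tvi^k\right) = -\left(f, \pd{\tvi^k}{x_i}\right) + \int_{\partial\Omega} f\,\tvi^k\,\hat{n}_i\,\mathrm{d}S = 0,
\end{equation}
the boundary term dropping because $\tvi^k$ vanishes on $\partial\Omega$ and the volume term dropping by solenoidality. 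This removes every contribution except the one we want.

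First I would expand the divergence on the left using the product rule,
\begin{equation}
\pd{}{x_j}\left(\pd{\tphi^k}{x_i}\pd{\tphi^k}{x_j}\right) = \frac{1}{2}\pd{}{x_i}\left(\pd{\tphi^k}{x_j}\pd{\tphi^k}{x_j}\right) + \pd{\tphi^k}{x_i}\,\pd{}{x_j}\left(\pd{\tphi^k}{x_j}\right),
\end{equation}
so that the first piece is already a gradient. Next I would substitute the chemical-potential relation in the form $Cn^2\,\pd{}{x_j}\!\left(\pd{\tphi^k}{x_j}\right) = \tpsi' - \tmu^k$ into the Laplacian factor, and use the chain rule $\tpsi'\,\pd{\tphi^k}{x_i} = \pd{\tpsi}{x_i}$ to express the whole forcing, after multiplication by $Cn/We$, as a sum of two gradients, $\frac{Cn}{2We}\pd{}{x_i}\norm{\nabla\tphi^k}^2 + \frac{1}{CnWe}\pd{\tpsi}{x_i}$, minus the single term $\frac{1}{CnWe}\tmu^k\pd{\tphi^k}{x_i}$.

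Now I would test against $\delta t\,\tvi^k$. The two gradient terms vanish by the observation above, leaving $-\frac{\delta t}{CnWe}\left(\tmu^k\pd{\tphi^k}{x_i},\tvi^k\right)$. A final integration by parts, writing $\tmu^k\pd{\tphi^k}{x_i} = \pd{}{x_i}\!\left(\tmu^k\tphi^k\right) - \tphi^k\pd{\tmu^k}{x_i}$ and discarding the gradient $\pd{}{x_i}\!\left(\tmu^k\tphi^k\right)$ for the same reason, converts this into $+\frac{\delta t}{CnWe}\left(\tphi^k\pd{\tmu^k}{x_i},\tvi^k\right)$, which is exactly the claimed right-hand side since $CnWe = WeCn$.

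The hard part will not be the algebra but the justification of the integrations by parts: one must check that the scalar fields acting as multipliers against the divergence constraint (namely $\norm{\nabla\tphi^k}^2$, $\tpsi$, and $\tmu^k\tphi^k$) are regular enough for the argument. At the semi-discrete level I would invoke the pointwise solenoidality \cref{eqn:disc_cont_semi}, which makes the volume term $\left(f,\pd{\tvi^k}{x_i}\right)$ vanish directly; in the variational setting of \cref{eqn:nav_stokes_var_semi_disc} -- \cref{eqn:phi_eqn_var_semi_disc} one instead reads the weak constraint \cref{eqn:cont_var_semi_disc} with test function $q=f$, which requires $f\in H^1(\Omega)$ and hence a mild regularity assumption on $\tphi^k$ and $\tmu^k$. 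This is the only step that demands care beyond routine manipulation.
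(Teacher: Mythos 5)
Your proposal is correct and follows essentially the same route as the paper's proof: product-rule expansion of the divergence, substitution of the chemical-potential relation for the Laplacian, the chain rule $\tpsi'\,\partial_i\tphi^k = \partial_i\tpsi$, and elimination of all pure-gradient terms by integration by parts against the solenoidal, boundary-vanishing test velocity. The only cosmetic difference is that the paper performs the final product-rule split $-\tmu^k\partial_i\tphi^k = \tphi^k\partial_i\tmu^k - \partial_i(\tmu^k\tphi^k)$ pointwise before testing, whereas you test first and then integrate by parts; the content is identical.
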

\begin{proof}
	To prove this we will manipulate $\pd{}{x_j}\left({\pd{\tphi^{k}}{x_i}\pd{\tphi^{k}}{x_j}}\right)$ using vector calculus. Using the product rule we have:
	\begin{equation}
	\begin{split}
	\pd{}{x_j}\left({\pd{\tphi^{k}}{x_i}\pd{\tphi^{k}}{x_j}}\right) &= \pd{\tphi^{k}}{x_i}\left(\pd{}{x_j}\left({\pd{\tphi^{k}}{x_j}}\right)\right) + \pd{\tphi^{k}}{x_j}\left(\pd{}{x_j}\left({\pd{\tphi^{k}}{x_i}}\right)\right) \\
	& =\pd{\tphi^{k}}{x_i}\left(\pd{}{x_j}\left({\pd{\tphi^{k}}{x_j}}\right)\right) +\frac{1}{2}\pd{}{x_i}\left(  \pd{\tphi^{k}}{x_j} \pd{\tphi^{k}}{x_j} \right),
	\label{eqn:forcing_expan}
	\end{split}
	\end{equation}
	where the second equality follows from proposition \cref{prop:forcing_aside} in \ref{sec:appendix1}.
	We manipulate this expression to write it in terms of $\mu$:
	\begin{equation}
	\begin{split}
	\pd{}{x_j}\left({\pd{\tphi^{k}}{x_i}\pd{\tphi^{k}}{x_j}}\right) &= \pd{\tphi^{k}}{x_i}\left(\pd{}{x_j}\left({\pd{\tphi^{k}}{x_j}}\right)\right) +\frac{1}{2}\pd{}{x_i}\left(  \pd{\tphi^{k}}{x_j} \pd{\tphi^{k}}{x_j} \right)  + \frac{1}{Cn^2} \, \tpsi' \, \pd{\tphi^{k}}{x_i} - \frac{1}{Cn^2} \, \tpsi' \, \pd{\tphi^{k}}{x_i},\\ 
	 &= \pd{\tphi^{k}}{x_i}\left(\pd{}{x_j}\left({\pd{\tphi^{k}}{x_j}}\right) -  \frac{1}{Cn^2} \, \tpsi' \,  \right) +\frac{1}{2}\pd{}{x_i}\left(  \pd{\tphi^{k}}{x_j} \pd{\tphi^{k}}{x_j} \right)  + \frac{1}{Cn^2}  \, \tpsi' \,  \pd{\tphi^{k}}{x_i}.
	\label{eqn:forcing_expan_f}
	\end{split}
	\end{equation}
	 The expression in the parenthesis in the first term can be replaced
	using the chemical potential equation \eqref{eqn:disc_mu_eqn_semi}, which leads to:
	\begin{equation}
	\pd{}{x_j}\left({\pd{\tphi^{k}}{x_i}\pd{\tphi^{k}}{x_j}}\right) = -\pd{\tphi^{k}}{x_i}\frac{\tmu^k}{Cn^2} +\frac{1}{2}\pd{}{x_i}\left(  \pd{\tphi^{k}}{x_j} \pd{\tphi^{k}}{x_j} \right)  + \frac{1}{Cn^2} \, \tpsi' \, \pd{\tphi^{k}}{x_i}. 
	\label{eqn:forcing_expan_mu}
	\end{equation}
	Using the product and chain rules we obtain:
	\begin{equation}
	\begin{split}
	\pd{}{x_j}\left({\pd{\tphi^{k}}{x_i}\pd{\tphi^{k}}{x_j}}\right) &= 
	  \frac{\tphi^k}{Cn^2} \pd{\tmu^{k}}{x_i} - \frac{1}{Cn^2}\pd{\left(\tmu^{k}\tphi^{k}\right)}{x_i} +\frac{1}{2}\pd{}{x_i}\left(  \pd{\tphi^{k}}{x_j} \pd{\tphi^{k}}{x_j} \right)  + \frac{1}{Cn^2}\pd{\widetilde{\psi}}{x_i} \\
	&= \frac{\tphi^k}{Cn^2} \pd{\tmu^{k}}{x_i} - \pd{}{x_i}\left(\frac{\tmu^{k} \, \tphi^{k}}{Cn^2}  -
	 \frac{1}{2} \pd{\tphi^{k}}{x_j} \pd{\tphi^{k}}{x_j} - \frac{\tpsi}{Cn^2}\right).
	\label{eqn:forcing_expan_mu_int_parts}
	\end{split}
	\end{equation}
Next we substitute this simplification into the appropriate inner product term in \cref{eqn:phi_eqn_var_semi_disc}:
	\begin{equation}
	\begin{split}
	&\left( \pd{}{x_j}\left({\pd{\tphi^{k}}{x_i}\pd{\tphi^{k}}{x_j}}\right), \delta t \, \tvi^k\right) = 
	\delta t \left(\frac{\tphi^k}{Cn^2} \pd{\tmu^{k}}{x_i} - \pd{}{x_i}\left( \frac{\tmu^{k} \, \tphi^{k}}{Cn^2}  -
	 \frac{1}{2} \pd{\tphi^{k}}{x_j} \pd{\tphi^{k}}{x_j} - \frac{\tpsi}{Cn^2} \right) , \tvi^k\right) \\
	& \qquad \qquad  = 
	\delta t \left(\frac{\tphi^k}{Cn^2} \pd{\tmu^{k}}{x_i},\tvi^k \right) - \delta t \left(\pd{}{x_i}\left( \frac{\tmu^{k} \, \tphi^{k}}{Cn^2}  -
	 \frac{1}{2} \pd{\tphi^{k}}{x_j} \pd{\tphi^{k}}{x_j} - \frac{\tpsi}{Cn^2} \right) , \tvi^k\right) \\
	& \qquad \qquad = 
	\frac{\delta t}{Cn^2} \left({\tphi^k} \pd{\tmu^{k}}{x_i}, \tvi^k\right) + \delta t \left(
	\frac{\tmu^{k} \, \tphi^{k}}{Cn^2}  -
	 \frac{1}{2} \pd{\tphi^{k}}{x_j} \pd{\tphi^{k}}{x_j} - \frac{\tpsi}{Cn^2} , \, \pd{\tvi^k}{x_i}\right).
	\label{eqn:equilvalent_forcing} 
	\end{split}   	       	
	\end{equation}
	The last term vanishes due to the solenoidality of the velocity field, \cref{eqn:disc_time_phi_eqn_semi}; and therefore,
	after multiplying by $Cn/We$, we achieve the desired result.
\end{proof}

\begin{corollary}[Strong equivalence of forcing]
	If we have the following equivalence in the weak sense:
	\begin{equation}
	\frac{Cn}{We}\left( \pd{}{x_j}\left({\pd{\tphi^k}{x_i}\pd{\tphi^{k}}{x_j}}\right), \delta t  \, \tvi^k\right) = \frac{\delta t}{WeCn}\left(\tphi^k \pd{\tmu^{k}}{x_i},\tvi^k\right),
	\end{equation}
	$\forall \;\; \tphi^k$, $\tmu^{k} \in  H^1(\Omega)$, and $\forall \;\; \widetilde{\vec{v}}^k \in  \vec{H}_{0}^1(\Omega)$, where $\widetilde{\vec{v}}^k, \widetilde{\vec{v}}^{k+1}, p^k, p^{k+1}, \phi^k, \phi^{k+1}, \mu^{k},\mu^{k+1}$ satisfy \cref{eqn:nav_stokes_var_semi_disc} -- \cref{eqn:phi_eqn_var_semi_disc}, then 
	the following equivalence also holds in the strong sense:
	\begin{equation}
	\frac{Cn}{We}\pd{}{x_j}\left({\pd{\tphi^k}{x_i}\pd{\tphi^{k}}{x_j}}\right) = \frac{1}{WeCn}\tphi^k\pd{\tmu^{k}}{x_i},
	\end{equation} 
	if $\tphi^k$, $\tmu^{k} \in H^1(\Omega) \bigcap C^{\infty}_{c}(\Omega)$, and $\widetilde{\vec{v}}^k \in \vec{H}_{0}^1(\Omega)\bigcap \vec{C}^{\infty}_{c}(\Omega)$.
\end{corollary}

There are numerous papers in the literature which uses the $\tphi^k\pd{\tmu^{k}}{x_i}$ as the surface tension forcing \citep{Chen2016,Han2015,Jacqmin2000,Yue2010}.  The above corollary shows that the form of stress used in the CHNS model presented in this system is equivalent to the more popular $\tphi^k\pd{\tmu^{k}}{x_i}$ in the weak sense.

\begin{remark}
	The advection term in \cref{eqn:nav_stokes} can be defined in the skew-symmetric form as (see lemma 6.10 of section 6.1.2 of \citep{Volker2016} for details):
	\begin{equation}
	B(v_i, v_j) \coloneqq v_j \pd{v_i}{x_j} + \frac{1}{2}v_i \pd{v_j}{x_j}.
	\end{equation} 
	Using the solenoidality of the mixture velocity \cref{eqn:cont} we have that
	\begin{align}
	B_1(v_i, v_j) = \rho v_j \pd{v_i}{x_j} \qquad \text{and} \qquad B_2(v_i, v_j) = J_j\pd{v_i}{x_j}.
	\end{align}
	The skew symmetric form induces a trilinear form when weakened; for three general vectors $u_i$, $v_i$, $w_i$ $\in$ $\vec{H}_{0}^1(\Omega)$ we have\footnote{Here the subscript 0 for the Sobolev space $\vec{H}_{0}^1(\Omega)$ represents zero velocities on the boundary in the trace sense.}:
	\begin{align}
	b_1(u_i, v_j, w_i) &= \Bigl(B_1(v_i, v_j),w_i \Bigr) = \frac{1}{2}  \left(v_j \pd{v_i}{x_j}, w_i\right) - \frac{1}{2} \left(v_j \pd{w_i}{x_j}, v_i\right) , \\
	b_2(u_i, J_j, w_i) &= \Bigl(B_1(v_i, J_j),w_i\Bigr) = \frac{1}{2}  \left(J_j \pd{v_i}{x_j}, w_i\right) - \frac{1}{2}\left(J_j \pd{w_i}{x_j}, v_i\right) .
	\end{align}
Then for our case in the momentum equations, consider the situation where we have $\vec{J}, \vec{v} \in \vec{H}_{0}^1(\Omega)$, and we are working towards energy estimates, which entails taking an inner product of momentum equation with $\vec{v}$ to get an energy functional (to obtain the second order moment). In which case we have for both the non-linear terms in momentum equations: 
\begin{align}
	b_1(v_i, v_j, v_i) &= \Bigl(B_1(v_i, v_j),v_i\Bigr) = \frac{1}{2}  \left(\rho v_j \pd{v_i}{x_j}, v_i\right) - \frac{1}{2}\left(\rho v_j \pd{v_i}{x_j}, v_i\right)  = 0,\label{eqn:trilinear_zero_vel}\\
	b_2(v_i, J_j, v_i) &= \Bigl(B_2(v_i, J_j),v_i\Bigr) = \frac{1}{2}  \left(J_j \pd{v_i}{x_j}, v_i\right) - \frac{1}{2}\left(J_j \pd{v_i}{x_j}, v_i\right) = 0\label{eqn:trilinear_zero_massflux}.
\end{align}
This makes physical sense from the point-of-view of energy balance, since the aforementioned non-linear terms do not act as sinks or sources; instead, they provide the mechanism for redistribution of energy to various length scales. 
\label{rem:skew-symmetry}
\end{remark}

\begin{lemma} 
The variational advection term from the Cahn-Hilliard contribution in the momentum equation, 
\cref{eqn:nav_stokes_var_semi_disc}, can be written as follows:  
	\begin{equation}
	\begin{split}
	\frac{\delta t}{WeCn}\left(\tphi^k\tvi^k,\pd{\tmu^{k}}{x_i}\right) & = 
	\frac{1}{2}\int_{\Omega}\left( \rho\left(\phi^{k+1}\right)\norm{{\vec{v}}^{k+1}}^2 - \rho\left(\phi^{k}\right)\norm{\vec{v}^{k}}^2 \right)  \, d\vec{x}  
	- \frac{ \delta t}{Re} \norm{\sqrt{\eta\left(\phi^{k+1}\right)}\nabla \widetilde{\vec{v}}^k}_{L^2}^2 \\ 
	& \quad - \frac{1}{Fr}\left(y,  \,  \rho\left(\phi^{k+1}\right) - \rho\left(\phi^{k}\right)  \right),
	\end{split}
	\label{eqn:cahn_hilliard_term_lemma}
	\end{equation}
	$\forall \;\; \tphi^k$, $\phi^{k+1}$, $\tmu^{k} \in  H^1(\Omega)$, and $\forall \;\; {\vec{v}}^k, {\vec{v}}^{k+1} \in \vec{H}_{0}^1(\Omega)$, where  ${\vec{v}}^k, {\vec{v}}^{k+1}, p^k, p^{k+1}, \phi^k, \phi^{k+1}, \mu^{k},\mu^{k+1}$ satisfy \cref{eqn:nav_stokes_var_semi_disc} -- \cref{eqn:phi_eqn_var_semi_disc},
	and
	\begin{equation}
	\begin{split}
	    \norm{\vec{v}}^2 := \sum_i \abs{v_i}^2, \\
	    \qquad \norm{\sqrt{\eta\left(\phi^{k+1}\right)}\nabla \widetilde{\vec{v}}^k}_{L^2}^2 &:= 
	\int_{\Omega} \sqrt{\eta\left(\phi^{k+1}\right)} \sum_i \sum_j \abs{\frac{\partial \tvi^k}{\partial x_j}}^2 \, d\vec{x} = 
	\int_{\Omega} \sqrt{\eta\left(\phi^{k+1}\right)}\norm{\nabla\vec{v}}^2_F \, d\vec{x}.
	\end{split}
	\end{equation}
	\label{lem:advection_estimate}
\end{lemma}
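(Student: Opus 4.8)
The plan is to read the claimed identity as the kinetic-energy estimate that comes out of testing the momentum variational equation \cref{eqn:nav_stokes_var_semi_disc} against the particular test function $\vec{w}=\delta t\,\widetilde{\vec{v}}^k$. This is admissible because $\vec{v}^k,\vec{v}^{k+1}\in\vec{H}_0^1(\Omega)$ forces $\widetilde{\vec{v}}^k\in\vec{H}_0^1(\Omega)$. With this choice the Cahn--Hilliard (surface-tension) forcing term in \cref{eqn:nav_stokes_var_semi_disc} is exactly $-\frac{Cn}{We}\bigl(\pd{(\delta t\,\tvi^k)}{x_j},\,\pd{\tphi^k}{x_i}\pd{\tphi^k}{x_j}\bigr)$, which by \cref{lem:forcing_equivalent} equals the left-hand side $\frac{\delta t}{WeCn}\bigl(\tphi^k\tvi^k,\pd{\tmu^k}{x_i}\bigr)$ of the statement. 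The whole proof then amounts to moving this forcing term to one side of \cref{eqn:nav_stokes_var_semi_disc} and matching the remaining terms, one by one, to the three contributions on the right.

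First I would dispatch the easy terms. The pressure term $-\frac{\delta t}{We}\bigl(\pd{\tvi^k}{x_i},\tp^k\bigr)$ vanishes because $\widetilde{\vec{v}}^k$ is discretely solenoidal (take $q=\tp^k$ in \cref{eqn:cont_var_semi_disc}, or use \cref{eqn:disc_cont_semi}). The viscous term $\frac{\delta t}{Re}\bigl(\pd{\tvi^k}{x_j},\eta(\phi^{k+1})\pd{\tvi^k}{x_j}\bigr)$ is already the dissipation $\frac{\delta t}{Re}\norm{\sqrt{\eta(\phi^{k+1})}\nabla\widetilde{\vec{v}}^k}_{L^2}^2$. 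The discrete time-derivative term simplifies, via $\tvi^k(v_i^{k+1}-v_i^k)=\tfrac12\bigl((v_i^{k+1})^2-(v_i^k)^2\bigr)$, to $\tfrac12\int_\Omega\rho(\phi^{k+1})\bigl(\norm{\vec{v}^{k+1}}^2-\norm{\vec{v}^k}^2\bigr)\,d\vec{x}$, which is the target kinetic-energy difference up to a density defect proportional to $\rho(\phi^{k+1})-\rho(\phi^k)$.

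The substantive work is the two convective terms, $\bigl(\delta t\,\tvi^k,\rho(\phi^{k+1})\tvj^k\pd{\tvi^k}{x_j}\bigr)$ and $\frac{\delta t}{Pe}\bigl(\tvi^k,\tJj^k\pd{\tvi^k}{x_j}\bigr)$, together with the gravity term $-\frac{\delta t}{Fr}\bigl(\tvi^k,\rho(\phi^{k+1})\hat g_i\bigr)$. For the convective pair I would use $\tvi^k\pd{\tvi^k}{x_j}=\tfrac12\pd{}{x_j}\norm{\widetilde{\vec{v}}^k}^2$, integrate by parts (the boundary integral dies because $\widetilde{\vec{v}}^k=\vec{0}$ on $\partial\Omega$), and then substitute the discrete continuity equation \cref{eqn:disc_consv_semi} for the flux divergence $\pd{}{x_i}(\rho(\tphi^k)\tvi^k)+\frac{1}{Pe}\pd{\tJi^k}{x_i}=-\tfrac{1}{\delta t}(\rho(\phi^{k+1})-\rho(\phi^k))$. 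The key algebraic fact that makes the densities line up is that $\rho$ is affine, so $\rho(\tphi^k)=\tfrac12(\rho(\phi^{k+1})+\rho(\phi^k))$, which converts the mismatch between the $\rho(\phi^{k+1})$ of the momentum convection and the $\rho(\tphi^k)$ of continuity into explicit $(\rho(\phi^{k+1})-\rho(\phi^k))$ defects. The gravity term I would treat in reverse: rewrite the target $-\frac{1}{Fr}(y,\rho(\phi^{k+1})-\rho(\phi^k))$ by inserting \cref{eqn:disc_consv_semi} for $\rho(\phi^{k+1})-\rho(\phi^k)$ and integrating by parts, using $\pd{y}{x_i}=\delta_{i2}$ so that only the vertical velocity and vertical flux survive.

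The main obstacle, and the only place real care is needed, is proving that every density-defect term produced by the three manipulations above --- the defect from the time-derivative term, those from the two convective terms, and those from the gravity/continuity substitution --- cancels exactly, leaving only the clean kinetic- and potential-energy differences with no residual. This cancellation is \emph{not} the skew-symmetry vanishing of \cref{rem:skew-symmetry}, since the scheme is written in convective rather than skew-symmetric form; it relies specifically on the compatibility of the time discretizations of the momentum and continuity equations, on the affineness of $\rho$, and on repeatedly invoking discrete solenoidality $\pd{\tvi^k}{x_i}=0$ to kill divergence-of-velocity contributions after each integration by parts. I would organize the bookkeeping by collecting all terms carrying the common factor $\rho(\phi^{k+1})-\rho(\phi^k)$ and checking that their coefficients sum to zero.
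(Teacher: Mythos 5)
Your skeleton matches the paper's up to one decisive point: both arguments test \cref{eqn:nav_stokes_var_semi_disc} with $\delta t\,\tvi^k$, kill the pressure term by solenoidality, read off the viscous dissipation directly, invoke \cref{lem:forcing_equivalent} for the surface-tension term, and convert the gravity term via $\hat{g}_i=-\pd{y}{x_i}$ together with the continuity equation (the paper additionally needs its stated assumption that the boundary integral of $\tmu^{k}\hat{g}_i\hat{n}_i$ vanishes, which you omit but could easily add). Where you genuinely diverge is the convective pair. The paper declares $\bigl(\rho(\phi^{k+1})\,\tvj^{k}\pd{\tvi^{k}}{x_j},\tvi^{k}\bigr)$ and $\frac{1}{Pe}\bigl(\tJj^{k}\pd{\tvi^{k}}{x_j},\tvi^{k}\bigr)$ to be zero outright via the trilinear-form identities \cref{eqn:trilinear_zero_vel}--\cref{eqn:trilinear_zero_massflux} of \cref{rem:skew-symmetry}, and then disposes of the time-derivative density defect separately in \cref{lem:evol_estimate}. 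You instead propose that neither contribution vanishes on its own, but that all the defects proportional to $\rho(\phi^{k+1})-\rho(\phi^{k})$ cancel against one another. Your observation that the scheme is posed in convective rather than skew-symmetric form is fair, but it leaves you owing a cancellation that the paper's route never has to perform --- and that cancellation is precisely where your proposal breaks.

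Carrying out your own plan: the time-derivative term leaves the defect $-\tfrac12\int_\Omega\bigl(\rho(\phi^{k+1})-\rho(\phi^{k})\bigr)\norm{\vec{v}^{k}}^2\,\mathrm{d}\vec{x}$, while integrating the two convective terms by parts, using $\pd{\tvj^{k}}{x_j}=0$, substituting \cref{eqn:disc_consv_semi}, and using the affineness of $\rho$ (so that $\rho(\tphi^{k})=\tfrac12\bigl(\rho(\phi^{k+1})+\rho(\phi^{k})\bigr)$) leaves $+\tfrac12\int_\Omega\bigl(\rho(\phi^{k+1})-\rho(\phi^{k})\bigr)\norm{\widetilde{\vec{v}}^{k}}^2\,\mathrm{d}\vec{x}\;-\;\tfrac{\delta t}{4}\int_\Omega\tvj^{k}\,\pd{}{x_j}\bigl(\rho(\phi^{k+1})-\rho(\phi^{k})\bigr)\norm{\widetilde{\vec{v}}^{k}}^2\,\mathrm{d}\vec{x}$. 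The two leading defects carry different weights, $\norm{\vec{v}^{k}}^2$ versus $\norm{\widetilde{\vec{v}}^{k}}^2$, so they do not cancel, and the gradient term has no partner at all; the ``check that the coefficients sum to zero'' step you defer to the end therefore does not close. To complete a proof you must either adopt the paper's reading of the convective terms as skew-symmetric trilinear forms (under which both inner products vanish identically when tested with $\tvi^{k}$ and the time-derivative defect is handled as in \cref{lem:evol_estimate}), or supply a genuinely new argument for why the residual above vanishes. As written, the proposal does not establish \cref{eqn:cahn_hilliard_term_lemma}.
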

\begin{proof}
	We start by taking the $L^2$ inner product of momentum equation \eqref{eqn:nav_stokes_var_semi_disc} with $\delta t \, \tvi^k$:
	\begin{equation}
	\begin{split}
	\left(\rho\left(\phi^{k+1}\right)\frac{\left(v^{k+1}_i - v^k_i\right)}{\delta t}, \delta t \, \tvi^k\right) &+ \left(\rho\left(\phi^{k+1}\right) \, \tvj^{k} \, \pd{\tvi^{k}}{x_j}, \delta t \, \tvi^k\right) \\ &+ 
	\frac{1}{Pe}\left(\tJj^{k}\pd{\tvi^{k}}{x_j}, \delta t \, \tvi^k\right)
	+ \frac{Cn}{We}\left( \pd{}{x_j}\left({\pd{\tphi^{k}}{x_i} \, \pd{\tphi^{k}}{x_j}}\right), \delta t  \,\tvi^k\right) \\ &+ 
	\frac{1}{We}\left(\pd{\tp^{k}}{x_i}, \delta t \, \tvi^k\right) - \frac{1}{Re}\left(\pd{}{x_j}\left(\eta\left(\phi^{k+1}\right){\pd{\tvi^{k}}{x_j}}\right), \delta t \, \tvi^k\right) \\ &-
	\frac{1}{Fr}\left(\rho\left(\phi^{k+1}\right)\hat{g_i}, \delta t \, \tvi^k\right) = 0. 
	\label{eqn:disc_nav_stokes_inner_prod}
	\end{split}
	\end{equation}
	Notice that the second and third terms are in a trilinear form so from \cref{eqn:trilinear_zero_vel} and \cref{eqn:trilinear_zero_massflux} they go to zero and we have:
	\begin{align}
	\begin{split}
	& \quad \left(\rho\left(\phi^{k+1}\right)\left(v^{k+1}_i - v^k_i\right), \tvi^k\right) + 
	\frac{Cn}{We}\left( \pd{}{x_j}\left({\pd{\tphi^{k}}{x_i}\pd{\tphi^{k}}{x_j}}\right), \delta t \, \tvi^k\right) \\ & \qquad \qquad \qquad + 
	\frac{1}{We}\left(\pd{\tp^{k}}{x_i}, \delta t \, \tvi^k\right)  
	- \frac{1}{Re}\left(\pd{}{x_k}\left(\eta\left(\phi^{k+1}\right){\pd{\tvi^{k}}{x_k}}\right), \delta t \,\tvi^k\right) \\ & \qquad \qquad \qquad -
	 \frac{1}{Fr}\left(\hat{g_i}, \delta t \, \rho\left(\phi^{k+1}\right)\tvi^k\right)= 0,
	\end{split}\\  
	\begin{split}     	
	&\implies \frac{1}{2}\int_{\Omega}\left( \rho\left(\phi^{k+1}\right)\norm{{\vec{v}}^{k+1}}^2 -  \rho\left(\phi^{k}\right)\norm{\vec{v}^{k}}^2 \right)  \, d\vec{x}\\ & \qquad \qquad \qquad
	+ \frac{Cn}{We}\left( \pd{}{x_j}\left({\pd{\tphi^{k}}{x_i}\pd{\tphi^{k}}{x_j}}\right), \delta t \,\tvi^k\right) \\ & \qquad \qquad \qquad + 
	\frac{1}{We}\left(\pd{\tp^{k}}{x_i}, \delta t \, \tvi^k\right) 
	- \frac{1}{Re} \left(\pd{}{x_j}\left(\eta\left(\phi^{k+1}\right){\pd{\tvi^{k}}{x_j}}\right), \delta t \, \tvi^k\right) \\ & \qquad \qquad \qquad -
	 \frac{1}{Fr}\left(\hat{g_i}, \delta t \, \rho\left(\phi^{k+1}\right)\tvi^k\right)= 0,
	\end{split}
	\label{eqn:disc_nav_stokes_inner_prod_no_advec}
	\end{align}
	where we made use of the fact that $\tvi^k = (v_i^{k+1}+v_i^k)/2$ and~\cref{lem:evol_estimate}~from \ref{sec:appendix1}.
	We can now use solenoidality of the velocity field to get rid of the pressure term.  We can do this by weakening the pressure term:
	\begin{align}
	\begin{split}	
	& \frac{1}{2}\int_{\Omega}\left( \rho\left(\phi^{k+1}\right)\norm{{\vec{v}}^{k+1}}^2 - \rho\left(\phi^{k}\right)\norm{\vec{v}^{k}}^2 \right)  \, d\vec{x}
	+ \frac{Cn}{We}\left( \pd{}{x_j}\left({\pd{\tphi^{k}}{x_i}\pd{\tphi^{k}}{x_j}}\right), \delta t \,\tvi^k\right) \\ & \qquad  
	-\frac{\delta t}{We}\left(\tp^{k}, \pd{\tvi^k}{x_i}\right)  
	-\frac{1}{Re} \left(\pd{}{x_j}\left(\eta\left(\phi^{k+1}\right){\pd{\tvi^{k}}{x_j}}\right), \delta t \, \tvi^k\right) \\
	& \qquad   -\frac{1}{Fr}\left(\hat{g_i}, \delta t \, \rho\left(\phi^{k+1}\right)\tvi^k\right) = 0, \\
	\end{split} \\
	\displaybreak[0]
	\begin{split}
	\implies \quad  & \frac{1}{2}\int_{\Omega}\left( \rho\left(\phi^{k+1}\right)\norm{{\vec{v}}^{k+1}}^2 - \rho\left(\phi^{k}\right)\norm{\vec{v}^{k}}^2 \right)  \, d\vec{x}
	+\frac{Cn}{We}\left( \pd{}{x_j}\left({\pd{\tphi^{k}}{x_i}\pd{\tphi^{k}}{x_j}}\right), \delta t \, \tvi^k\right)  \\
	&  \qquad   -\frac{1}{Re} \left(\pd{}{x_j}\left(\eta(\phi^{k+1}){\pd{\tvi^{k}}{x_j}}\right), \delta t \, \tvi^k\right) 
	 -\frac{1}{Fr}\left(\hat{g_i}, \delta t \, \rho\left(\phi^{k+1}\right)\tvi^k\right) = 0,\label{eqn:disc_nav_stokes_inner_prod_no_pres} 
	\end{split} \\
	\begin{split}
	\implies \quad  & \frac{1}{2}\int_{\Omega} \left(\rho\left(\phi^{k+1}\right)\norm{{\vec{v}}^{k+1}}^2 - \rho\left(\phi^{k}\right)\norm{\vec{v}^{k}}^2 \right)  \, d\vec{x}
	+\frac{Cn}{We}\left( \pd{}{x_j}\left({\pd{\tphi^{k}}{x_i}\pd{\tphi^{k}}{x_j}}\right), \delta t \,\tvi^k\right) \\
	&  \qquad    + \frac{\delta t}{Re} \left(\left( \sqrt{\eta(\phi^{k+1})} {\pd{\tvi^{k}}{x_j}}\right), \left(\sqrt{\eta(\phi^{k+1})}{\pd{\tvi^{k}}{x_j}}\right)\right) 
	 -\frac{1}{Fr}\left(\hat{g_i}, \delta t \, \rho\left(\phi^{k+1}\right)\tvi^k\right) = 0,\label{eqn:disc_nav_stokes_inner_prod_no_pres_weak}
	\end{split} \\
	\begin{split}
	\implies \quad  & \frac{1}{2}\int_{\Omega}\left( \rho\left(\phi^{k+1}\right)\norm{{\vec{v}}^{k+1}}^2 - \rho\left(\phi^{k}\right)\norm{\vec{v}^{k}}^2 \right)  \, d\vec{x}
	+\frac{Cn}{We}\left( \pd{}{x_j}\left({\pd{\tphi^{k}}{x_i}\pd{\tphi^{k}}{x_j}}\right), \delta t \,\tvi^k\right) \\
	&  \qquad   + \frac{\delta t}{Re} \norm{\sqrt{\eta(\phi^{k+1})} \nabla \widetilde{\vec{v}}^k}^2_{L^2}
	 -\frac{1}{Fr}\left(\hat{g_i}, \delta t \, \rho\left(\phi^{k+1}\right)\tvi^k\right) = 0.
	\label{eqn:disc_nav_stokes_inner_prod_no_pres_weak_norm}
	\end{split}
	\end{align}
	Next we invoke \cref{lem:forcing_equivalent} and write \cref{eqn:disc_nav_stokes_inner_prod_no_pres_weak_norm} as:
	\begin{align}
	\begin{split}
	\frac{1}{2}\int_{\Omega}\left( \rho\left(\phi^{k+1}\right)\norm{{\vec{v}}^{k+1}}^2 - \rho\left(\phi^{k}\right)\norm{\vec{v}^{k}}^2 \right)  \, d\vec{x} 
	&+ \frac{\delta t}{WeCn}\left(\tphi^k\tvi^k,\pd{\tmu^{k}}{x_i}\right) 
	+ \frac{\delta t}{Re} \norm{\sqrt{\eta(\phi^{k+1})} \nabla \widetilde{\vec{v}}^k}^2_{L^2} \\
	&- \frac{1}{Fr}\left(\hat{g_i}, \delta t \, \rho\left(\phi^{k+1}\right)\tvi^k\right) = 0.
	\end{split}
	\label{eqn:disc_nav_stokes_eqv_forcing_ch_without_grav} 
	\end{align}
	Next we simplify the gravity term.
	Notice that
	\begin{align}
		-\frac{1}{Fr}\left(\hat{g_i}, \delta t \, \rho\left(\phi^{k+1}\right)\tvi^k\right) = -\frac{1}{Fr}\left(\pd{\left(-y\right)}{x_i}, \delta t \, \rho\left(\phi^{k+1}\right)\tvi^k\right) = 
		-\frac{1}{Fr}\left(y, \delta t \, \pd{ \left(\rho\left(\phi^{k+1}\right)\tvi^k\right) }{x_i}\right),
	\label{eqn:lemma2_estimate_without_gravSimple}
	\end{align}
	where $y = x_2$ and $\hat{\vec{g}} = (0, -1, 0)$. Here we invoke that ${\tvecv}^{k+1} \in \vec{H}_{0}^1(\Omega)$ so the boundary terms go to zero while doing integration by parts. 
	Let $C_1 = \frac{\left(\rho_- - \rho_+\right)}{2} \, m(\phi)$, then using the continuity equation,
	\cref{eqn:cont}, and the definition of $J_i$ we obtain:
	\begin{align}
	\begin{split}
	&\frac{1}{Fr}\left(y, \delta t \, \pd{ \left(\rho\left(\phi^{k+1}\right)\tvi^k\right) }{x_i}\right) = -\frac{1}{Fr}\left(y,  \,  \rho\left(\phi^{k+1}\right) - \rho\left(\phi^{k}\right)  \right)  
	-\frac{\delta t \, C_1}{Fr\,Pe}\left(y,  \, \pd{}{x_i} \left( \pd{\tmu^{k}}{x_i} \right)\right)\\
	&= -\frac{1}{Fr}\left(y,  \,  \rho\left(\phi^{k+1}\right) - \rho\left(\phi^{k}\right) \right)   
	+\frac{\delta t \, C_1}{Fr\,Pe}\left(\pd{y}{x_i} ,  \, \pd{\tmu^{k}}{x_i} \right)\\
	&= -\frac{1}{Fr}\left(y,  \,  \rho\left(\phi^{k+1}\right) - \rho\left(\phi^{k}\right)  \right)  
	- \frac{\delta t \, C_1}{Fr\,Pe}\left( \pd{}{x_i}\left(\pd{y}{x_i}\right) ,  \, \tmu^{k} \right) 
	 + \frac{\delta t \, C_1}{Fr\,Pe} \int_{\mathrm{d}\Omega}\tmu^{k}\left(\pd{y}{x_i}\right) \hat{n_i} \mathrm{d}\vec{x}\\
	&= -\frac{1}{Fr}\left(y,  \,  \rho\left(\phi^{k+1}\right) - \rho\left(\phi^{k}\right) \right)  
	- \frac{\delta t \, C_1}{Fr\,Pe}\left( \pd{}{x_i}\left(\pd{y}{x_i}\right) ,  \, \tmu^{k} \right) 
	  + \frac{\delta t \, C_1}{Fr\,Pe} \int_{\mathrm{d}\Omega}\tmu^{k}\hat{g_i} \hat{n_i} \mathrm{d}\vec{x},
	\end{split}
	\end{align} 
	where $\hat{n_i}$ is outward pointing normal to the boundary of the domain $\Omega$.  
	\begin{remark}
		We will assume that 
		\begin{equation}
		\frac{\delta t C_1}{Fr\,Pe} \int_{\mathrm{d}\Omega}\tmu^{k}\hat{g_i} \hat{n_i} \mathrm{d}\vec{x} = 0,
		\end{equation}
		which is true as long as there is no three-phase contact line on any boundary on which $\hat{n_i} \hat{g_i}$ is non-zero.
	\end{remark} 
	Under the above assumption we can write 
	\begin{equation}
	-\frac{1}{Fr}\left(\hat{g_i}, \delta t \, \rho\left(\phi^{k+1}\right)\tvi^k\right) =  
	-\frac{1}{Fr}\left(y, \delta t \, \pd{ \left(\rho\left(\phi^{k+1}\right)\tvi^k\right) }{x_i}\right) = 
	\frac{1}{Fr}\left(y,  \,  \rho\left(\phi^{k+1}\right) - \rho\left(\phi^{k}\right) \right).
	\end{equation}
	Combining this last result with \cref{eqn:disc_nav_stokes_eqv_forcing_ch_without_grav} yields the desired result:
	\begin{align}
	\begin{split}
	\frac{1}{2}\int_{\Omega}\left( \rho\left(\phi^{k+1}\right)\norm{{\vec{v}}^{k+1}}^2 - \rho\left(\phi^{k}\right)\norm{\vec{v}^{k}}^2 \right)  \, d\vec{x} + & \,
	\frac{\delta t}{WeCn}\left(\tphi^k\tvi^k,\pd{\tmu^{k}}{x_i}\right) 
	+ \frac{\delta t}{Re} \norm{\sqrt{\eta(\phi^{k+1})} \nabla \widetilde{\vec{v}}}^2_{L^2} \\
	+& \, \frac{1}{Fr}\left(y,  \,  \rho\left(\phi^{k+1}\right) - \rho\left(\phi^{k}\right)  \right) = 0.
	\end{split}
	\label{eqn:disc_nav_stokes_eqv_forcing_ch} 
	\end{align}	
\end{proof}

\begin{proposition}
	The following identity holds: 
	\begin{equation}
	 \left({\psi^\prime(\tphi^k)}, \phi^{k+1} - \phi^k\right) = \left(\psi(\phi^{k+1}) - \psi(\phi^{k}), 1\right) -  \left(\frac{\psi^{\mathrm{\prime\prime\prime}}(\lambda)}{24},\left(\phi^{k+1} - \phi^k\right)^3\right),
	 \label{eq:cubic_term}
	 \end{equation}
	for some $\lambda$ between $\phi^k$ and $\phi^{k+1}$.
	\label{prop:free_en_taylor}
\end{proposition}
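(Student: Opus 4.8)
The plan is to recognize \cref{eq:cubic_term} as the pointwise error formula for the midpoint quadrature rule applied to $\int_{\phi^k}^{\phi^{k+1}}\psi'(s)\,ds$, integrated over $\Omega$. Working at a fixed $\vec{x}\in\Omega$, I would set $h := (\phi^{k+1}-\phi^k)/2$ so that $\phi^{k+1} = \tphi^k + h$ and $\phi^k = \tphi^k - h$, since $\tphi^k$ is precisely the midpoint of $\phi^k$ and $\phi^{k+1}$. The identity to be established is then purely algebraic in the real numbers $\tphi^k$ and $h$, and the $L^2$ inner-product statement follows by integration once the pointwise version is secured.

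First I would Taylor expand $\psi$ about the midpoint $\tphi^k$ to third order with Lagrange remainder, evaluated at the two endpoints:
\begin{align*}
\psi(\tphi^k + h) &= \psi(\tphi^k) + \psi'(\tphi^k)\,h + \tfrac{1}{2}\psi''(\tphi^k)\,h^2 + \tfrac{1}{6}\psi'''(\xi_+)\,h^3, \\
\psi(\tphi^k - h) &= \psi(\tphi^k) - \psi'(\tphi^k)\,h + \tfrac{1}{2}\psi''(\tphi^k)\,h^2 - \tfrac{1}{6}\psi'''(\xi_-)\,h^3,
\end{align*}
for some $\xi_+$ between $\tphi^k$ and $\phi^{k+1}$ and some $\xi_-$ between $\phi^k$ and $\tphi^k$. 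Subtracting, the constant and second-derivative terms cancel, leaving
\begin{equation*}
\psi(\phi^{k+1}) - \psi(\phi^{k}) = \psi'(\tphi^k)\,(\phi^{k+1}-\phi^k) + \tfrac{1}{6}\big(\psi'''(\xi_+)+\psi'''(\xi_-)\big)\,h^3,
\end{equation*}
where I used $2h = \phi^{k+1}-\phi^k$. Substituting $h^3 = (\phi^{k+1}-\phi^k)^3/8$ then isolates $\psi'(\tphi^k)(\phi^{k+1}-\phi^k)$ with a remainder proportional to $(\phi^{k+1}-\phi^k)^3$ carrying the factor $\tfrac{1}{48}\big(\psi'''(\xi_+)+\psi'''(\xi_-)\big)$.

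To match the stated coefficient $\tfrac{1}{24}\psi'''(\lambda)$, I would collapse the two remainder values into one. Since $\psi$ is a polynomial, $\psi'''$ is continuous, so by the intermediate value theorem there is a $\lambda$ between $\xi_-$ and $\xi_+$ — hence between $\phi^k$ and $\phi^{k+1}$ — with $\tfrac{1}{2}\big(\psi'''(\xi_+)+\psi'''(\xi_-)\big) = \psi'''(\lambda)$; for the specific quartic one has $\psi'''(\phi)=6\phi$, so $\lambda=(\xi_++\xi_-)/2$ and this step is in fact exact. This converts $\tfrac{1}{48}\big(\psi'''(\xi_+)+\psi'''(\xi_-)\big)$ into $\tfrac{1}{24}\psi'''(\lambda)$ and, after rearranging, gives the pointwise identity
\begin{equation*}
\psi'(\tphi^k)\,(\phi^{k+1}-\phi^k) = \psi(\phi^{k+1}) - \psi(\phi^{k}) - \tfrac{1}{24}\psi'''(\lambda)\,(\phi^{k+1}-\phi^k)^3.
\end{equation*}
Integrating over $\Omega$ (with $\lambda=\lambda(\vec{x})$) and rewriting each term as an $L^2$ inner product against the constant $1$ yields exactly \cref{eq:cubic_term}.

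I expect the only delicate point to be this remainder-combining step: strictly, $\xi_\pm$ and therefore $\lambda$ depend on $\vec{x}$, so to make the integrated statement rigorous one should note that $\lambda(\vec{x})$ can be chosen as a measurable function of $\vec{x}$ — trivial here, since the quartic $\psi$ makes $\lambda$ an explicit algebraic expression in $\phi^k$ and $\phi^{k+1}$. Everything else is routine Taylor expansion.
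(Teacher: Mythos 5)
Your proposal is correct and rests on exactly the same identity the paper uses: the midpoint-rule error formula $\psi'(\tphi^k) - \frac{\psi(\phi^{k+1})-\psi(\phi^k)}{\phi^{k+1}-\phi^k} = -\frac{\psi'''(\lambda)}{24}(\phi^{k+1}-\phi^k)^2$, multiplied by $\phi^{k+1}-\phi^k$ and integrated over $\Omega$. The only difference is that the paper quotes this Taylor identity without derivation, whereas you derive it explicitly (two third-order expansions about the midpoint plus the intermediate value theorem to merge the two remainders), and you also flag the $\vec{x}$-dependence and measurability of $\lambda$, which the paper leaves implicit.
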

\begin{proof}
    Recall that $\tphi^k = (\phi^{k+1}+\phi^k)/2$. From Taylor series we note the following:
    \[
    \psi^\prime\left( \frac{\phi^{k+1}+\phi^k}{2} \right) - \frac{\psi(\phi^{k+1}) - \psi(\phi^{k})}{\phi^{k+1} - \phi^k}
    = -\frac{\psi'''(\lambda)}{24}  \left( \phi^{k+1} - \phi^k \right)^2,
    \]
    for some $\lambda$ between $\phi^k$ and $\phi^{k+1}$. Computing the inner product of this expression
    with $\phi^{k+1} - \phi^k$
    and slightly re-arranging yields the desired result.
\end{proof}

\begin{claim}[Estimate of the correction]
	The following estimate holds:
	\begin{align}
	\left| \left( \frac{\psi^{\mathrm{\prime\prime\prime}}(\lambda)}{24},\left(\phi^{k+1} - \phi^k\right)^3 \right) \right| \le C_m L^3\delta t^3 \norm{\frac{\psi^{\mathrm{\prime\prime\prime}}(\lambda)}{24}}_{L^\infty(\Omega)},
	\label{eqn:estimate_corr_cl}
	\end{align}
	where $L$ is a Lipschitz constant and $C_m$ is the volume of the physical domain:
	\[
		\left| \phi^{k+1} - \phi^k \right| \le L \, \delta t \qquad \text{and} \qquad
		C_m := \int_{\Omega} \mathrm{d}\vec{x}.
	\]
	\label{claim:correction_estimate}
\end{claim}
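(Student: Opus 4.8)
The plan is to expand the $L^2$ inner product as an integral and bound it directly using the two hypotheses---the pointwise Lipschitz bound on the increment $\phi^{k+1}-\phi^k$ and the $L^\infty$ bound on the coefficient $\psi'''(\lambda)/24$---with the domain volume $C_m$ appearing only from integrating a constant. This is essentially an application of H\"older's inequality in the $L^1$--$L^\infty$ pairing, so no deep machinery is needed.

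First I would write, using the definition of the standard $L^2$ inner product,
\[
\left( \frac{\psi'''(\lambda)}{24}, \left(\phi^{k+1}-\phi^k\right)^3 \right) = \int_\Omega \frac{\psi'''(\lambda)}{24}\left(\phi^{k+1}-\phi^k\right)^3 \, d\vec{x},
\]
and then pass the absolute value inside the integral via the triangle inequality to obtain
\[
\left| \left( \frac{\psi'''(\lambda)}{24}, \left(\phi^{k+1}-\phi^k\right)^3 \right) \right| \le \int_\Omega \left| \frac{\psi'''(\lambda)}{24} \right| \, \left| \phi^{k+1}-\phi^k \right|^3 \, d\vec{x}.
\]

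Next I would factor the integrand using the two stated bounds. The Lipschitz hypothesis $\left| \phi^{k+1} - \phi^k \right| \le L\,\delta t$ gives $\left| \phi^{k+1}-\phi^k \right|^3 \le L^3 \delta t^3$ pointwise, and by definition of the essential supremum we may replace $\left| \psi'''(\lambda)/24 \right|$ by $\norm{\psi'''(\lambda)/24}_{L^\infty(\Omega)}$. Both are constants that pull out of the integral, leaving
\[
\int_\Omega \left| \frac{\psi'''(\lambda)}{24} \right| \, \left| \phi^{k+1}-\phi^k \right|^3 \, d\vec{x} \le L^3 \delta t^3 \norm{\frac{\psi'''(\lambda)}{24}}_{L^\infty(\Omega)} \int_\Omega d\vec{x} = C_m L^3 \delta t^3 \norm{\frac{\psi'''(\lambda)}{24}}_{L^\infty(\Omega)},
\]
which is exactly the claimed estimate, since $C_m := \int_\Omega d\vec{x}$ is the volume of the domain.

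The only point requiring care is ensuring that the $L^\infty$ norm on the right-hand side is finite, i.e. that $\psi'''(\lambda)/24$ is essentially bounded. Since the chosen free energy is $\psi(\phi)=\tfrac{1}{4}(\phi^2-1)^2$, one computes $\psi'''(\phi)=6\phi$, so the coefficient equals $\lambda/4$ with $\lambda$ lying between $\phi^k$ and $\phi^{k+1}$; the bound therefore holds provided $\phi^k$ and $\phi^{k+1}$ are themselves essentially bounded, which is the regularity assumed for the phase field (and consistent with the saturation discussion surrounding $\phi^*$). I do not expect this to be a genuine obstacle---it is the only modeling assumption the estimate leans on, and the remainder is a routine $L^1$--$L^\infty$ bound.
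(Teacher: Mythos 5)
Your proposal is correct and follows essentially the same route as the paper's proof: expand the inner product as an integral, pull out the $L^\infty$ norm of $\psi'''(\lambda)/24$, and then apply the Lipschitz bound $\left|\phi^{k+1}-\phi^k\right|\le L\,\delta t$ before integrating the constant over $\Omega$ to produce $C_m$. Your closing remark verifying that $\psi'''(\phi)=6\phi$ is bounded whenever $\phi$ is, is a sensible extra check but does not change the argument.
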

\begin{proof}
    We start with the error term in \cref{prop:free_en_taylor} and obtain the following upper bound:
	\begin{align}
	\begin{split}
	\left| \left(\frac{\psi^{\mathrm{\prime\prime\prime}}(\lambda)}{24},\left(\phi^{k+1} - \phi^k\right)^3\right) \right|
	&= \abs{\int_{\Omega} \frac{\psi^{\mathrm{\prime\prime\prime}}(\lambda)}{24}\left(\phi^{k+1} - \phi^k\right)^3 \mathrm{d}\vec{x}}\\
	&\le \norm{\frac{\psi^{\mathrm{\prime\prime\prime}}(\lambda)}{24}}_{L^\infty(\Omega)} \int_{\Omega} \abs{\phi^{k+1} - \phi^k}^3\mathrm{d}\vec{x}.
	\end{split}
	\end{align}
	Using the Lipschitz continuity of $\phi$ we arrive at the desired result: \cref{eqn:estimate_corr_cl}.
\end{proof}

 We are now in a position to prove energy stability.  The argument we present here is based on the fact that the energy functional given by  \cref{eqn:energy_functional} is decreasing as the discrete solution is evolving in time. This represents the strict adherence to the second law of thermodynamics at the semi-discrete level.  Therefore, if the difference between the energy functional between two time steps is negative, then we have achieved energy stability. We prove energy stability
 in the following theorem. 
		
\begin{theorem}[Energy stability]
		The time discretization of the Cahn-Hilliard Navier-Stokes (CHNS) equations as described by  \cref{eqn:nav_stokes_var_semi_disc} -- \cref{eqn:phi_eqn_var_semi_disc} is energy stable and satisfies the following energy law:
		\begin{equation}
		\begin{split}
		E_{tot}\left(\vec{v}^{k+1},\phi^{k+1} \right) - E_{tot}\left(\vec{v}^k,\phi^k\right) &= \frac{-\delta t}{Re} \norm{\sqrt{\eta(\phi^{k+1})}\widetilde{\vec{v}}^k}^2_{L^2}
		 - \frac{\delta t}{Pe Cn^2We} \norm{\nabla \widetilde{\mu}^k}^2_{L^2} \\ &+ \frac{1}{WeCn}\left(\frac{\psi^{\mathrm{\prime\prime\prime}}(\lambda)}{24},\left(\phi^{k+1} - \phi^k\right)^3\right),
		\label{energy_law}
		\end{split}
		\end{equation}	
		provided the following time-step restriction is observed:
		\begin{equation}
				0 \le \delta t \leq 
		\left(\frac{\frac{1}{Re}\left(\norm{\sqrt{\eta(\phi^{k+1})}\nabla \widetilde{\vec{v}}^k}_{L^2}^2\right) + 
			\frac{1}{Pe Cn^2We}\norm{\nabla \widetilde{\mu}^k}_{L^2}^2}
		{\frac{C_m L^3}{WeCn} \norm{\frac{\psi^{\mathrm{\prime\prime\prime}}(\lambda)}{24}}_{L^\infty(\Omega)}}\right)^{\frac{1}{2}}. \label{eqn:timestep_condition_thr}
		\end{equation}
		where, 
		\begin{equation}
		E_{\text{tot}}(\vec{v}^k,\phi^k) = \int_{\Omega}\frac{1}{2}\rho\left(\phi^k\right) \norm{\vec{v}^k}^2 \mathrm{d}\vec{x} + \frac{1}{CnWe}\int_{\Omega} \left(\psi(\phi^k) + \frac{Cn^2}{2} \norm{\nabla\phi^k}^2 + \frac{1}{Fr} \rho(\phi^k) y \right) \mathrm{d}\vec{x}.
		\label{eqn:energy_functional_disc}
		\end{equation}	
\label{th:energy_stability}
\end{theorem}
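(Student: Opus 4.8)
The plan is to derive the energy law \cref{energy_law} by testing each governing equation with a carefully chosen multiplier and then adding the resulting identities so that the inter-equation coupling term cancels, leaving exactly the increment of $E_{\text{tot}}$ on one side and the dissipation plus the cubic correction on the other. The time-step restriction \cref{eqn:timestep_condition_thr} then follows by bounding the indefinite cubic remainder with \cref{claim:correction_estimate} and demanding that the net right-hand side stay non-positive.

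First I would dispose of the kinetic, viscous, surface-tension, and gravitational contributions in one stroke by invoking \cref{lem:advection_estimate}: testing the momentum equation \cref{eqn:nav_stokes_var_semi_disc} with $w_i=\delta t\,\tvi^k$ (which already uses \cref{lem:forcing_equivalent} to recast the surface-tension forcing and the skew-symmetry of the convective terms from \cref{rem:skew-symmetry}) yields the identity \cref{eqn:disc_nav_stokes_eqv_forcing_ch}. This supplies the kinetic-energy increment $\tfrac{1}{2}\int_\Omega\bigl(\rho(\phi^{k+1})\norm{\vec{v}^{k+1}}^2-\rho(\phi^{k})\norm{\vec{v}^{k}}^2\bigr)\,d\vec{x}$, the viscous dissipation $\tfrac{\delta t}{Re}\norm{\sqrt{\eta(\phi^{k+1})}\nabla\tvecv^k}_{L^2}^2$, the gravitational-energy increment, and crucially the coupling term $\tfrac{\delta t}{WeCn}\bigl(\tphi^k\tvi^k,\pd{\tmu^{k}}{x_i}\bigr)$, which I then solve for.

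Next I would reconstruct the free-energy part of $E_{\text{tot}}$ from the phase-field equations. Testing the Cahn-Hilliard equation \cref{eqn:phi_eqn_var_semi_disc} with $q=\tmu^k$ and multiplying by $\delta t$ produces $(\tmu^k,\phi^{k+1}-\phi^k)=\delta t\,(\pd{\tmu^k}{x_i},\tvi^k\tphi^k)-\tfrac{\delta t}{PeCn}\norm{\nabla\tmu^k}_{L^2}^2$; the first right-hand term is precisely the coupling term isolated above (the $L^2$ inner product being symmetric), so adding the two identities cancels it and introduces the chemical-potential dissipation $\tfrac{\delta t}{PeCn^2We}\norm{\nabla\tmu^k}_{L^2}^2$ after scaling by $\tfrac{1}{WeCn}$. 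Independently, testing the chemical-potential equation \cref{eqn:mu_eqn_var_semi_disc} with $q=\phi^{k+1}-\phi^k$ gives $(\tmu^k,\phi^{k+1}-\phi^k)=(\tpsi',\phi^{k+1}-\phi^k)+Cn^2\bigl(\nabla(\phi^{k+1}-\phi^k),\nabla\tphi^k\bigr)$. I would then convert the two right-hand terms into genuine energy differences: the bulk term via \cref{prop:free_en_taylor}, which replaces $(\tpsi',\phi^{k+1}-\phi^k)$ by $(\psi(\phi^{k+1})-\psi(\phi^k),1)$ minus the cubic remainder, and the gradient term via the polarization identity $\norm{\nabla\phi^{k+1}}^2-\norm{\nabla\phi^k}^2=2\,\nabla\tphi^k\cdot\nabla(\phi^{k+1}-\phi^k)$, which turns $Cn^2\bigl(\nabla(\phi^{k+1}-\phi^k),\nabla\tphi^k\bigr)$ into $\tfrac{Cn^2}{2}\int_\Omega\bigl(\norm{\nabla\phi^{k+1}}^2-\norm{\nabla\phi^k}^2\bigr)\,d\vec{x}$. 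After multiplying by $\tfrac{1}{WeCn}$ these assemble the increment of the bulk-plus-gradient free energy in \cref{eqn:energy_functional_disc}, so combining with the momentum identity collapses the whole left side to $E_{\text{tot}}(\vec{v}^{k+1},\phi^{k+1})-E_{\text{tot}}(\vec{v}^k,\phi^k)$ and the right side to the two dissipation terms plus $\tfrac{1}{WeCn}\bigl(\psi'''(\lambda)/24,(\phi^{k+1}-\phi^k)^3\bigr)$, which is \cref{energy_law}.

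Finally, for energy stability I would observe that the two dissipation terms are manifestly non-positive (using $\eta(\phi^{k+1})\ge 0$, guaranteed by the saturation in \cref{eqn:phi_for_density}), while the cubic correction is indefinite; bounding its magnitude by \cref{claim:correction_estimate} with $\tfrac{C_m L^3\delta t^3}{WeCn}\norm{\psi'''(\lambda)/24}_{L^\infty(\Omega)}$ and requiring the sum to stay non-positive gives, after dividing through by $\delta t>0$ and solving the resulting quadratic inequality in $\delta t$, exactly the bound \cref{eqn:timestep_condition_thr}. I expect the main obstacle to be the bookkeeping that makes the coupling term $\tfrac{\delta t}{WeCn}\bigl(\tphi^k\tvi^k,\pd{\tmu^k}{x_i}\bigr)$ cancel cleanly across the momentum and Cahn-Hilliard identities with all $We$, $Cn$, $Pe$ factors consistent, together with correctly matching the gravitational-potential increment coming out of the continuity manipulation in \cref{lem:advection_estimate} against the $\rho(\phi)y$ contribution in $E_{\text{tot}}$; the free-energy reconstruction itself is routine once \cref{prop:free_en_taylor} and the polarization identity are in hand.
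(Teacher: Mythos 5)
Your proposal is correct and follows essentially the same route as the paper's proof: it assembles the energy law by testing the Cahn-Hilliard equation with $\delta t\,\tmu^k$, the chemical-potential equation with $\phi^{k+1}-\phi^k$ (invoking \cref{prop:free_en_taylor} and the polarization identity), and cancelling the coupling term $\frac{\delta t}{WeCn}\bigl(\tphi^k\tvi^k,\pd{\tmu^{k}}{x_i}\bigr)$ against the momentum identity supplied by \cref{lem:advection_estimate}, then obtains the time-step bound from \cref{claim:correction_estimate}. The only difference is the order in which the momentum and phase-field identities are presented, which is immaterial.
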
   
	
\begin{proof}
		We begin with taking the $L^2$ inner product of \cref{eqn:disc_time_phi_eqn_semi} with $\delta t \, \tmu^k$: 
		\begin{align}
		 \left(\phi^{k+1} - \phi^k, \tmu^k\right) = - \left(\pd{\left(\tvi^{k} \tphi^{k}\right)}{x_i}, \delta t \, \tmu^k\right) + \frac{\delta t}{PeCn} \left(\pd{}{x_i}\left({\pd{\tmu^{k}}{x_i}}\right) , \, \tmu^k  \right),
		\label{eq:weak_phi_eqn}
		\end{align}
		and integrate-by-parts on the right-hand side:
		\begin{equation}
		\begin{split}
		\left(\phi^{k+1} - \phi^k, \tmu^k\right) &= 
		  \left( \tvi^{k} \tphi^{k} , \delta t \, \pd{\tmu^k}{x_i} \right) - \frac{\delta t}{PeCn} \norm{\nabla \widetilde{\mu}^k}^2_{L^2}.	
		\label{eqn:phi_eqn_simple}
		\end{split}
		\end{equation}
		We now work with the $\mu$ equation by taking the $L^2$ inner product of \cref{eqn:disc_mu_eqn_semi} with $\phi^{k+1} - \phi^k$:
		\begin{equation}
		\left(\tmu^{k} , \phi^{k+1} - \phi^k\right) = \left(\tpsi', \phi^{k+1} - \phi^k\right) - Cn^2 \left(\pd{}{x_i}\left({\pd{\tphi^{k}}{x_i}}\right), \phi^{k+1} - \phi^k\right), 
		\end{equation}
		where $\tpsi'$ is defined by \cref{eqn:psi_ave_def}, and integrate-by-parts on the last term:
		\begin{align}
		 \left(\tmu^{k} , \phi^{k+1} - \phi^k\right) = \left(\tpsi', \phi^{k+1} - \phi^k\right) + \frac{Cn^2}{2} \left(\norm{\nabla \phi^{k+1}}^2_{L^2} - \norm{\nabla \phi^{k}}^2_{L^2}\right),
		\label{eqn:mu_eq_unsimplified}
		\end{align}
		where we also used the fact that $\tphi^{k} = (\phi^{k+1} + \phi^k)/2$.
		The first term on right-hand side of \cref{eqn:mu_eq_unsimplified} can be simplified further using            
		\cref{prop:free_en_taylor}:
		\begin{equation}
		\begin{split}
		\left(\tmu^{k} , \phi^{k+1} - \phi^k\right) &= \left(\psi(\phi^{k+1}) - \psi(\phi^{k}), 1\right) - \left(\frac{\psi^{\prime\prime\prime}(\lambda)}{24},\left(\phi^{k+1} - \phi^k\right)^3\right)  \\
		&+ \frac{Cn^2}{2} \left(\norm{\nabla \phi^{k+1}}^2_{L^2} - \norm{\nabla \phi^{k}}^2_{L^2}\right).
		\label{eqn:mu_eq_simple}
		\end{split}
		\end{equation}
		 Now, combining \cref{eqn:mu_eq_simple} and \cref{eqn:phi_eqn_simple} we have: 
		\begin{equation}
		\begin{split}
		 \left(\psi(\phi^{k+1}) - \psi(\phi^{k}), 1\right) &- 
		\left(\frac{\psi^{\prime\prime\prime}(\lambda)}{24},\left(\phi^{k+1} - \phi^k\right)^3\right) +  
		\frac{Cn^2}{2} \left(\norm{\nabla \phi^{k+1}}^2_{L^2} - \norm{\nabla \phi^{k}}^2_{L^2}\right) \\ &= 
		\left(\tvi^{k} \tphi^{k}, \delta t \pd{\tmu^k}{x_i}\right) - \frac{\delta t}{PeCn} \norm{\nabla \widetilde{\vec{\mu}}^k}^2_{L^2}.
		\end{split}
		\label{eqn:ch_eq_simple}
		\end{equation}
		Next we divide \cref{eqn:ch_eq_simple} by $We Cn$ and from \cref{lem:advection_estimate} 
		we can replace the first term on the right-hand side by \cref{eqn:cahn_hilliard_term_lemma}:
		\begin{equation}
		\begin{split}
		& \frac{1}{2}\int_{\Omega}\left( \rho\left(\phi^{k+1}\right)\norm{{\vec{v}}^{k+1}}^2 - \rho\left(\phi^{k}\right)\norm{\vec{v}^{k}}^2 \right)  \, d\vec{x}
		+ \frac{\delta t}{Re} \left(\norm{ \sqrt{\eta(\phi^{k+1})} \nabla \widetilde{\vec{v}}^k}^2_{L^2}\right) \\ & \qquad \qquad + 
		\frac{1}{WeCn}\left(\psi(\phi^{k+1}) - \psi(\phi^{k}), 1\right) 
		- \frac{1}{WeCn}\left(\frac{\psi^{\prime\prime\prime}(\lambda)}{24},\left(\phi^{k+1} - \phi^k\right)^3\right) \\
		& \qquad \qquad + \frac{Cn}{2We} \left(\norm{\nabla \phi^{k+1}}^2_{L^2} - \norm{\nabla \phi^{k}}^2_{L^2}\right) 
		+ 
		\frac{\delta t}{PeCn^2We} \norm{\nabla \tmu^{k}}^2_{L^2} \\ & \qquad \qquad + \frac{1}{Fr}\left(y,  \,  \rho\left(\phi^{k+1}\right) - \rho\left(\phi^{k}\right) \right)  =0.
        \end{split}
         \label{eqn:disc_nav_stokes_eqv_forcing_com}
		\end{equation}
		Simplifying and using the definition of the energy functional, \cref{eqn:energy_functional}, 
		we obtain the energy law:
		\begin{equation}
		\begin{split}
			E_{\text{tot}}\left(\vec{v}^{k+1},\phi^{k+1}\right) - E_{\text{tot}}\left(\vec{v}^k,\phi^k\right)  &= 
			\frac{-\delta t}{Re}\norm{ \sqrt{\eta(\phi^{k+1})} \nabla \widetilde{\vec{v}}^k}_{L^2}^2 - 
			\frac{\delta t}{Pe Cn^2We} \norm{\nabla \tmu^{k}}_{L^2}^2 \\
			&+\frac{1}{WeCn}\left(\frac{\psi^{\mathrm{\prime\prime\prime}}(\lambda)}{24},\left(\phi^{k+1} - \phi^k\right)^3\right).
			\end{split}
		\end{equation}
		In order for this energy to be non-increasing in forward time, we require the following:
		\begin{equation}
		\frac{\delta t}{Re} \norm{\sqrt{\eta(\phi^{k+1})} \nabla \widetilde{\vec{v}}^k}_{L^2}^2 + 
		\frac{\delta t}{Pe Cn^2We} \norm{\nabla \tmu^{k}}_{L^2}^2 \geq
		\frac{1}{WeCn}\left(\frac{\psi^{\mathrm{\prime\prime\prime}}(\lambda)}{24},\left(\phi^{k+1} - \phi^k\right)^3\right).
		\label{eqn:stab_condition}
		\end{equation}
		Using the estimate from \cref{claim:correction_estimate} we can guarantee this inequality
		provided that:
		\begin{equation}
		\frac{\delta t}{Re} \norm{\sqrt{\eta(\phi^{k+1})} \nabla \widetilde{\vec{v}}^k}_{L^2}^2 + 
		\frac{\delta t}{Pe Cn^2We} \norm{\nabla \tmu^{k}}_{L^2}^2 \geq
		\frac{1}{WeCn} \norm{\frac{\psi^{\mathrm{\prime\prime\prime}}(\lambda)}{24}}_{L^\infty(\Omega)}
		C_m L^3 \delta t^3.
		\end{equation}
		This condition can be turned into a condition on the maximum energy-stable time-step:
		\begin{equation}
		 0 \le \delta t \le \left( \frac{\frac{1}{Re} \norm{\sqrt{\eta(\phi^{k+1})} \nabla \widetilde{\vec{v}}^k}_{L^2}^2 + 
		\frac{1}{Pe Cn^2We} \norm{\nabla \tmu^{k}}_{L^2}^2}{\frac{C_m L^3}{WeCn} \norm{\frac{\psi^{\mathrm{\prime\prime\prime}}(\lambda)}{24}}_{L^\infty(\Omega)}} \right)^{\frac{1}{2}},
		\label{eqn:timestep_condition}
		\end{equation}
		which proves the theorem.
\end{proof}
\begin{remark}
	It is important to note that condition \cref{eqn:timestep_condition} is a very weak condition (satisfied for most $\delta t$), as all the quantities in the condition are order one quantities. The bounds presented for $\left(\frac{\psi^{\mathrm{\prime\prime\prime}}(\lambda)}{24},\left(\phi^{k+1} - \phi^k\right)^3\right)$ are the absolute worst case scenarios, which in practice would
	only rarely be achieved.  Therefore, though we cannot claim unconditional stability for the scheme, we can say that the scheme is energy stable for large range of $\delta t$ values and allows us to take large time steps.  It is common practice in the literature to approximate the free energy functional such that it has a form which will not result in the cubic term in \cref{eq:cubic_term} (see \citep{Shen2010, Shen2010a,Shen2010b} for examples), which results in an unconditionally stable scheme.  In the estimate we have presented we do not make any approximations on the form of free energy functional, which results in a slightly tighter restriction on the time step restriction.  
\end{remark}

\subsection{Solvability of the discrete-in-time, continuous-in-space CHNS system}
\label{subsec:semidiscrete_var_prob}
In this subsection we establish the solvability of system of equations \cref{eqn:nav_stokes_var_semi_disc} -- \cref{eqn:phi_eqn_var_semi_disc}.  
We follow the basic strategy used by \citet{Han2015}, which, after adaptation to the specific Cahn-Hilliard Navier-Stokes system considered in this work, can be summarized as follows:
\begin{itemize}
\item Show that \cref{eqn:mu_eqn_var_semi_disc} has the following property: given $\widetilde{\mu}^k$, 
then $\tphi^{k}$ is uniquely determined;
\item Show that \cref{eqn:nav_stokes_var_semi_disc} -- \cref{eqn:cont_var_semi_disc} have the following property: 
given $\widetilde{\mu}^k$, and hence $\phi^{k+1}$ from
\cref{eqn:mu_eqn_var_semi_disc} as stated above, then $\widetilde{\vec{v}}^k$ and $\widetilde{p}^{k}$ are uniquely determined;
\item This establishes $\tphi^{k}$, $\widetilde{\vec{v}}^k$, and $\widetilde{p}^{k}$ as uniquely determined by
$\widetilde{\mu}^k$; with this knowledge in hand, we can now view the remaining equation, \cref{eqn:phi_eqn_var_semi_disc}, 
as a scalar equation for $\widetilde{\mu}^k$;
\item Show that there exists a solution, $\widetilde{\mu}^k$, to  \cref{eqn:phi_eqn_var_semi_disc},
with $\tphi^{k}$ and $\widetilde{\vec{v}}^k$ understood
to be functions of $\widetilde{\mu}^k$ via \cref{eqn:nav_stokes_var_semi_disc} --  \cref{eqn:mu_eqn_var_semi_disc}.
\end{itemize}

The key to the above argumentation is the Browder-Minty theorem (e.g., see theorem 9.14 \citet{Ciarlet2013}) and the main theorem on pseudo-monotone operators due to Brezis (see theorem 27.A \citet{Zeidler1985IIb}), both of which we reproduce below for completeness.  
\begin{theorem}{(Browder-Minty (1963))}
	Let $X$ be a real, reflexive, Banach space and let $T:X \longrightarrow X^*$ be a monotone, coercive, continuous, and bounded operator, then for any $b \in X^*$, there exists a solution to 
	\begin{equation}
	T(u) = b.
	\end{equation}
	further, if $T:X \longrightarrow X^*$ is strictly monotone, then the solution $u$ is unique
	\label{thrm:browder_minty}  		
\end{theorem}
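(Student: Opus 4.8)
The plan is to prove existence by the \emph{Galerkin method}, passing from finite-dimensional approximations to the full space using reflexivity together with monotonicity (the Minty trick). First I would choose an increasing sequence of finite-dimensional subspaces $X_1 \subset X_2 \subset \cdots$ whose union is dense in $X$ (restricting to a separable closed subspace, or replacing the sequence by the directed family of all finite-dimensional subspaces, if $X$ is not itself separable). On each $X_n$, identified with a Euclidean space, I would pose the Galerkin problem: find $u_n \in X_n$ with $\langle T(u_n) - b, v\rangle = 0$ for all $v \in X_n$. The associated finite-dimensional vector field is continuous because $T$ is continuous, and coercivity gives $\langle T(v) - b, v\rangle > 0$ once $\|v\|$ is large enough; a standard corollary of Brouwer's fixed-point theorem (the ``acute angle'' lemma, asserting that a continuous field pointing outward on a large sphere must vanish inside) then yields a solution $u_n$ in a ball of $X_n$.

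Next I would extract a priori bounds and weak limits. Testing the Galerkin identity against $u_n$ itself gives $\langle T(u_n), u_n\rangle = \langle b, u_n\rangle$, and coercivity then forces $\|u_n\| \le R$ uniformly in $n$. Since $X$ is reflexive, a subsequence satisfies $u_n \rightharpoonup u$; since $T$ is bounded, $\{T(u_n)\}$ is bounded in $X^*$, so (passing to a further subsequence, using reflexivity of $X^*$) $T(u_n) \rightharpoonup \chi$ for some $\chi \in X^*$. Fixing $v \in X_m$ and letting $n \to \infty$ in the Galerkin equation shows $\langle \chi, v\rangle = \langle b, v\rangle$ on the dense union, whence $\chi = b$.

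The crux is to identify $\chi = T(u)$, that is, to pass to the limit in the nonlinear term; because $T$ is merely continuous and not compact, weak convergence $u_n \rightharpoonup u$ is by itself insufficient, and this is exactly where monotonicity is indispensable. Using $\langle T(u_n) - T(w), u_n - w\rangle \ge 0$ for an arbitrary fixed $w \in X$, I would expand the pairing, insert $\langle T(u_n), u_n\rangle = \langle b, u_n\rangle \to \langle b, u\rangle$, and take the limit to obtain $\langle b - T(w), u - w\rangle \ge 0$ for every $w$. Substituting $w = u - tz$ with $t>0$ and $z \in X$ arbitrary, dividing by $t$, and letting $t \to 0^+$ while using the hemicontinuity of $T$ along the segment gives $\langle b - T(u), z\rangle \ge 0$; replacing $z$ by $-z$ yields the reverse inequality, so $T(u) = b$. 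I expect this Minty limiting step to be the main obstacle, since it is the only point at which the hypotheses of monotonicity, continuity, and reflexivity must be combined simultaneously.

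Finally, uniqueness under strict monotonicity is immediate: if $T(u_1) = T(u_2) = b$, then $\langle T(u_1) - T(u_2), u_1 - u_2\rangle = 0$, which by strict monotonicity forces $u_1 = u_2$.
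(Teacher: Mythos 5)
The paper does not prove this statement at all: it is quoted verbatim as a classical result ``for completeness,'' with a pointer to Theorem~9.14 of Ciarlet (2013), and is then used as a black box in the solvability arguments of \cref{subsec:semidiscrete_var_prob}. So there is no in-paper proof to compare against. Your argument is the standard textbook proof of the Browder--Minty theorem and it is correct: Galerkin approximation on an exhausting family of finite-dimensional subspaces, existence of the approximants via the acute-angle corollary of Brouwer's theorem (using continuity and coercivity), the uniform bound $\|u_n\|\le R$ from coercivity after testing with $u_n$, extraction of weak limits $u_n\rightharpoonup u$ and $T(u_n)\rightharpoonup \chi=b$ by reflexivity and boundedness, and finally the Minty trick --- $\langle b - T(w), u-w\rangle\ge 0$ for all $w$, then $w=u-tz$, $t\to 0^+$ --- to identify $\chi$ with $T(u)$. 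You correctly isolate this last step as the one where monotonicity, (hemi)continuity, and reflexivity must all be used at once; indeed full continuity of $T$ is more than is needed there, as hemicontinuity along segments suffices, and your write-up already acknowledges this. The uniqueness argument from strict monotonicity is the standard one-liner. No gaps.
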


\begin{theorem}{(Brezis (1968))}
	Let $X$ be a real, reflexive, Banach space and let $T:X \longrightarrow X^*$ be a pseudo-monotone, coercive, continuous, and bounded operator, then for any $b \in X^*$, there exists a solution to 
	\begin{equation}
	T(u) = b.
	\end{equation}
	\label{thrm:pseudo-monotone}  		
\end{theorem}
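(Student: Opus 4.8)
The plan is to prove existence by the \emph{Galerkin method}, the classical route for pseudo-monotone operators, noting first that Browder--Minty is subsumed: a monotone, continuous (hence hemicontinuous), bounded operator is automatically pseudo-monotone, so a single argument delivers both theorems. I would fix an increasing sequence of finite-dimensional subspaces $X_1 \subset X_2 \subset \cdots \subset X$ whose union is dense in $X$ (available since the spaces of interest here, being built from Sobolev spaces, are separable), and on each level seek a Galerkin solution $u_n \in X_n$ with
\begin{equation}
\langle T(u_n), v \rangle = \langle b, v \rangle \qquad \forall \, v \in X_n .
\end{equation}
Identifying $X_n$ with $\mathbb{R}^{d_n}$ via a basis $\{\phi_i\}$, the map $u \mapsto \bigl(\langle T(u) - b, \phi_i\rangle\bigr)_i$ is continuous because $T$ is continuous, and coercivity makes $\langle T(u) - b, u\rangle \ge 0$ on a sufficiently large sphere; the standard acute-angle corollary of the Brouwer fixed-point theorem then produces a solution $u_n$ on each level.

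Next I would extract limits. Testing with $v = u_n$ gives $\langle T(u_n), u_n\rangle = \langle b, u_n\rangle \le \norm{b}_{X^*}\norm{u_n}$, so coercivity forces a uniform bound $\norm{u_n} \le C$, and reflexivity yields a subsequence (not relabeled) with $u_n \rightharpoonup u$. Boundedness of $T$ makes $\{T(u_n)\}$ bounded in $X^*$, so a further subsequence satisfies $T(u_n) \rightharpoonup \chi$. For any fixed $w \in \bigcup_m X_m$ the Galerkin identity gives $\langle T(u_n), w\rangle = \langle b, w\rangle$ for all large $n$, whence $\langle \chi, w\rangle = \langle b, w\rangle$; by density $\chi = b$. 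Combining this with $\langle b, u_n\rangle \to \langle b, u\rangle$ gives
\begin{equation}
\limsup_{n\to\infty} \langle T(u_n), u_n - u\rangle = \lim_{n\to\infty}\langle b, u_n\rangle - \langle \chi, u\rangle = \langle b, u\rangle - \langle b, u\rangle = 0 ,
\end{equation}
which is exactly the hypothesis that activates pseudo-monotonicity.

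The heart of the argument, and the step I expect to be the main obstacle, is the nonlinear limit passage: we possess only weak convergence $u_n \rightharpoonup u$ with no compactness to push directly through $T$. Pseudo-monotonicity supplies the missing ingredient; from the $\limsup$ bound above it yields, for every $w \in X$,
\begin{equation}
\langle T(u), u - w\rangle \le \liminf_{n\to\infty} \langle T(u_n), u_n - w\rangle = \lim_{n\to\infty}\langle b, u_n\rangle - \langle \chi, w\rangle = \langle b, u - w\rangle ,
\end{equation}
again using $\chi = b$. Finally I would remove the inequality by the standard linearization: choosing $w = u \mp t z$ with $t > 0$ and arbitrary $z \in X$, dividing by $t$, and letting $t \downarrow 0$ forces $\langle T(u), z\rangle = \langle b, z\rangle$ for all $z$, i.e. $T(u) = b$. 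No uniqueness is asserted here, consistent with the weaker pseudo-monotone hypothesis (in contrast to the strictly monotone case of Browder--Minty, where testing $T(u_1) - T(u_2) = 0$ against $u_1 - u_2$ immediately gives $u_1 = u_2$).
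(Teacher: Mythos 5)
The paper offers no proof of this statement: it is quoted verbatim as a classical result (Brezis 1968; Theorem~27.A of Zeidler) and reproduced ``for completeness,'' so there is nothing internal to compare your argument against. Your Galerkin-method proof is the standard textbook proof of exactly that theorem, and it is correct in all essentials: the Brouwer/acute-angle step for the finite-dimensional systems, the coercivity-based a priori bound $\norm{u_n}\le C$, the weak-compactness extraction via reflexivity, the identification $\chi=b$ by density of $\bigcup_m X_m$, the computation $\limsup_n\langle T(u_n),u_n-u\rangle=0$ that activates pseudo-monotonicity, and the final Minty-type linearization are all sound. The one caveat worth flagging is that the Galerkin construction needs a countable increasing family of finite-dimensional subspaces with dense union, i.e.\ separability of $X$, which is not among the stated hypotheses; you acknowledge this, and it is harmless in context since the paper only ever applies the theorem with $X=H^1(\Omega)$, which is separable (the non-separable case requires a net over all finite-dimensional subspaces or a reduction to separable subspaces, which your argument does not cover). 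Your side remarks---that Browder--Minty is subsumed because a monotone, hemicontinuous, bounded operator is pseudo-monotone, and that strict monotonicity restores uniqueness---are also consistent with how the paper deploys the two theorems in \cref{subsec:semidiscrete_var_prob}.
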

%
%
\begin{lemma}[Solvability of \eqref{eqn:mu_eqn_var_semi_disc}]
	Given $\tmu^{k} \in H^1(\Omega)$ and $\phi^{k} \in H^1(\Omega)$, there exists a unique
	solution $\tphi^{k} \in H^1(\Omega)$ to \cref{eqn:mu_eqn_var_semi_disc}.
	This establishes the solution operator:
	\[
		\tphi^{k} \left( \tmu^{k} \right): \tmu^{k} \mapsto \tphi^{k}.
    \]
	\label{lemma:mu_unique}
\end{lemma}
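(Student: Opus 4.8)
The plan is to recast the weak equation \eqref{eqn:mu_eqn_var_semi_disc} as a single operator equation in $X := H^1(\Omega)$ and then invoke the Browder--Minty theorem (\cref{thrm:browder_minty}). Concretely, I would define $T : X \to X^*$ by
\[
\langle T(\tphi^k), q\rangle := Cn^2\left(\pd{\tphi^k}{x_i}, \pd{q}{x_i}\right) + \left(\psi'(\tphi^k), q\right), \qquad q \in X,
\]
together with $b \in X^*$ given by $\langle b, q\rangle := (\tmu^k, q)$. Then \eqref{eqn:mu_eqn_var_semi_disc} is precisely $T(\tphi^k) = b$, so it suffices to verify that $T$ is bounded, hemicontinuous, coercive, and strictly monotone; strict monotonicity then delivers both existence and the uniqueness asserted in the lemma, and hence the well-defined solution operator $\tphi^k(\tmu^k)$.

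First I would dispose of the routine hypotheses. Since $\psi'(\phi)=\phi^3-\phi$ and $H^1(\Omega)\hookrightarrow L^6(\Omega)$ for $n\le 3$, the Nemytskii map $\phi\mapsto\phi^3$ sends $X$ continuously into $L^2(\Omega)\subset X^*$, which yields boundedness and continuity of $T$ (continuity along rays being immediate). For coercivity I would test against $\tphi^k$ itself, obtaining
\[
\langle T(\phi),\phi\rangle = Cn^2\norm{\nabla\phi}_{L^2}^2 + \norm{\phi}_{L^4}^4 - \norm{\phi}_{L^2}^2 .
\]
On the bounded domain $\Omega$ the offending term is absorbed by the quartic via $\norm{\phi}_{L^2}^2 \le \tfrac12\norm{\phi}_{L^4}^4 + \tfrac12 C_m$ (H\"older then Young), leaving $\langle T(\phi),\phi\rangle \ge Cn^2\norm{\nabla\phi}_{L^2}^2 + \tfrac12\norm{\phi}_{L^4}^4 - \tfrac12 C_m$, so that $\langle T(\phi),\phi\rangle/\norm{\phi}_{H^1}\to\infty$. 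Coercivity is therefore not the difficulty.

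The crux --- and the step I expect to be genuinely delicate --- is (strict) monotonicity, because the concave $-\phi$ piece of $\psi'$ is not monotone. Writing $w := u-v$ and using $u^3-v^3 = (u^2+uv+v^2)\,w$ gives
\[
\langle T(u)-T(v),\,u-v\rangle = Cn^2\norm{\nabla w}_{L^2}^2 + \int_\Omega w^2\left(u^2+uv+v^2\right)\mathrm{d}\vec{x} - \norm{w}_{L^2}^2 .
\]
The first two terms are nonnegative, and in fact the cubic integral vanishes only when $w=0$ almost everywhere (on $\{w\neq 0\}$ one has $u\neq v$, hence $u^2+uv+v^2>0$); but the final term carries the wrong sign and cannot be controlled by a Poincar\'e inequality here, since constants lie in the kernel of the gradient under the Neumann setting.

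To close the argument I would treat the linear lower-order contribution $q\mapsto-(\phi,q)$ as a strongly continuous (compact) perturbation of the strictly monotone principal operator $\phi\mapsto -Cn^2\Delta\phi+\phi^3$: the sum is then pseudo-monotone, coercive, bounded and continuous, so Brezis' theorem (\cref{thrm:pseudo-monotone}) already yields \emph{existence}. The hard part is the \emph{uniqueness} claimed in the lemma, which demands that the nonnegative gradient and cubic contributions strictly dominate $\norm{w}_{L^2}^2$; this fails for arbitrary constant states and so must be extracted either from a finer structural property of admissible solutions or by a convex splitting of $\psi$ that removes the concave term from the operator. I therefore anticipate that essentially all of the real work, and any supplementary hypothesis, will concentrate on establishing this strict monotonicity estimate against the $-\phi$ term.
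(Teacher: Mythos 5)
Your operator formulation is exactly the route the paper intends: the paper's entire justification of this lemma is a one-sentence appeal to the Browder--Minty theorem, with the detailed verification omitted ``for brevity.'' Your checks of boundedness, continuity, and coercivity are what that appeal would require, and your existence argument via Brezis' theorem --- splitting the strictly monotone principal part $\phi \mapsto -Cn^2\Delta\phi + \phi^3$ from the compact lower-order perturbation $\phi \mapsto -\phi$ --- is a correct and complete way to obtain existence of $\tphi^k$.

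The obstruction you isolate for uniqueness is genuine and is not resolved by the paper. The operator $\phi \mapsto -Cn^2\Delta\phi + \psi'(\phi)$ with homogeneous Neumann data is not monotone because of the concave $-\phi$ piece of $\psi'(\phi)=\phi^3-\phi$, and strict monotonicity --- the hypothesis Browder--Minty needs for uniqueness --- fails outright. Indeed, for the admissible datum $\tmu^k \equiv 0 \in H^1(\Omega)$ the three constant functions $\tphi^k \equiv 0$ and $\tphi^k \equiv \pm 1$ are distinct $H^1(\Omega)$ solutions of \cref{eqn:mu_eqn_var_semi_disc}, so the uniqueness asserted in the lemma cannot hold for arbitrary data; note also that the hypothesis $\phi^k \in H^1(\Omega)$ cannot select a branch, since $\phi^k$ does not appear in \cref{eqn:mu_eqn_var_semi_disc} at all. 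The standard repairs in the Cahn--Hilliard literature are a convex splitting of $\psi$ (treating only the convex part $\phi^3$ implicitly, which restores strict monotonicity of the implicit operator), an added stabilization term, or an a priori restriction to a solution regime where $\psi''$ is effectively controlled. Your proposal correctly locates where the real work lies; the gap is in the lemma as stated and in the paper's omitted proof, not in your argument.
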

Proof of the above lemma follows from \cref{thrm:browder_minty}, where the continuity and boundedness of the solution operator follows from the fact that \cref{eqn:mu_eqn_var_semi_disc} is an elliptic semi-linear equation.  The detailed proof is omitted here for brevity.


\begin{lemma}[Solvability of \eqref{eqn:nav_stokes_var_semi_disc} -- \eqref{eqn:cont_var_semi_disc}]
	Given $\tmu^{k} \in H^1(\Omega)$,
 	 $\vec{v}^k \in \vec{H}^1_0(\Omega)$, and $\vec{\phi}^k \in H^1(\Omega)$,
	 there exists a unique solution $\widetilde{\vec{v}}^{k} \in \vec{H}^1_0(\Omega)$ 
	 and $\widetilde{p}^k \in H^1(\Omega)$ to \cref{eqn:nav_stokes_var_semi_disc} -- 
	 \cref{eqn:cont_var_semi_disc}. 
	 This establishes the solution operator:
	\[
		\widetilde{\vec{v}}^{k} \left( \tmu^{k} \right): \tmu^{k} \mapsto \widetilde{\vec{v}}^{k}.
    \]
	\label{lemma:v_unique}
\end{lemma}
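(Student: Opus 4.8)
The plan is to exploit the block-iterative structure of the scheme. Since $\tmu^{k}$ is given, \cref{lemma:mu_unique} determines $\tphi^{k}$ uniquely, and hence $\phi^{k+1}=2\tphi^{k}-\phi^{k}$ together with $\rho(\phi^{k+1})$, $\eta(\phi^{k+1})$, and the mass flux $\tJj^{k}=\tfrac{(\rho_--\rho_+)}{2}\,\partial\tmu^{k}/\partial x_j$ become \emph{known} coefficient fields. Then \cref{eqn:nav_stokes_var_semi_disc}--\cref{eqn:cont_var_semi_disc} is a nonlinear Stokes-type saddle-point problem for $(\widetilde{\vec{v}}^k,\tp^{k})$, linear in everything except the quadratic convective term $\rho(\phi^{k+1})\,\tvj^{k}\,\partial\tvi^{k}/\partial x_j$. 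First I would eliminate the pressure by restricting to the weakly solenoidal subspace $\vec{V}:=\{\vec{w}\in\vec{H}^1_0(\Omega):(q,\partial w_i/\partial x_i)=0\ \forall q\}$, on which the pressure term in \cref{eqn:nav_stokes_var_semi_disc} vanishes identically. The problem then reads $T(\widetilde{\vec{v}}^k)=b$ in $\vec{V}^*$, where $T$ collects the time-derivative, convective, mass-flux and viscous contributions, and $b\in\vec{V}^*$ is the fixed functional built from $\vec{v}^k$, gravity, and the (now known) surface-tension forcing.

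The core of the argument is to verify the hypotheses of the pseudo-monotone operator theorem, \cref{thrm:pseudo-monotone}, for $T$ on the real, reflexive Banach space $\vec{V}$. \textbf{Coercivity:} testing $T(\widetilde{\vec{v}}^k)$ against $\widetilde{\vec{v}}^k$, both convective contributions vanish by the skew-symmetry identities \cref{eqn:trilinear_zero_vel}--\cref{eqn:trilinear_zero_massflux} of \cref{rem:skew-symmetry}, leaving the time term $\tfrac{2}{\delta t}(\rho(\phi^{k+1})\tvi^k,\tvi^k)$ and the viscous term $\tfrac{1}{Re}(\eta(\phi^{k+1})\partial_j\tvi^k,\partial_j\tvi^k)$; since the saturated density and viscosity are bounded below by positive constants through the $\phi^*$ construction, these terms together with the Poincar\'e inequality give $(T(u),u)/\norm{u}_{\vec{H}^1}\to\infty$. \textbf{Boundedness and continuity:} each term is controlled by H\"older's inequality and the embedding $\vec{H}^1\hookrightarrow\vec{L}^4$ (valid for $n\le 3$), the convective term being the only quadratic one. \textbf{Pseudo-monotonicity:} I would split $T=A+C$, where $A$ (time $+$ viscous $+$ skew-symmetric mass-flux) is monotone, hemicontinuous and bounded, while $C$ (the convective term) is strongly continuous, i.e.\ completely continuous, by the compact Rellich--Kondrachov embedding $\vec{H}^1\hookrightarrow\hookrightarrow\vec{L}^4$; the sum of a monotone hemicontinuous operator and a strongly continuous one is pseudo-monotone. \cref{thrm:pseudo-monotone} then yields a solution $\widetilde{\vec{v}}^k\in\vec{V}$. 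The pressure $\tp^{k}$ is recovered by a de Rham/inf-sup argument: the momentum residual is a functional on $\vec{H}^1_0(\Omega)$ that annihilates $\vec{V}$, so the continuous inf-sup (LBB) condition on $\vec{H}^1_0(\Omega)\times L^2_0(\Omega)$ produces $\tp^{k}$, unique up to an additive constant and with the stated $H^1$ regularity under the assumed smoothness of the data.

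For uniqueness I would work on the a priori bounded set of solutions. Testing against the solution and absorbing $b$ shows every solution obeys $\norm{\nabla\widetilde{\vec{v}}^k}_{L^2}\le M$ with $M$ depending on the data and $\delta t$. Given two solutions $\vec{v}_1,\vec{v}_2\in\vec{V}$, I would subtract the equations and test with $\vec{e}:=\vec{v}_1-\vec{v}_2$; the time and viscous terms bound the difference below by $\tfrac{2\rho_{\min}}{\delta t}\norm{\vec{e}}^2_{L^2}+\tfrac{\eta_{\min}}{Re}\norm{\nabla\vec{e}}^2_{L^2}$, while the convective difference reduces, again by skew-symmetry and $\nabla\cdot\vec{e}=0$, to a single trilinear term bounded by $C\,M\,\norm{\nabla\vec{e}}^2_{L^2}$ via a Ladyzhenskaya-type estimate. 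The main obstacle lies exactly here: the convective term is not globally monotone, so neither existence nor uniqueness can be obtained from \cref{thrm:browder_minty} directly. Existence must route through pseudo-monotonicity as above, and uniqueness holds only once the dominant implicit time term overwhelms the trilinear remainder, that is for $\delta t$ small enough that $\tfrac{2\rho_{\min}}{\delta t}>C\,M$, a restriction compatible with the energy-stability time-step bound \cref{eqn:timestep_condition_thr}. With $\vec{e}=\vec{0}$ established, uniqueness of $\tp^{k}$ again follows from the inf-sup condition.
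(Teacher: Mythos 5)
Your argument is correct in its essentials, but it takes a genuinely different (and far more detailed) route than the paper, which simply asserts that the lemma ``follows from the generalized Lax-Milgram theorem under a suitable inf-sup condition'' and omits the proof with a citation to \citet{Volker2016}. The paper's cited tool is intrinsically a \emph{linear} existence theorem, which is natural if one views the momentum block as an Oseen-type problem (convective velocity lagged, as happens inside each Newton/block iteration); your proposal instead confronts the quadratic convective term head-on, eliminating the pressure on the weakly divergence-free subspace, proving existence via \cref{thrm:pseudo-monotone} with the split $T=A+C$ (monotone time/viscous/mass-flux part plus a completely continuous convective part via Rellich--Kondrachov), and recovering $\tp^k$ by de Rham and the continuous inf-sup condition. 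That is a standard and sound strategy, and it buys an existence result for the genuinely nonlinear semi-discrete momentum system rather than for a linearization. Two caveats are worth recording. First, your uniqueness argument honestly requires a smallness condition of the form $\tfrac{2\rho_{\min}}{\delta t}>C\,M$, so you do not actually establish the \emph{unconditional} uniqueness claimed in the lemma statement; the paper's Lax-Milgram route gets unconditional uniqueness only because it implicitly treats the problem as linear. Second, both your cancellation of the convective term under testing with $\widetilde{\vec{v}}^k$ and the paper's \cref{eqn:trilinear_zero_vel} rely on the skew-symmetrized form of $\rho\,\tvj^k\,\partial\tvi^k/\partial x_j$; with variable $\rho(\phi^{k+1})$ the convective form as literally written does not integrate to zero, so the identification with the skew-symmetric trilinear form should be made explicit. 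Finally, the de Rham argument delivers $\tp^k\in L^2_0(\Omega)$, and the $H^1$ regularity asserted in the lemma requires an additional elliptic-regularity step that neither you nor the paper supplies.
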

Proof of this lemma follows from the generalized Lax-Milgram theorem under a suitable inf-sup condition. 
We again omit the details of this proof for brevity, and instead refer the interested
ready to \citet{Volker2016} for a detailed explanation.
  
\begin{remark}
	In the fully discrete setting one needs to satisfy the discrete inf-sup condition (which is a modified coercivity condition) to prove uniqueness for the fully discrete analog of \cref{eqn:mu_eqn_var_semi_disc}.  However, we use the variational multi-scale technique, described below in \cref{subsec:space_scheme}, which circumvents the need of a discrete inf-sup condition.  The variational multi-scale technique allows for the use of classical Lax-Milgram to prove uniqueness as one can prove classical definition of coercivity directly in this case. 
\end{remark}


We now prove solvability for the full time-discretized Cahn-Hilliard Navier-Stokes system by 
showing that  \cref{eqn:phi_eqn_var_semi_disc} has a solution, $\widetilde{\mu}^k$,
with $\phi^{k+1}$ and $\widetilde{\vec{v}}^k$ understood
to be functions of $\widetilde{\mu}^k$ via \cref{eqn:nav_stokes_var_semi_disc} --  \cref{eqn:mu_eqn_var_semi_disc}.  We aim to show that all the conditions of \cref{thrm:pseudo-monotone} are satisfied;
\cref{thrm:pseudo-monotone} is a generalization of \cref{thrm:browder_minty} for operators that are a summation of a higher order monotone operator and a strongly continuous lower order operator. To this end, it is important to note that proving strong continuity for our lower order operators in \cref{eqn:phi_eqn_var_semi_disc} is difficult.  However, \citet{Liu2011} showed an equivalent condition called local monotonocity which is easier to prove.

\begin{lemma}
	Given an operator $A:X \longrightarrow X^*$, where $X$ is a real, reflexive Banach space, i.e. $H^1(\Omega)$ in our case.
	If $A$ has the following properties: 
	\begin{itemize}
		\item[(H1)] (Hemicontinuity)  The map $t \mapsto \left\langle A(\xi_1 + t\;\xi_2), q \right\rangle$ is continuous on $\mathbb{R}$;
		\item[(H2)] (Local monotonicity) The following inequality
		holds: 
					\begin{equation}
					\left\langle A(\xi_1) + A(\xi_2), \;\xi_1 - \xi_2 \right\rangle_{H^1(\Omega)} \leq \left( C + \upsilon(\xi_1) + \gamma(\xi_2) \right) \norm{\xi_1 - \xi_2}_{H^1(\Omega)}^2,
					\label{eqn:local_monotinicity}
					\end{equation}
			where $\upsilon(\xi_1)$ and $\gamma(\xi_2)$ are bounded, measurable functions in $H^1(\Omega)$;
	\end{itemize}
then $A$ is pseudo-monotone operator. 
\label{lem:pseudo-monotone-equiv}
\end{lemma}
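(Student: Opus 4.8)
The plan is to verify the definition of pseudo-monotonicity used in \cref{thrm:pseudo-monotone} (in the sense of \citet{Zeidler1985IIb}) directly: that $A$ is bounded and that, whenever $\xi_n \rightharpoonup \xi$ weakly in $H^1(\Omega)$ with $\limsup_{n\to\infty}\langle A(\xi_n), \xi_n - \xi\rangle \le 0$, one has $\langle A(\xi), \xi - q\rangle \le \liminf_{n\to\infty}\langle A(\xi_n), \xi_n - q\rangle$ for every $q \in H^1(\Omega)$. The first and decisive observation I would record is that $H^1(\Omega)$ is reflexive and, by the Rellich--Kondrachov theorem, embeds compactly into the pivot space $L^2(\Omega)$; hence the (norm-bounded) weakly convergent sequence $\xi_n \rightharpoonup \xi$ converges strongly in $L^2(\Omega)$, so $\norm{\xi_n - \xi}_{L^2} \to 0$. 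Since the right-hand side of (H2) is controlled by this weaker norm, compactness is exactly the mechanism that will turn local monotonicity into the effective strong continuity of the lower-order terms that the preceding discussion flags as hard to establish by hand.

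Next I would set up a Minty--Browder device built on (H1) and (H2). Fixing $q \in H^1(\Omega)$, I test local monotonicity on the ray $z_t := \xi + t(q - \xi)$, $t \in (0,1]$, i.e. apply (H2) to the pair $(\xi_n, z_t)$. Expanding $\xi_n - z_t = (\xi_n - \xi) - t(q - \xi)$ and letting $n \to \infty$: the weak convergence $\xi_n \rightharpoonup \xi$ disposes of the linear pairings involving the fixed element $A(z_t)$, while the strong $L^2$ convergence gives $\norm{\xi_n - z_t}_{L^2}^2 \to t^2 \norm{q - \xi}_{L^2}^2$, and, because $\upsilon$ and $\gamma$ are bounded, the whole penalty term remains $O(t^2)$. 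After isolating the contributions linear in $t$, dividing by $t$, and sending $t \to 0^+$, the quadratic penalty drops out and hemicontinuity (H1) identifies the remaining limit, $\lim_{t\to 0^+}\langle A(z_t), q - \xi\rangle = \langle A(\xi), q - \xi\rangle$, delivering the desired inequality.

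Two auxiliary facts I would establish before the Minty step. First, from boundedness of $A$ and reflexivity of $H^1(\Omega)$, a subsequence satisfies $A(\xi_n) \rightharpoonup \chi$ in $(H^1(\Omega))^*$; applying (H2) with $\xi_2 = \xi$, the penalty $(C + \upsilon(\xi_n) + \gamma(\xi))\norm{\xi_n - \xi}_{L^2}^2 \to 0$, so together with $\langle A(\xi), \xi_n - \xi\rangle \to 0$ this controls $\langle A(\xi_n), \xi_n - \xi\rangle$ and, with the limsup hypothesis, pins its limit to zero --- which is what lets me rewrite $\liminf_n\langle A(\xi_n), \xi_n - q\rangle$ in terms of the quantity produced by the ray argument. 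Second, boundedness of $A$ on $H^1(\Omega)$ must itself be recorded (it is inherited, in the concrete application to \cref{eqn:phi_eqn_var_semi_disc}, from the polynomial growth of $\psi'$ and the continuous embeddings $H^1(\Omega) \hookrightarrow L^p(\Omega)$), since boundedness is part of the definition of a pseudo-monotone operator.

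The main obstacle is precisely the penalty $(C + \upsilon(\xi_1) + \gamma(\xi_2))\norm{\xi_1 - \xi_2}^2$: unlike genuine monotonicity, (H2) supplies only an upper bound of the ``wrong'' sign, so the classical monotone-implies-pseudo-monotone argument cannot be transcribed verbatim. The resolution is the interplay exploited above --- that the penalty is measured in the weaker $L^2(\Omega)$ norm, so that (i) under the weak-to-strong upgrade furnished by Rellich compactness it vanishes, and (ii) along the Minty ray it is quadratically small in $t$ and hence disappears after dividing by $t$. A secondary subtlety is that $\upsilon$ and $\gamma$ are only bounded and measurable rather than continuous, so every passage to the limit must rely on their boundedness on bounded sets, keeping the estimates uniform in $n$, rather than on any continuity property.
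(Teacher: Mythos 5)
The paper does not actually prove this lemma: it states it and defers entirely to \citet{Liu2011}, remarking only that the proof there is carried out for Banach spaces compactly embedded in Hilbert spaces. Your sketch reconstructs essentially that argument --- Rellich compactness to upgrade $\xi_n \rightharpoonup \xi$ in $H^1(\Omega)$ to $\xi_n \to \xi$ in $L^2(\Omega)$ so that the local-monotonicity penalty vanishes, followed by a Minty ray argument closed by hemicontinuity --- and that is indeed the right mechanism. But note that you silently replace the $\norm{\xi_1-\xi_2}_{H^1(\Omega)}^2$ on the right of (H2) by $\norm{\xi_1-\xi_2}_{L^2(\Omega)}^2$ when you assert that ``the right-hand side of (H2) is controlled by this weaker norm.'' This is not cosmetic: weak convergence in $H^1$ does not give strong convergence in $H^1$, so with the norm as printed the penalty does not vanish along the sequence, and the lemma is in fact false in that form --- the operator $\langle A(u),v\rangle := -(u,v)_{H^1}$ satisfies (H1) and the difference form of (H2) with the $H^1$ norm and $C=0$, yet fails pseudo-monotonicity (test it on an orthonormal sequence $\xi_n=e_n\rightharpoonup 0$). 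The hypothesis must be read with the pivot-space norm, which is exactly the setting of Liu's theorem; your proof should state this correction explicitly rather than assume it.

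The more substantive gap is the direction of the inequality. You correctly flag that (H2) is one-sided, but your proposed resolution --- that the penalty vanishes in the limit and is $O(t^2)$ along the ray --- does not repair the sign by itself: an upper bound $\langle A(\xi_n)-A(\xi),\xi_n-\xi\rangle \le \epsilon_n$ with $\epsilon_n\to 0$ yields only $\limsup_n\langle A(\xi_n),\xi_n-\xi\rangle\le 0$, which is already the hypothesis, and gives no control on the $\liminf$; so the limit is not ``pinned to zero,'' and in the Minty step the inequality points the wrong way to bound $\liminf_n\langle A(\xi_n),\xi_n-q\rangle$ from below. The argument closes only if (H2) is used as a lower bound
\begin{equation*}
\left\langle A(\xi_1)-A(\xi_2),\,\xi_1-\xi_2\right\rangle \;\ge\; -\left(C+\upsilon(\xi_1)+\gamma(\xi_2)\right)\norm{\xi_1-\xi_2}_{L^2(\Omega)}^2,
\end{equation*}
i.e., ``monotone up to a perturbation made compact by the embedding'' (equivalently, the Liu--R\"ockner upper bound for the operator of the opposite sign). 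The ``$+$'' in the printed (H2) is presumably a slip for ``$-$'', and the sign convention must be fixed consistently with the operator $T$ of \cref{defn:operator_defn_exist} before the ray computation goes through; as written, your sketch does not resolve this, so the two steps that depend on it (the identification $\lim_n\langle A(\xi_n),\xi_n-\xi\rangle=0$ and the passage $t\to 0^+$) remain unjustified.
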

\citet{Liu2011} proves \cref{lem:pseudo-monotone-equiv} in a more general setting for Banach spaces which are compactly embedded in Hilbert spaces.  

\begin{definition}
	Consider the following shorthand notation:
	\begin{equation}
	\mu := \tmu^k, \quad
	\phi\left(\mu \right) := \tphi^k \left( \tmu^k \right),
	\quad \text{and} \quad
	v_i\left(\mu \right) := \tvi^k\left(\tmu^k \right). 
	\end{equation}
	Then, given $\phi^{k} \in {H}^1(\Omega)$, from \cref{eqn:phi_eqn_var_semi_disc} we establish the following solution operator:
	\begin{equation}
	\left\langle T(\mu), q \right\rangle :=  
	2 \left(  \phi\left(\mu\right) - \phi^k, \, q \right) -
	\frac{\delta t}{2} \left( v_i\left(\mu \right)  \phi\left(\mu \right), \, \pd{q}{x_i} \right) + \frac{\delta t}{PeCn} \left( {\pd{\mu}{x_i}}, \, \pd{q}{x_i} \right),
	\label{eqn:operator_phi}
	\end{equation}
	for all $q \in H^1(\Omega)$.
	\label{defn:operator_defn_exist}
\end{definition}
\begin{lemma}
	If we assume the solution operator $\phi(\mu)$ to be Lipshitz in $\mu$, then the first term from the operator in \cref{defn:operator_defn_exist}, i.e., 
	$2 \left(  \phi\left(\mu\right) - \phi^k, \, q \right)$ satisfies (H1) and (H2) from \cref{lem:pseudo-monotone-equiv} and is therefore pseudo-monotone.
	\label{lemm:time_pseudo}
\end{lemma}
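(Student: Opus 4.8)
Denote by $A_1$ the zeroth-order operator appearing in \cref{lemm:time_pseudo}, namely $\left\langle A_1(\mu), q \right\rangle := 2\left( \phi(\mu) - \phi^k, \, q \right)$ for all $q \in H^1(\Omega)$. The plan is to verify the two hypotheses (H1) and (H2) of \cref{lem:pseudo-monotone-equiv} directly, drawing every estimate from the single assumed fact that the solution operator $\mu \mapsto \phi(\mu)$ is Lipschitz. Because $A_1$ is the lowest-order piece of the operator $T$ from \cref{defn:operator_defn_exist}, both checks reduce to elementary $L^2$ manipulations once the Lipschitz bound, say with constant $L_\phi$, is in hand.

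For hemicontinuity (H1), fix $\xi_1, \xi_2, q \in H^1(\Omega)$ and consider $g(t) := \left\langle A_1(\xi_1 + t\,\xi_2), q \right\rangle = 2\left( \phi(\xi_1 + t\,\xi_2) - \phi^k, \, q \right)$. First I would show that $t \mapsto \phi(\xi_1 + t\,\xi_2)$ is continuous (indeed Lipschitz) from $\mathbb{R}$ into $L^2(\Omega)$: the Lipschitz hypothesis gives $\norm{\phi(\xi_1 + t\,\xi_2) - \phi(\xi_1 + s\,\xi_2)}_{L^2} \le L_\phi \, \abs{t - s}\, \norm{\xi_2}$. Composing this continuous curve with the bounded linear functional $w \mapsto 2(w - \phi^k, q)$ on $L^2(\Omega)$ then shows $g$ is continuous on $\mathbb{R}$, which is exactly (H1).

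For local monotonicity (H2), I would evaluate the pairing on the difference of arguments, so that the constant $\phi^k$ cancels: $\left\langle A_1(\xi_1) - A_1(\xi_2), \, \xi_1 - \xi_2 \right\rangle = 2\left( \phi(\xi_1) - \phi(\xi_2), \, \xi_1 - \xi_2 \right)$. Applying the Cauchy--Schwarz inequality in $L^2$, then the Lipschitz bound, and finally the trivial domination $\norm{\cdot}_{L^2} \le \norm{\cdot}_{H^1(\Omega)}$, yields $2\left( \phi(\xi_1) - \phi(\xi_2), \, \xi_1 - \xi_2 \right) \le 2 L_\phi \norm{\xi_1 - \xi_2}_{L^2}^2 \le 2 L_\phi \norm{\xi_1 - \xi_2}_{H^1(\Omega)}^2$. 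This matches \cref{eqn:local_monotinicity} with the constant $C = 2 L_\phi$ and with the bounded measurable functions taken to be $\upsilon \equiv \gamma \equiv 0$, which are admissible choices; hence (H2) holds and, by \cref{lem:pseudo-monotone-equiv}, $A_1$ is pseudo-monotone.

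The step requiring the most care is the bookkeeping of norms: the Lipschitz hypothesis is naturally an $L^2$ statement, whereas the right-hand side of (H2) carries the full $H^1(\Omega)$ norm, so one must check that each estimate is applied in the correct space and that the $H^1$ norm dominates the $L^2$ norm in the direction that makes the argument close (it does). Beyond this, there is no genuine analytic obstacle: the substantive difficulty has been deferred into the hypothesis that $\phi(\mu)$ is Lipschitz, and the remaining advective term $\tfrac{\delta t}{2}\left( v_i(\mu)\,\phi(\mu), \, \pd{q}{x_i} \right)$ together with the higher-order diffusion term of $T$ are handled separately in the subsequent analysis.
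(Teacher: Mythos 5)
Your proposal is correct, but for the local monotonicity check it takes a genuinely more elementary route than the paper. The paper does not apply Cauchy--Schwarz to $2\left(\phi(\xi_1)-\phi(\xi_2),\,\xi_1-\xi_2\right)$ directly; instead it substitutes the chemical potential relation \cref{eqn:mu_eqn_var_semi_disc} to rewrite $\xi_1-\xi_2$ in terms of $\phi(\xi_1)-\phi(\xi_2)$, which splits the pairing into a cubic difference $\left(\phi^3(\xi_1)-\phi^3(\xi_2),\,\phi(\xi_1)-\phi(\xi_2)\right)$, a negative quadratic term, and a gradient term $Cn^2\norm{\nabla\phi(\xi_1)-\nabla\phi(\xi_2)}_{L^2}^2$; the cubic piece is then controlled by H\"older's inequality, producing the $L^4$ and $L^6$ norms of $\phi(\xi_1)$ and $\phi(\xi_2)$ that play the role of the bounded measurable functions $\upsilon(\xi_1)$ and $\gamma(\xi_2)$ in \cref{eqn:local_monotinicity}, with the Lipschitz constant entering quadratically. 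Your argument --- Cauchy--Schwarz in $L^2$, then the Lipschitz bound, then $\norm{\cdot}_{L^2}\le\norm{\cdot}_{H^1}$ --- reaches the same conclusion with $\upsilon\equiv\gamma\equiv 0$ and $C=2L_\phi$, and is perfectly admissible under the stated hypothesis; it is shorter precisely because it pushes the entire burden onto the assumed Lipschitz continuity of $\phi(\mu)$. What the paper's longer computation buys is an explicit accounting of how the bound depends on the nonlinearity $\psi'$ and on norms of the solutions themselves, which is the form of estimate needed if one wants to weaken the global Lipschitz hypothesis to something local (and it mirrors the estimates in the cited work of Liu). Your hemicontinuity argument coincides with the paper's in substance. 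One small point: you silently replaced the sum $A(\xi_1)+A(\xi_2)$ appearing in the statement of (H2) by the difference $A(\xi_1)-A(\xi_2)$; this is the standard form of local monotonicity and is what the paper's own computation uses, so the correction is appropriate, but it is worth flagging rather than making tacitly.
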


\begin{proof}
	(H1) follows from the continuity of the solution operator from \cref{lemma:mu_unique}. We proceed to check local monotonicity. 
	Using the Lipschitz continuity of $\phi$ and standard
	inequality, one can show that
	\begin{equation}
	\begin{split}
	2 \left(  \phi\left(\xi_1\right) - \phi\left(\xi_2\right) , \, \xi_1 - \xi_2 \right) &\leq 
	2 L^2 \norm{ \xi_1 - \xi_2 }_{H^1}^2
	\biggl[ \norm{\phi\left(\xi_1\right)}_{L^4}^4 + \norm{\phi\left(\xi_2\right)}_{L^4}^4\\
	&+ 2\norm{\phi\left(\xi_1\right)}_{L^6}^3\norm{\phi\left(\xi_2\right)}_{L^2} + 3\norm{\phi\left(\xi_1\right)}_{L^4}^2\norm{\phi\left(\xi_2\right)}_{L^4}^2 \\
	&+ 2\norm{\phi\left(\xi_2\right)}_{L^6}^3\norm{\phi\left(\xi_1\right)}_{L^2} + \frac{1}{2}
	\left( 1 + Cn^2 \right) \biggr],
	\end{split}
	\end{equation}
	where $L$ is the Lipschitz constant for $\phi$.
	This proves local monotonicity (H2).  In the interest of brevity we do not provide the detailed steps here. We refer interested readers to \citet{Liu2011} which presents similar estimates.
\end{proof}

\begin{lemma}
	If we assume the solution operators $\phi(\mu)$ 
	and $\vec{v}(\mu)$ to be Lipshitz in $\mu$, then the second term from the operator in \cref{defn:operator_defn_exist}, i.e., 
	$\frac{\delta t}{2} \left( v_i\left(\mu \right)  \phi\left(\mu \right), \, \pd{q}{x_i} \right)$ satisfies (H1) and (H2) from \cref{lem:pseudo-monotone-equiv} and is therefore pseudo-monotone.
\label{lemm:advect_pseudo}
\end{lemma}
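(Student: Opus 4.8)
The plan is to mirror the argument used for \cref{lemm:time_pseudo}, now applied to the trilinear advective contribution. Writing $\left\langle A(\mu),q\right\rangle := -\frac{\delta t}{2}\left(v_i(\mu)\,\phi(\mu),\,\pd{q}{x_i}\right)$ for the operator in question, hemicontinuity (H1) follows directly from the assumed Lipschitz continuity of the solution operators $\phi(\mu)$ and $v_i(\mu)$: as $t$ ranges over $\mathbb{R}$ the argument $\xi_1 + t\,\xi_2$ varies continuously, hence so do $\phi(\xi_1+t\,\xi_2)$ and $v_i(\xi_1+t\,\xi_2)$ in $H^1(\Omega)$; since the map $(v_i,\phi)\mapsto\left(v_i\,\phi,\,\pd{q}{x_i}\right)$ is continuous for fixed $q\in H^1(\Omega)$, the composite $t\mapsto\left\langle A(\xi_1+t\,\xi_2),q\right\rangle$ is continuous.

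For local monotonicity (H2) I would evaluate the operator difference against the test function $q = \xi_1-\xi_2$, giving
\begin{equation*}
\left\langle A(\xi_1)-A(\xi_2),\,\xi_1-\xi_2\right\rangle = -\frac{\delta t}{2}\left(v_i(\xi_1)\,\phi(\xi_1)-v_i(\xi_2)\,\phi(\xi_2),\,\pd{(\xi_1-\xi_2)}{x_i}\right).
\end{equation*}
The key algebraic step is to add and subtract the cross term $v_i(\xi_1)\,\phi(\xi_2)$ inside the first slot, splitting the flux difference as
\begin{equation*}
v_i(\xi_1)\,\phi(\xi_1)-v_i(\xi_2)\,\phi(\xi_2) = v_i(\xi_1)\bigl(\phi(\xi_1)-\phi(\xi_2)\bigr) + \bigl(v_i(\xi_1)-v_i(\xi_2)\bigr)\phi(\xi_2),
\end{equation*}
so that each piece carries exactly one difference factor that can be controlled through Lipschitz continuity, while the undifferenced factor will supply the coefficients $\upsilon,\gamma$.

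Next I would bound each resulting term by H\"older's inequality with exponents adapted to the Sobolev embedding $H^1(\Omega)\hookrightarrow L^6(\Omega)$, valid for $n\le 3$. In the first piece I would place $v_i(\xi_1)$ in $L^3$, the Lipschitz factor $\phi(\xi_1)-\phi(\xi_2)$ in $L^6$, and the gradient $\pd{(\xi_1-\xi_2)}{x_i}$ in $L^2$, since $\tfrac13+\tfrac16+\tfrac12=1$. Using $\norm{\phi(\xi_1)-\phi(\xi_2)}_{L^6}\le C\,L\,\norm{\xi_1-\xi_2}_{H^1}$ from Lipschitz continuity together with the embedding, and $\norm{\pd{(\xi_1-\xi_2)}{x_i}}_{L^2}\le\norm{\xi_1-\xi_2}_{H^1}$, this piece is controlled by $C\,L\,\norm{v(\xi_1)}_{L^3}\,\norm{\xi_1-\xi_2}_{H^1}^2$. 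The second piece is handled symmetrically, putting $v_i(\xi_1)-v_i(\xi_2)$ in $L^6$ and $\phi(\xi_2)$ in $L^3$, yielding a bound $C\,L\,\norm{\phi(\xi_2)}_{L^3}\,\norm{\xi_1-\xi_2}_{H^1}^2$. Taking absolute values throughout, I would collect these into the form required by \cref{lem:pseudo-monotone-equiv} with $\upsilon(\xi_1):=C\,L\,\delta t\,\norm{v(\xi_1)}_{L^3}$ and $\gamma(\xi_2):=C\,L\,\delta t\,\norm{\phi(\xi_2)}_{L^3}$; these are finite, hence bounded and measurable, because $v(\xi_1),\phi(\xi_2)\in H^1(\Omega)\hookrightarrow L^3(\Omega)$.

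The main obstacle is the trilinear structure combined with the gradient factor $\pd{(\xi_1-\xi_2)}{x_i}$ living only in $L^2$: unlike the purely reactive first term treated in \cref{lemm:time_pseudo}, the integrability budget here is tight, and in the borderline case $n=3$ the H\"older allocation closes only because the critical embedding $H^1\hookrightarrow L^6$ is available and because the solution operators genuinely map into $H^1(\Omega)$ (by \cref{lemma:mu_unique} and \cref{lemma:v_unique}). Securing that $\upsilon$ and $\gamma$ depend on the single arguments $\xi_1$ and $\xi_2$ separately, rather than jointly, is precisely what the add-and-subtract splitting guarantees, and is what permits invoking the local-monotonicity criterion of \citet{Liu2011}.
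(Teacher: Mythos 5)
Your proposal is correct and follows the same overall strategy as the paper's proof: hemicontinuity from continuity of the solution operators, then the add-and-subtract splitting $v_i(\xi_1)\phi(\xi_1)-v_i(\xi_2)\phi(\xi_2)=v_i(\xi_1)\bigl(\phi(\xi_1)-\phi(\xi_2)\bigr)+\bigl(v_i(\xi_1)-v_i(\xi_2)\bigr)\phi(\xi_2)$ so that each piece carries exactly one Lipschitz-controllable difference, tested against $q=\xi_1-\xi_2$. The one substantive difference is the H\"older allocation. The paper places the undifferenced coefficient functions in $L^\infty$ and arrives at the bound $L\,\norm{\xi_1-\xi_2}_{H^1}^2\bigl[\norm{\vec{v}(\xi_1)}_{L^\infty}+\norm{\phi(\xi_2)}_{L^\infty}\bigr]$, which implicitly requires $\vec{v}(\xi_1)$ and $\phi(\xi_2)$ to be essentially bounded --- a property not supplied by $H^1$ regularity alone in dimensions two and three. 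Your $L^3$--$L^6$--$L^2$ split with the embedding $H^1(\Omega)\hookrightarrow L^6(\Omega)$ closes the integrability budget using only the $H^1$ membership guaranteed by \cref{lemma:mu_unique} and \cref{lemma:v_unique}, and still produces coefficients $\upsilon(\xi_1)$, $\gamma(\xi_2)$ of the single-argument form demanded by \cref{lem:pseudo-monotone-equiv}. In that sense your version is slightly sharper in its hypotheses; the paper's version is simpler to state but leans on an unstated boundedness assumption (or on the physically motivated saturation of $\phi$). Both establish (H2), and the conclusion of pseudo-monotonicity follows identically.
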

\begin{proof}
	(H1) follows from the continuity of the solution operators from \cref{lemma:mu_unique} and \cref{lemma:v_unique}. We proceed to check local monotonicity. 
	Using Lipshitz continuity and standard inequalities
	one can show that
	\begin{align}
	\begin{split}
	\left( v_i\left(\xi_1 \right)  \phi\left(\xi_1 \right) -  v_i\left(\xi_2 \right)  \phi\left(\xi_2 \right), \, \pd{\left(\xi_1 - \xi_2\right)}{x_i} \right)
	& \leq L\norm{\left(\xi_1 - \xi_2\right) }_{H^1}^2 \biggl[ \norm{ \vec{v}\left(\xi_1 \right) }_{L^\infty} + \norm{ \phi\left(\xi_2 \right) }_{L^\infty}  \biggr],
	\end{split}
	\end{align}
	where $L$ is the maximum Lipschitz constant for $\phi$
	and $\vec{v}$.
	This proves local monotonicity (H2).
\end{proof}
 
\begin{theorem}[Solvability of \eqref{eqn:phi_eqn_var_semi_disc}]
       Given $\phi^{k} \in {H}^1(\Omega)$, there exists a solution, $\mu \in H^1(\Omega)$, to \cref{eqn:phi_eqn_var_semi_disc} in the sense of the solution operator defined in \cref{defn:operator_defn_exist} that satisfies the conditions of the existence theorem of pseudo-monotone operators (see \cref{thrm:pseudo-monotone}
       above).		
\end{theorem}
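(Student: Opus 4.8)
The plan is to verify the four hypotheses of the Brezis theorem (\cref{thrm:pseudo-monotone})---pseudo-monotonicity, coercivity, continuity, and boundedness---for the operator $T$ introduced in \cref{defn:operator_defn_exist}, after which existence of a solution $\mu \in H^1(\Omega)$ to $\langle T(\mu),q\rangle = 0$ for all $q \in H^1(\Omega)$ (equivalently, \cref{eqn:phi_eqn_var_semi_disc}) follows at once. I would split $T = T_1 + T_2 + T_3$, where $T_1$ is the zeroth-order term $2\left(\phi(\mu)-\phi^k,\,\cdot\,\right)$, $T_2$ is the advective term $-\tfrac{\delta t}{2}\left(v_i(\mu)\phi(\mu),\,\pd{(\cdot)}{x_i}\right)$, and $T_3$ is the linear diffusion term $\tfrac{\delta t}{PeCn}\left(\pd{\mu}{x_i},\,\pd{(\cdot)}{x_i}\right)$.

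First I would dispatch pseudo-monotonicity. The pieces $T_1$ and $T_2$ are already shown to be pseudo-monotone in \cref{lemm:time_pseudo} and \cref{lemm:advect_pseudo}, under the standing assumption that the solution operators $\phi(\mu)$ and $\vec{v}(\mu)$ furnished by \cref{lemma:mu_unique} and \cref{lemma:v_unique} are Lipschitz in $\mu$. The term $T_3$ is a bounded, linear, symmetric operator with $\langle T_3\mu,\mu\rangle = \tfrac{\delta t}{PeCn}\norm{\nabla\mu}_{L^2}^2 \ge 0$, hence monotone and therefore pseudo-monotone. Since a finite sum of pseudo-monotone operators is pseudo-monotone, $T$ is pseudo-monotone. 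Continuity of $T_1$ and $T_2$ follows from the continuity of the solution operators, and of $T_3$ from linearity; boundedness of each piece---mapping bounded sets of $H^1(\Omega)$ into bounded sets of $H^1(\Omega)^*$---follows by combining the boundedness of $\phi(\mu)$ and $\vec{v}(\mu)$ with the Sobolev embeddings $H^1(\Omega)\hookrightarrow L^4(\Omega)$ (and $L^6(\Omega)$ in three dimensions) to control the nonlinear products, exactly as in the estimates already carried out in the two preceding lemmas.

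The crux, and the step I expect to be the main obstacle, is coercivity: showing $\langle T(\mu),\mu\rangle/\norm{\mu}_{H^1(\Omega)} \to +\infty$ as $\norm{\mu}_{H^1(\Omega)} \to \infty$. Testing with $q=\mu$ gives
\[
\langle T(\mu),\mu\rangle = 2\left(\phi(\mu)-\phi^k,\mu\right) - \frac{\delta t}{2}\left(v_i(\mu)\phi(\mu),\pd{\mu}{x_i}\right) + \frac{\delta t}{PeCn}\norm{\nabla\mu}_{L^2}^2.
\]
The diffusion term controls only the $H^1$ seminorm, and because $\mu$ carries homogeneous Neumann data this seminorm degenerates on the constant mode; moreover the advective term has no definite sign. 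I would handle both difficulties through the first term: testing the defining relation \cref{eqn:mu_eqn_var_semi_disc} for $\phi(\mu)$ against $\phi(\mu)$ itself yields the identity $\left(\phi(\mu),\mu\right) = \norm{\phi(\mu)}_{L^4}^4 - \norm{\phi(\mu)}_{L^2}^2 + Cn^2\norm{\nabla\phi(\mu)}_{L^2}^2$, whose quartic term supplies coercive growth and, crucially, couples the constant part of $\mu$ to $\phi(\mu)$ (since $\psi'$ is nonlinear, shifting $\mu$ by a constant genuinely changes $\phi(\mu)$, so the degenerate constant mode is not in the kernel of $T$). I would then absorb the indefinite advective term together with the lower-order $-\norm{\phi(\mu)}_{L^2}^2$ and $-\left(\phi^k,\mu\right)$ contributions into the quartic and gradient terms via Young's and H\"older's inequalities, using the Lipschitz and boundedness bounds on $\vec{v}(\mu)$ and $\phi(\mu)$.

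The delicate point is quantifying how the $L^4$-growth of $\phi(\mu)$ translates into growth in the full $\norm{\mu}_{H^1(\Omega)}$ norm---in particular, establishing a lower bound on $\norm{\phi(\mu)}$ in terms of $\norm{\mu}_{H^1(\Omega)}$ that dominates the at-most-linear-in-$\mu$ advective losses. Once coercivity is secured, all four hypotheses of \cref{thrm:pseudo-monotone} hold, and the theorem delivers a solution $\mu \in H^1(\Omega)$ of \cref{eqn:phi_eqn_var_semi_disc}, with $\tphi^k$, $\widetilde{\vec{v}}^k$, and $\tp^k$ recovered through the solution operators of \cref{lemma:mu_unique} and \cref{lemma:v_unique}, completing the solvability argument.
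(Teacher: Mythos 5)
Your plan coincides with the paper's proof in its architecture: the same three-way splitting of $T$, pseudo-monotonicity of the first two pieces via \cref{lemm:time_pseudo} and \cref{lemm:advect_pseudo} plus monotonicity of the linear diffusion piece and the sum rule for pseudo-monotone operators, continuity and boundedness from the solution operators of \cref{lemma:mu_unique} and \cref{lemma:v_unique}, and coercivity as the decisive step. You diverge from the paper only inside the coercivity estimate, in two places. First, you test \cref{eqn:mu_eqn_var_semi_disc} with $q=\phi(\mu)$, whereas the paper tests with $q=\phi^{k+1}-\phi^k$; since $2\bigl(\phi(\mu)-\phi^k\bigr)=\phi^{k+1}-\phi^k$, the paper's choice reproduces the first term of $T$ exactly and leaves no residue, while yours leaves an extra $-2\bigl(\phi^k,\mu\bigr)$, linear in $\mu$, that must also be absorbed; both routes ultimately lean on the same quartic term $\norm{\phi(\mu)}_{L^4}^4$ and the same Poincar\'e/mean-value-of-$\mu$ device (which the paper defers to Han et al.) to pass from $\norm{\nabla\mu}_{L^2}$ to $\norm{\mu}_{H^1}$, so this difference is cosmetic. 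Second, and more substantively, you handle the advective term by H\"older and Young, absorbing $\epsilon\norm{\nabla\mu}_{L^2}^2$ into the diffusion term; the paper instead substitutes the energy identity of \cref{lem:advection_estimate}, which rewrites $-\delta t\left(v_i\phi,\pd{\mu}{x_i}\right)$ as sign-controlled kinetic-energy differences plus viscous dissipation. The paper's device is the safer one: your absorption leaves a remainder of the form $C\norm{\vec{v}(\mu)}_{L^4}^2\norm{\phi(\mu)}_{L^4}^2$, and dominating it by the coercive terms requires a quantitative bound on how $\norm{\vec{v}(\mu)}$ grows with $\norm{\mu}_{H^1}$, which \cref{lemma:v_unique} does not supply. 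You correctly flag this as the delicate point, but as written it is the one step that would need the paper's energy-identity substitution (or an equivalent a priori velocity bound) to actually close.
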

\begin{proof}
%
%
%
	%
	%
	%
	We now proceed by proving that $T(\mu)$ satisfies the conditions of the Browder-Minty theorem.
	\begin{enumerate}
	\item {\bf Continuous and bounded.} We compute the absolute value of
	\cref{eqn:operator_phi} and use standard inequalities:
	\begin{equation}
	\begin{split}
	\left|\left\langle T(\mu), \, q \right\rangle\right| &\leq C_1\norm{q}_{L^2}\left(\norm{\phi(\mu)}_{L^2} +  \norm{\phi^{k}}_{L^2}\right) \\ &+ 
	C_2\delta t\norm{\nabla {q}}_{L^2}\norm{\vec{v}(\mu)}_{L^2}\norm{\phi(\mu)}_{L^2}  + 
	\frac{C_3 \delta t}{PeCn} \norm{\nabla {q}}_{L^2}\norm{\nabla {\mu}}_{L^2}.
	\end{split}
	\end{equation}
	Using the fact that $\norm{\cdot}_{L^2} \le \norm{\cdot}_{H^1}$ and combining all
	the constants yields:
	\begin{equation}
	\left|\left\langle T(\mu), \, q \right\rangle\right| \leq C(\delta t) \,  
	\norm{q}_{H^1} \, \left[ \norm{\phi^{k}}_{L^2} + \bigl(1 + \norm{\vec{v}(\mu)}_{L^2} \bigr) \norm{\phi(\mu)}_{L^2}   + \norm{\nabla \mu}_{L^2}\right].
	\end{equation}
	Therefore, operator $T$ is bounded as a consequence of the boundedness of $\phi(\mu)$ 
	and $\vec{v}(\mu)$ from \cref{lemma:mu_unique} and \cref{lemma:v_unique}, respectively.
	Continuity of $T$ follows from a similar argument.
	\item {\bf Pseudo-monotonocity.}
	We begin with the following expression:
	\begin{equation}
	\begin{split}
	\left\langle T(\xi_1) - T(\xi_2), \xi_1 - \xi_2 \right\rangle &=  
	2\left( \phi\left(\xi_1\right) - \phi\left(\xi_2\right), \, \xi_1 - \xi_2 \right) \\ 
	&- \frac{\delta t}{2} \left( \vec{v}\left(\xi_1 \right) \phi\left(\xi_1 \right) - 
	\vec{v}\left(\xi_2 \right) \phi\left(\xi_2 \right), \, \nabla \left(\xi_1 - \xi_2\right) \right) \\
	&+ \frac{\delta t}{PeCn} \bigl\| \nabla \left(\xi_1 - \xi_2\right) \bigr\|^2_{L^2},
	\end{split}
	\end{equation}
	for all $\xi_1, \xi_2 \in H^1(\Omega)$.
	Note that the third term on the right hand side is strictly monotone.  The first and second term on the right hand side is shown to be pseudo-monotone in \cref{lemm:time_pseudo} and \cref{lemm:advect_pseudo} respectively. Further, the summation of a pseudo-monotone operator and a monotone operator is also pseudo-monotone (Proposition 27.6 of \citet{Zeidler1985IIb}). 	
	Which implies $T$ is a pseudo-monotone operator. 	
	\item {\bf Coercivity.}: Coercivity in this context is written as 
	\begin{equation}
	\frac{\left\langle T(u),u \right\rangle}{\norm{u}_{H^1}} \longrightarrow \infty, \;\; as \;\;\norm{u}_{H^1} \longrightarrow \infty.
	\end{equation}
	We check whether this condition is satisfied. 
	We start from  		 
	\begin{equation}
	\left\langle T(\mu), \mu \right\rangle = 2\left(\left(\phi(\mu) - \phi^k\right), \mu\right) - \frac{\delta t}{2}\left( v_i(\mu) \phi(\mu), \pd{\mu}{x_i}\right) + \frac{\delta t}{PeCn} \left(\pd{\mu}{x_i},\pd{\mu}{x_i}  \right).
	\end{equation}
	The first term can be bounded by taking the test function $q = \phi^{k+1} - \phi^k$ in \cref{eqn:mu_eqn_var_semi_disc} and using the fact that $\phi^k$, $\phi^{k+1}$ are strictly bounded between $-1$ and $1$:
	\begin{align}
	\begin{split}
	\left( \phi^{k+1} - \phi^k, \mu\right) &\geq 
	\frac{1}{8}\int_{\Omega}(\phi^{k+1})^4 \mathrm{d}x - \frac{3}{4}\int_{\Omega}(\phi^{k+1})^2 \mathrm{d}x + \frac{Cn^2}{2}\int_{\Omega}\sum_i \left|\pd{\phi^{k+1}}{x_i}\right|^2 \mathrm{d}x \\
	& \quad + \frac{3}{4}\int_{\Omega}(\phi^{k})^2 \mathrm{d}x + \frac{1}{8}\int_{\Omega}(\phi^{k})^4 + \mathrm{d}x - \frac{Cn^2}{2}\int_{\Omega}\sum_i \left|\pd{\phi^{k}}{x_i}\right|^2 \mathrm{d}x\\ 
	&\geq \frac{1}{16}\norm{\phi^{k+1}}_{L^4}^4 + \frac{Cn^2}{2}\norm{\nabla\phi^{k+1}}_{L^2}^2 - C\left(\norm{\phi^k}^4_{H^1} + \norm{{\phi}^k}^2_{H^1} + 1\right).
	\end{split}
	\end{align}
	Using the estimate from \cref{lem:advection_estimate}, 
	the second term on the right can be written as
	\begin{equation}
	\begin{split}
	-\delta t\left( v_i \phi, \pd{\mu}{x_i}\right) \geq C_3\biggl[\norm{\vec{v}^{k+1}}^2_{L^2} - \norm{\vec{v}^k}^2_{L^2} + \norm{\sqrt{\eta(\phi^{k+1})} \; \nabla\vec{v}}^2_{L^2}   
	 +   \left(y,  \, \rho^{k+1} - \rho^k \right)  \biggr].
	\end{split} 
	\end{equation}
	Collecting all inequalities, we get
	\begin{align}
	\begin{split}
	\left\langle T(\mu), \mu \right\rangle &\geq 
	 C \norm{\nabla\mu}_{L^2}^2 + \frac{1}{16} \norm{\phi^{k+1}}_{L^4}^4 + \frac{Cn^2}{2}\norm{\nabla\phi^{k+1}}_{L^2}^2 + C_5\norm{\vec{v}^{k+1}}^2_{L^2}\\
	&\quad + C_3\left(\norm{\sqrt{\eta(\phi^{k+1})} \; \nabla\vec{v}}^2_{L^2} + \left(y,  \,\rho\left(\phi^{k+1}\right)\right)\right) -  C_b.
	\end{split}
	\label{eq:coercivity_grad_mu}
	\end{align} 	
	Note, that the all the terms which depend on $\mu$ in \cref{eq:coercivity_grad_mu} are positive, and all the fields at the previous timestep (known functions) are constant, and are absorbed in $C_b$.  To show coercivity we need to use Poincar\'e inequality to get the inequality in terms of $\mu$ instead of $\nabla\mu$; thus, 
	\begin{equation}
	\left\langle T(\mu), \mu \right\rangle \geq C_a \norm{\mu}_{H^1}^{n} - C_b,
	\label{eq:coercivity_mu}
	\end{equation}
	where the constants are adjusted for the inequalities used and $n = 4/3$ which implied coercivity.  Here we do not write other positive terms ($\frac{Cn^2}{2}\norm{\nabla\phi^{k+1}}_{L^2}^2 + C_5\norm{\vec{v}^{k+1}}^2_{L^2}$.. so on) which are $\mu$ dependent to show the inequality in a clearer form, as they are positive and the coercivity condition is still satisfied with them included.  
\end{enumerate}
	\begin{remark}
		To go from \cref{eq:coercivity_grad_mu} to \cref{eq:coercivity_mu}, we use a particular form of the Poincar\'e inequality for $\mu$ given in \citep{Oden2012}:
		\begin{align}
		\begin{split}
		& \norm{f}_{H^1(\Omega)}^2 \leq \left(C^{\prime} + 1\right)  \norm{\nabla f}_{L^2(\Omega)}^2  + C^{\prime} \left(\int_{\Omega} f \, d\vec{x}\right)^2,
		\end{split}
		\label{eqn:poincare}
		\end{align}
	 $\forall f \in  H^1(\Omega)$. To use this inequality for $\mu$, one needs to bound the average $\int_{\Omega} \mu \, d\vec{x}$ and use it in conjunction with \cref{eq:coercivity_grad_mu}.  Using standard inequalities one can show that $\norm{\phi^{k+1}}_{L^4}^4$ bounds $\left(\int_{\Omega} \mu \, d\vec{x}\right)^{4/3}$.  Therefore, the second term in \cref{eq:coercivity_grad_mu} can be used to replace the average term in \cref{eqn:poincare}, which gives the exponent n = 4/3 for $\norm{\mu}_{H^1}^{n}$.  The proof is similar to that shown in \citep{Han2015}, so we do not reproduce it in detail here. 
	\end{remark}
	
	We proved all the required conditions for existence theorem for pseudo-monotone operators, this implies that there exists a solution $\mu^{\prime}$ such that $\left\langle T(\mu^{\prime}), w \right\rangle = 0$, $\forall w \in H^1(\Omega)$. Consequently, there $\tmu^{k}$ is a solution to \cref{eqn:operator_phi}. The same $\tmu^k(\vec{x})$ is the source function for the mapping in \cref{lemma:mu_unique,lemma:v_unique} which provide unique solutions $\phi^{k+1}(\vec{x})$, and $\vec{v}^{k+1}(\vec{x})$ respectively. 
\end{proof}

\begin{remark}
	Contrary to strictly monotone operator in \citet{Han2015}, the operator for our fully implicit time-scheme is pseudo-monotone. The strictly monotone operator along with the Browder-Minty theorem (\cref{thrm:browder_minty}) gives uniqueness of solutions.  In our case we are using a generalization of Browder-Minty theorem to pseudo-monotone operators which only gives existence of solutions.  Rigorously proving uniqueness in our case is not trivial, but for practical situations we do not see any problems. 
\end{remark}

\subsection{Spatial discretization and variational multi-scale approach}
\label{subsec:space_scheme}
In this work all the unknown variables, ($\phi$, $\mu$, $\vec{v}$, and $p$), are discretized in space using the standard piecewise linear continuous Galerkin or cG(1) finite element method.
 It is  well-known in the literature that numerical instabilities occur when  solving
the incompressible Navier-Stokes with a numerical method that uses the same polynomial order for
both the velocity and the pressure. These instabilities are due to fact that equal polynomial order
representations of $p$ and $\vec{v}$ will not satisfy the inf-sup condition (Ladyzhenskaya-Babuska-Brezzi condition, see page 31 in \citet{Volker2016}). In order to overcome this difficulty, additional numerical
stabilization needs to be introduced. One of the most popular stabilization techniques for this problem is the SUPG-PSPG approach: 
streamline-upwind/Petrov-Galerkin (SUPG) \cite{article:BrooksHughes1982} and pressure-stabilizing/Petrov-Galerkin (PSPG) \cite{article:TezMitRayShi92}.  A generalization of these approaches is the variational multi-scale approach \citep{hughes2018multiscale}. In this work we make use of the variational multi-scale (VMS) approach proposed in the context of large-eddy simulations (LES) \citep{Hughes1995}.  This approach has the advantage that it provides a stabilization mechanism such that the inf-sup stability condition is converted to a coercivity condition, while also providing a natural leeway into modeling high Reynolds number flows in the context of LES \citep{Bazilevs2007}. 

The philosophy of VMS models follows that of LES, where we seek a direct-sum decomposition of the discrete spaces which approximate the continuous spaces.  If
 $\vec{v} \in \vec{V}$ and $p \in Q$, then we can decompose these spaces as follows: 
\begin{eqnarray}
\vec{V} = \overline{\vec{V}} \oplus \vec{V}^\prime \qquad \text{and} \qquad Q = \overline{Q} \oplus Q^\prime,
\end{eqnarray}
where $\overline{\vec{V}}$ and $\overline{Q}$ are the cG(1) subspaces of $\vec{V}$ and $Q$, respectively, and the primed versions are the complements of the cG(1) subspaces in $\vec{V}$ and $Q$, respectively.
We can write the decomposition for velocity and pressure as follows: $\vec{v} = 
\overline{\vec{v}} + \vec{v}^{\prime}$ and $p = \overline{p} + p^{\prime}$, 
where the {\it coarse scale} solution is $\overline{\vec{v}} \in \overline{\vec{V}}$, $\overline{p}\in \overline{Q}$, and the {\it fine scale} solution is $\vec{v}^{\prime} \in \vec{V}^\prime$ and $p^{\prime} \in Q^\prime$.  We define a projection operator, $\mathscr{P}:\vec{V} \rightarrow \overline{\vec{V}}$, such that
 $\overline{\vec{v}} = \mathscr{P}\{\vec{v}\}$ and $\vec{v}^\prime = \vec{v} - \mathscr{P}\{\vec{v}\}$.  A similar operator can be used for the decomposition of $p$.  Substituting this decomposition in the original variational form in definition \ref{def:variational_form_sem_disc} yields:
\begin{align}	
\begin{split}	
		\text{Momentum Eqns:}& \quad \left(w_i,\rho(\phi)\pd{ \overline{v_i}}{t}\right) + \left(w_i,
		\pd{\left(\rho(\phi) v_i^{\prime}\right)}{t}\right) +  \left(w_i,\rho(\phi)\overline{v_j}\pd{\overline{v_i}}{x_j}\right) \\ & + \left(w_i,\rho(\phi)v_j^\prime\pd{\overline{v_i}}{x_j}\right) +
		\left(w_i,\pd{\left(\rho(\phi)\overline{v_j}v_i^\prime\right)}{x_j}\right) + \left(w_i,\pd{\left(\rho(\phi)v_j^\prime v_i^\prime\right)}{x_j}\right) \\ & +  
		\frac{1}{Pe}\left(w_i, J_j\pd{\overline{v_i}}{x_j}\right) + \frac{1}{Pe}\left(w_i, \pd{\left(J_j v_i^\prime\right)}{x_j}\right) + \frac{Cn}{We} \left(w_i,\pd{}{x_j}\left({\pd{\phi}{x_i}\pd{\phi}{x_j}}\right)\right)  \\
		& + \frac{1}{We}\left(w_i,\pd{\left(\overline{p} + p^{\prime}\right)}{x_i}\right) + \frac{1}{Re}\left(\pd{w_i}{x_k},\eta(\phi)\pd{\left(\overline{v_i} + v_i^{\prime}\right)}{x_k}\right) - \left(\frac{w_i,\rho(\phi)\hat{g_i}}{Fr}\right)   \\
		&+  \left(q,\pd{\overline{v_i}}{x_i}\right) + \left(q,\pd{v_i^\prime}{x_i}\right)= 0, \label{eqn:weak_VMS_ns}
		\end{split}\\
		\text{Cahn-Hilliard Eqn:}& \quad \left(q, \pd{\phi}{t}\right) + \left(q,\pd{\left(\overline{v_i}\phi\right)}{x_i}\right)
		- \frac{1}{PeCn} \left(q, \frac{\partial^2 \left(m(\phi)\mu\right)}{\partial x_i \partial x_i}\right) = 0, \label{eqn:phi_eqn_var_decom}\\
		\text{Chemical Potential:}& \quad -\left(q,\mu\right) + \left(q, \d{\psi}{\phi}\right) - Cn^2 \left(q, \pd{}{x_i}\left({\pd{\phi}{x_i}}\right)\right)   = 0, \label{eqn:mu_eqn_var_decom}
\end{align} 
where $\vec{w},\vec{\overline{v}}, \in \mathscr{P}\vec{H}^{1}(\Omega)$, $\overline{p},\phi \in \mathscr{P}H^1(\Omega), \vec{v}^\prime \in (\mathscr{I} - \mathscr{P})\vec{H}^{1}(\Omega)$,
$p^\prime \in (\mathscr{I} - \mathscr{P})H^1(\Omega)$, and $\mu, q \in \mathscr{P}H^1(\Omega)$.
Here $\mathscr{I}$ is the identity operator and $\mathscr{P}$ is the projection operator.  We use the residual-based approximation proposed by \citet{Bazilevs2007} for fine scale components to close the equations, which is given by 
\begin{equation}
\rho(\phi) v_i^\prime = -\tau_m \mathcal{R}_m(\rho,\overline{v_i},\overline{p}) \qquad \text{and} \qquad
p^\prime = -\rho(\phi)\tau_c \mathcal{R}_c(\overline{v_i}).
\end{equation}

It is important to note that because we are using block iterative method, the momentum equations,  \cref{eqn:weak_VMS_ns}, and the Cahn-Hilliard equations, \cref{eqn:phi_eqn_var_decom} and \cref{eqn:mu_eqn_var_decom},
 are solved as two different nonlinear sub-problems. We use conforming Galerkin based finite elements and replace the continuous spaces with their discrete counterparts; notice that as we only solve for course scale components, the trial functions and the basis functions are in the same space. Then we can write a discrete variational formulation can we written as follows.
\begin{definition}
	Find $\vec{\overline{v}}^h \in \mathscr{P}\vec{H}^{1,h}(\Omega)$ and $\overline{p}^h, \phi^h, \mu^h\in\mathscr{P}H^{1,h}(\Omega)$ such that
	\begin{align}
	\begin{split}
	\text{Momentum Eqns:}&\quad 
	\left(w_i,\rho(\phi^h)\pd{\overline{v_i}^h}{t}\right) + \left(w_i,\rho(\phi^h)\overline{v_j}^h\pd{\overline{v_i}^h}{x_j}\right) - \left(w_i,\tau_m\mathcal{R}_m(\overline{v_j}^h,\overline{p}^h)\pd{\overline{v_j}^h}{x_j}\right) \\ 
	&\quad+ \left(\pd{w_i}{x_j},\overline{v_j}^h\left(\tau_m\mathcal{R}_m(\overline{v_i}^h,\overline{p}^h)\right)\right) -\left(\pd{w_i}{x_j},\frac{\tau_m^2}{\rho(\phi^h)}\mathcal{R}_m(\overline{v_j}^h,\overline{p}^h)\mathcal{R}_m(\overline{v_i}^h,\overline{p}^h)\right) \\
	&\quad  +
	\frac{1}{Pe}\left(w_i, J_j^h\pd{\overline{v_i}^h}{x_j}\right) + \frac{1}{Pe}\left(\pd{w_i}{x_j}, J_j^h \frac{\tau_m}{\rho(\phi^h)}\mathcal{R}_m(\overline{v_i}^h,\overline{p}^h)\right)  \\ 
	&\quad-\frac{Cn}{We} \left(\pd{w_i}{x_j}, \, {\pd{\phi^h}{x_i}\pd{\phi^h}{x_j}} \right) - 
	\frac{1}{We}\left(\pd{w_i}{x_i},\overline{p}^h\right) \\
	&\quad + \frac{1}{We}\left(\pd{w_i}{x_i},\rho(\phi^h)\tau_c \mathcal{R}_c(\overline{v_i}^h)\right) + \frac{1}{Re}\left(\pd{w_i}{x_k},\eta(\phi^h)\pd{\overline{v_i}^h}{x_k}\right) \\
	&\quad - \left(\frac{w_i,\rho(\phi^h)\hat{g_i}}{Fr}\right)  + \left(q,\pd{\overline{v_i}^h}{x_i}\right) - \left(\pd{q}{x_i},\frac{\tau_m}{\rho(\phi^h)}\mathcal{R}_m(\overline{v_i}^h,\overline{p}^h)\right)= 0, 
	\end{split} \\
	\text{Cahn-Hilliard Eqn:}&\quad \left(q, \pd{\phi^h}{t}\right) - \left(\pd{q}{x_i},\overline{v_i}^h\; \phi^h\right)
	+ \frac{1}{PeCn} \left(\pd{q}{x_i},{\pd{\left(m(\phi^h)\mu^h\right)}{x_i}}\right)
	= 0,\\
	\text{Chemical Potential:}&\quad -\left(q,\mu^h\right) + \left(q, \d{\psi}{\phi^h}\right) + Cn^2 \left(\pd{q}{x_i},{\pd{\phi^h}{x_i}}\right)  = 0, \label{eqn:mu_eqn_var_decom_disc}
	\end{align}
		\label{eqn:weak_VMS_disc}
where, 
\begin{align}
\tau_m &= \left( \frac{4}{\Delta t^2}  + \overline{v_i}^hG_{ij}\overline{v_j}^h + \frac{1}{(\rho(\phi^h)Pe)}\overline{v_i}^hG_{ij}\overline{J_j}^h + C_{I} \left(\frac{\eta(\phi^h)}{\rho(\phi^h)Re}\right)^2 G_{ij}G_{ij}\right)^{-1/2},\\
\tau_c &=  \frac{1}{tr(G_{ij})\tau_m}.
\end{align}
\end{definition}
Here we set $C_{I}$ for all our simulations to 6 and the residuals are given by
\begin{align}
\begin{split}
\mathcal{R}_m(\overline{v_i}^h,\overline{p}^h) &= \rho(\phi)\pd{\overline{v_i}^h}{t} + \rho(\phi)\overline{v_j}^h\pd{\overline{v_i}^h\;}{x_j} + 
\frac{1}{Pe}J_j^h\pd{\overline{v_i}^h}{x_j} + \frac{Cn}{We} \pd{}{x_j}\left({\pd{\phi^h}{x_i}\pd{\phi^h}{x_j}}\right) \\ &+ 
\frac{1}{We}\pd{\overline{p}^h}{x_i} - \frac{1}{Re}\pd{}{x_k}\left({\eta(\phi)\pd{\overline{v_i}^h}{x_k}}\right) - \frac{\rho(\phi)\hat{g}}{Fr}, 
\end{split} \\ 
\mathcal{R}_c(\overline{v_i}^h) &= \pd{\overline{v_i}^h}{x_i}.
\end{align}

Finally, we note that in the above expressions the time derivative is still continuous.  In the fully discrete numerical method we replace the time-derivatives in the momentum and phase field equations using
the trapezoidal rule in the form of the scheme presented in \cref{eqn:disc_nav_stokes_semi} -- \cref{eqn:disc_time_phi_eqn_semi}.

\subsection{Handling non-linearity}
\label{subsec:newton_iter}

The fully discretized system is a collection of two non-linear systems of algebraic equations, one corresponding to the discretized version of the momentum equations (\crefrange{eqn:nav_stokes_var_semi_disc}{eqn:cont_var_semi_disc}), the other corresponding
to the Cahn-Hilliard equations, \cref{eqn:cont_var_semi_disc,eqn:phi_eqn_var_semi_disc}. Because 
we use an implicit time-stepping strategy, an internal (within each block iteration) Newton's method is used to solve the aforementioned non-linear algebraic equations.  Newton's method for a system of equations can be written as follows:
\begin{align}
J_{ij}^{k} \; \delta U_j^{k} =& -F_i^{s,k}(U_1^{s,k}, U_2^{s,k}, \dots, U_n^{s,k}), \label{eqn:newton_linear_system}\\ 
 \quad J_{ij}^{k} :=& \pd{}{U_j}( F_i^{s,k}(U_1^{s}, U_2^{s}, \dots, U_n^{s})),
\end{align}
where  $U_j^{s, k}$ is a vector of all degrees of freedom at the $k^{\text{th}}$ time step and at the
 $s^{\text{th}}$ Newton iteration. $\delta U_j^{k}$ is a vector of the ``perturbation" in the degrees of freedom from the previous Newton iteration.  An initial guess $U_i^{0,k}$ must be provided to start the iteration.  $J_{ij}^{k}$ is a Jacobian matrix (very similar to the gradient term in the 1D root finding Newton's algorithm).  $F_i^{s,k}$ is the function of the degrees of freedom at the $s^{\text{th}}$ Newton iteration which is being minimized.  One can calculate $J_{ij}^{k}$ either numerically using finite differences or analytically.  We calculate $J_{ij}^{k}$ analytically by calculating the variations (partial differentials) of the operators with respect to the degrees of freedoms.  Using this technique, $U_j^{s, k}$ can be updated as follows until the desired tolerances are reached:
\begin{align}
U_j^{s+1, k} = U_j^{s, k} + \delta U_j^{k}.
\end{align}

In the time-steping context the solution vector at the previous time step can be used to initiate the Newton iteration at each timestep. Here \cref{eqn:newton_linear_system} is the linear system which has to be solved at each Newton iteration  on a massively parallel scale for two sets of PDEs working in a block iteration setup.  In order to handle the Newton iterations and the embedded linear solves, we make use of the  {\sc petsc}  library, which provides parallel efficient implementations of the above ideas along with a large suite of preconditioners and solvers for the linear system \citep{petsc-efficient,petsc-web-page,petsc-user-ref}.  The choice of linear solvers and preconditioner is different for different numerical experiments and more details are provided in the respective sections for those results. 


\section{Octree based finite element discretisation and remeshing}
\label{sec:octree_mesh}

While the concept of adaptive space partitions is well studied, developing such methods for applications demanding frequent refinements on large distributed systems presents significant challenges. This work builds on existing methods for performing large-scale finite element computations using octree-refined meshes. 
The octree-based framework, \dendro\, is extended to support sub-domains, primarily with the objective of supporting long channels and division of the domain based on arbitrary functions that define the geometry.
We provide a brief description on building the octree mesh in parallel and performing finite element computations. Additional details can be found in~\cite{SundarSampathBiros08}. 
\dendro\ provides the adaptive mesh refinement (AMR) and all parallel data-structures, and for this project, \dendro\ was extended to support domains that are not cuboidal in shape. We give a brief overview of the \dendro\ framework and provide details on the new contributions. The main steps in building and maintaining an adaptively refined mesh in a distributed-memory machine are described below.

\paragraph{Refinement:} The sparse grid is constructed based on the geometry. Proceeding in a top-down fashion, a cell is refined if a surface (defined by a zero level-set of a field, or a cloud of points)  passes through it. 
We also provide an additional function that tests for membership and eliminates regions outside the domain. This is necessary as by definition the octree maps to a cuboidal domain. By eliminating regions, we can support arbitrary domains, including domains with holes, such a porous media. For long channels, such a pruning of the octree mesh is preferable to stretching the domain, as it keeps the elements isotropic and results in better conditioning of the operators.

Since the refinement happens in an element-local fashion, this step is embarrassingly parallel.
The user provides a function (as a \texttt{C++} lambda function) that given coordinates, $x,y,z$ returns the distance from the surface. The eight corners of an octant are tested using this function. If all $8$ points have a positive distance (outside), then we retain this element, but do not refine further. If all $8$ points have a negative distance (inside), then this element is removed from the mesh. If some of the corners of the octant are inside and others outside, then this octant is refined. This is repeated till the desired level of refinement is achieved. 

In distributed memory, the initial top-down tree construction, also enables an efficient partitioning of the domain across an arbitrary number of processes. All processes start at the root node (i.e., the cubic bounding box for the entire domain). In order to avoid communication during the refinement stage, we opt to perform redundant computations on all processes. Starting from the root node, all processes refine (similar to the sequential code) until at least $\mathcal{O}(p)$ octants requiring further refinement are produced. Then using a weighted space-filling-curve (SFC) based partitioning, we partition the octants across all processes. Note that we do not communicate the octants as every process has a copy of the octants, and all that needs to be done at each process is to retain a subset of the current octants corresponding to its sub-domain. This allows us to have excellent scalability, as all processes perform (roughly) the same amount of work without requiring any communication. The SFC-based partitioning also ensures load balancing for subsequent stages and minimizes data-dependencies from the resulting partition. See \citet{SundarSampathAdavaniEtAl07, FernandoDuplyakinSundar17} for additional details on the tree construction and partitioning.
%
This produces a non-cuboidal octree that is refined to the geometry. For geometry that are stretched along certain directions, or domains with large voids/holes, eliminating regions keeps the overall problem sizes small, without adversely affecting the conditioning of the system. 
Refinement is followed by enforcing a constraint on the size of neighbouring elements, called 2:1 balancing. This is important to ensure that the neighborhood maps and data-structures are bounded, and also maximizes the sparsity of the assembled matrices.  

\paragraph{2:1 Balancing:} We enforce a condition in our distributed octrees that no two neighboring octants differ in size by more than a factor of two. This makes subsequent operations simpler without affecting the adaptive properties. Our balancing algorithm is similar to existing approaches for balancing octrees~\citep{bern1999parallel,BursteddeWilcoxGhattas11,SundarSampathAdavaniEtAl07} with the added aspect that it does not generate octants if the ancestor does not exist in the input. This is done to ensure that regions that were previously eliminated are not filled in. The algorithm proposed by~\citet{bern1999parallel} is easily extensible to support this case, as we simply need to skip adding balancing octants that violate the criteria.
The basic idea is to visit each element and generate balancing octants, i.e., neighbouring octants that are larger without violating the balance condition. These can be generated (locally) in an efficient manner by using an efficient ordered set data-structure like AVL-trees~\citep{Knuth1973Art}. In distributed memory, we sort the octants according to the Morton order and remove duplicate octants to obtained the balanced octree.

\paragraph{Partition:} Refinement and the subsequent 2:1 balancing of the octree can result in a non-uniform distribution of elements across the processes, leading to load imbalance. This is particularly challenging when arbitrary geometries are meshed, as this can make the mesh heavily load-imbalanced.  The Morton ordering enables us to equipartition the elements by performing a parallel scan on the number of elements on each process followed by point-to-point communication to redistribute the elements.  As we refine near the two-phase interface, it can affect the performance, as it is likely localized on a small subset of processes, this where Morton ordering comes to rescue and delivers and effective partition. The partitioning scheme is able to handle arbitrary geometries as the partition only tries to equally divide the retained elements across the processes.  The weighted partitioning, is a straightforward extension of our SFC-based partitioning that provides variable weight to the elements based on whether the element lies inside the retained domain of the arbitrary geometry or not. This allows us to more accurately estimate the work on each partition and provide better parallel load-balancing. Additional details on SFC-based partitioning and its implementation details in \dendro\ can be found in~\citet{FernandoDuplyakinSundar17}. 
 
\paragraph{Meshing:} By meshing we refer to the construction of the (numerical) data structures required for finite element computations from the (topological) octree data. \dendro ~already has efficient implementations for building the required neighborhood information and for managing overlapping domains between processors (\textit{ghost} or \textit{halo} regions). The key difference with our previous applications is the requirement to handle arbitrary geometries, as all neighbors might not be present in the mesh. This also complicates the process of applying boundary conditions. We added support for defining {\em subdomains} within \dendro. The subdomains are defined using a function that takes a coordinate $(x,y,z)$ as input and returns \texttt{true} or \texttt{false} depending on whether that coordinate is part of the subdomain or not. The subdomain leverages the core mesh data-structure and additionally defines a unique mapping for nodes that are part of the subdomain. It also keeps track of which nodes belong to subdomain boundaries. Therefore, subdomains have a small overhead and store significantly less data than the main mesh data-structure. For our target application, it is important to identify the external (domain) boundary as this dictates which elements will be retained in the domain. Therefore, the subdomain stores two \texttt{bits} to keep track of whether a node is non-boundary, or external. Additional details on the construction of the meshing-related data structures can be found in~\citet{SundarSampathBiros08}.

\paragraph{Handling hanging nodes:} While the use of quasi-structured grids such as octree-grids makes parallel meshing scalable and efficient, without sacrificing adaptivity, one challenge is to efficiently handle the resulting non-conformity. This results in so called {\em hanging nodes} occurring on faces/edges shared between unequal elements that do not represent independent degrees of freedom. In order to minimize the memory footprint and overall efficiency, the hanging nodes are not stored in \dendro. Instead, since they are constrained by the order of the elements and the non-hanging nodes on the hanging face/edge, we introduce these as temporary variables before elemental matrix assembly or matrix-vector multiplication\footnote{for Matrix-free computations} and eliminate them following the elemental operation. This is fairly straightforward given that our meshes are limited to a 2:1 balance, limiting the number of overall cases to be considered. Additional details on the handling of hanging nodes in \dendro\ can be found in~\citet{SundarSampathAdavaniEtAl07,SundarSampathBiros08}. 

\paragraph{Intergrid transfers}

An essential requirement is to adapt the spatial mesh as the interface moves across the domain. An example of the adaptive mesh refinement following the moving bubble is shown in Figure~\ref{fig:mesh_adaption}. In the distributed memory setting, this also indicates a need to repartition and rebalance the load. Every few time steps, we remesh. This is similar to the initial mesh generation and refinement, except that it is now based on the current position of the interface as well as the original geometry. 
This is followed by the 2:1 balance enforcement and meshing. Once the new mesh is generated, we transfer the velocity field from the old mesh to the new mesh using interpolation as needed. Since the intergrid transfer happens only between parent and child (for coarsening and refinement) or remains unchanged, this can be performed on the old mesh using standard polynomial interpolation, followed by a simple repartitioning based on the new mesh (Note that the use of SFCs makes this a linear shift). Additional details on efficient implementation of distributed-memory intergrid transfers across octree meshes can be found in~\citet{SundarBirosBurstedde12}.
%

\section{Numerical experiments}
\label{sec:num_exp}


\subsection{2D manufactured solutions}
\label{subsec:manfactured_soln_result}
We first compare convergence and other properties using the method of manufactured solutions. The idea of this approach  is to input a ``solution'' that satisfies solenoidality, but not necessarily the full set of evolution equations. Instead, the residual from plugging this ``solution'' into the full Cahn-Hilliard Navier-Stokes system becomes a forcing term on the right-hand side of \cref{eqn:nav_stokes_var_semi_disc} -- \cref{eqn:phi_eqn_var_semi_disc}.
We select the following ``solution'' with appropriate forcing terms: 
\begin{equation}
\begin{split}
\vec{v} &= \left( \sin(\pi x_1)\cos(\pi x_2)\sin(t), \, -\cos(\pi x_1)\sin(\pi x_2)\sin(t), \, 0 \right), \\
p &= \sin(\pi x_1)\sin(\pi x_2)\cos(t), \quad \phi = \cos(\pi x_1)\cos(\pi x_2)\sin(t),\\
\mu &= \cos(\pi x_1)\cos(\pi x_2)\sin(t).
\end{split}
\label{eq:manufac_exact}
\end{equation}
We compute numerical solutions with the following non-dimensional parameters: $Re = 10$, $We = 1$, $Cn = 1.0$, $Pe = 3.0$, and $Fr = 1.0$. The density ratio is set to be $\rho_{-}/\rho_{+} = 0.85$. We use a 2D uniform mesh with $450 \times 450$ bilinear elements (quads) for all the numerical experiments in this sub-section.  We test the numerical framework using various time-steps to check for convergence in time.  Panel (a) of \cref{fig:manufac_temporal_convergence} shows the temporal convergence of the $L^2$ errors (numerical solution compared with the manufactured solution) calculated at $t = \pi$ to allow for one complete phase with respect to time-steps. It can be clearly seen that on a log-log scale of error vs. time-step, the errors are decreasing with a slope close to two for the phase-field parameter $\phi$ which demonstrates second order convergence. For velocity the slopes taper off from close to 1.5 with decreasing time steps. We expect this tapering off at smaller time-steps due to the effect of spatial errors.  

We next conduct a spatial convergence study. We fix the time step at $\delta t = 10^{-3}$, and vary the spatial mesh resolution. Panel (b) of \cref{fig:manufac_temporal_convergence} shows the spatial convergence of $L^2$ errors (numerical solution compared with the manufactured solution) at $t = 0.2$. \Cref{tab:spatial_error_table} \todo[color=cyan]{{R1: \#1.1}} shows the errors and the rate of change of errors for varying spatial grid spacing for velocity and $\phi$; we observe second order convergence for both velocity and $\phi$.

Panel (c) of \cref{fig:manufac_temporal_convergence} shows mass conservation for an intermediate resolution simulation with $\delta t=10^{-3}$ and $175 \times 175$ elements. We plot mass drift (i.e. $\int_{\Omega} \phi(.,t)dx - \int_{\Omega}\phi(.,t=0)dx$), and we expect this value to be very close to zero as per the theoretical
prediction of \cref{prop:mass_conservation}. We observe excellent mass conservation with fluctuations of the order of $10^{-12}$ (which is close to machine precision).  

For the Navier-Stokes (NS) block we use a relative tolerance of $10^{-10}$, and for the Cahn-Hilliard (CH) block the relative tolerance is set to $10^{-10}$.  For the linear solves within each Newton iteration, for NS we use a relative tolerance of $10^{-7}$, and for CH relative tolerance is set to $10^{-7}$.  The tolerance for block iteration errors is set to $10^{-4}$ ($\text{block}_\text{tol}$ from \cref{fig:flowchart_block}).          
 
\begin{figure}
	\centering
	\begin{tikzpicture}
	\begin{loglogaxis}[width=0.45\linewidth, scaled y ticks=true,xlabel={timestep},
	ylabel={$\norm{u - u_{exact}}_{L^2(\Omega)}$},legend entries={$v_1$,$v_2$,$\phi$, $slope = 1$, $slope = 2$},
	legend style={nodes={scale=0.65, transform shape}}, legend pos=south east, title={(a)}]
	\addplot table [x={timestep},y={L2U},col sep=comma] {manufac_results.csv};
	\addplot table [x={timestep},y={L2V},col sep=comma] {manufac_results.csv};
	\addplot +[mark=triangle*] table [x={timestep},y={L2Phi},col sep=comma] {manufac_results.csv};
	\addplot +[mark=none, red, dashed] [domain=0.005:0.1]{0.1*x};
	\addplot +[mark=none, blue, dashed] [domain=0.005:0.1]{0.025*x^2};
	\end{loglogaxis}
	\end{tikzpicture}
	\begin{tikzpicture}
	\begin{loglogaxis}[width=0.45\linewidth, scaled y ticks=true,
	xlabel={Element size, $h$ (-)},
	ylabel={$\norm{u - u_{exact}}_{L^2(\Omega)}$},
	legend entries={$v_1$,$v_2$, $\phi$, $slope = 2$},legend style={nodes={scale=0.65, transform shape}}, legend pos=south east, 
	xtick = {0.005, 0.01, 0.015, 0.02},
    xticklabel={
		\pgfkeys{/pgf/fpu=true}
		\pgfmathparse{exp(\tick)}%
		\pgfmathprintnumber[fixed relative, precision=3]{\pgfmathresult}
		\pgfkeys{/pgf/fpu=false}
	},
	scaled x ticks=false
	, title={(b)}]
	\addplot table [x={h},y={L2U},col sep=comma] {manufac_results_spatial_NonInt.csv};
	\addplot table [x={h},y={L2V},col sep=comma] {manufac_results_spatial_NonInt.csv};
	\addplot +[mark=triangle*] table [x={h},y={L2Phi},col sep=comma] {manufac_results_spatial_NonInt.csv};
	\addplot +[mark=none, blue, dashed] [domain=0.005:0.02]{x^2};
	\end{loglogaxis}
	\end{tikzpicture}
	
	\begin{tikzpicture}
	\begin{axis}[width=0.45\linewidth,scaled y ticks=true, 
	xlabel={Time (-)},ylabel={$\int_{\Omega} \phi (x_i)\mathrm{d}x_i - \int_{\Omega} \phi_{0} (x_i)\mathrm{d}x_i$},legend style={nodes={scale=0.65, transform shape}}, 
	ymin=-1e-12,ymax=1e-12,
	xmin=0, xmax=6
	, title={(c)}]
	\addplot[line width=0.35mm, color=blue]table [x={time},y={TotalPhiMinusInitial},col sep=comma] {Energy_data_manufacSol_ts1e-3_150elem_conservation.csv};
	\end{axis}
	\end{tikzpicture}
	\caption{\textit{Manufactured Solution Examples} (\cref{subsec:manfactured_soln_result}): (a) Temporal convergence of the numerical scheme for the case of manufactured solutions; (b) spatial convergence of the numerical scheme for the case of manufactured solutions; (c) mass conservation for the case of manufactured solutions using $175 \times 175$ elements with time step of $10^{-3}$.} 
	\label{fig:manufac_temporal_convergence}
\end{figure}
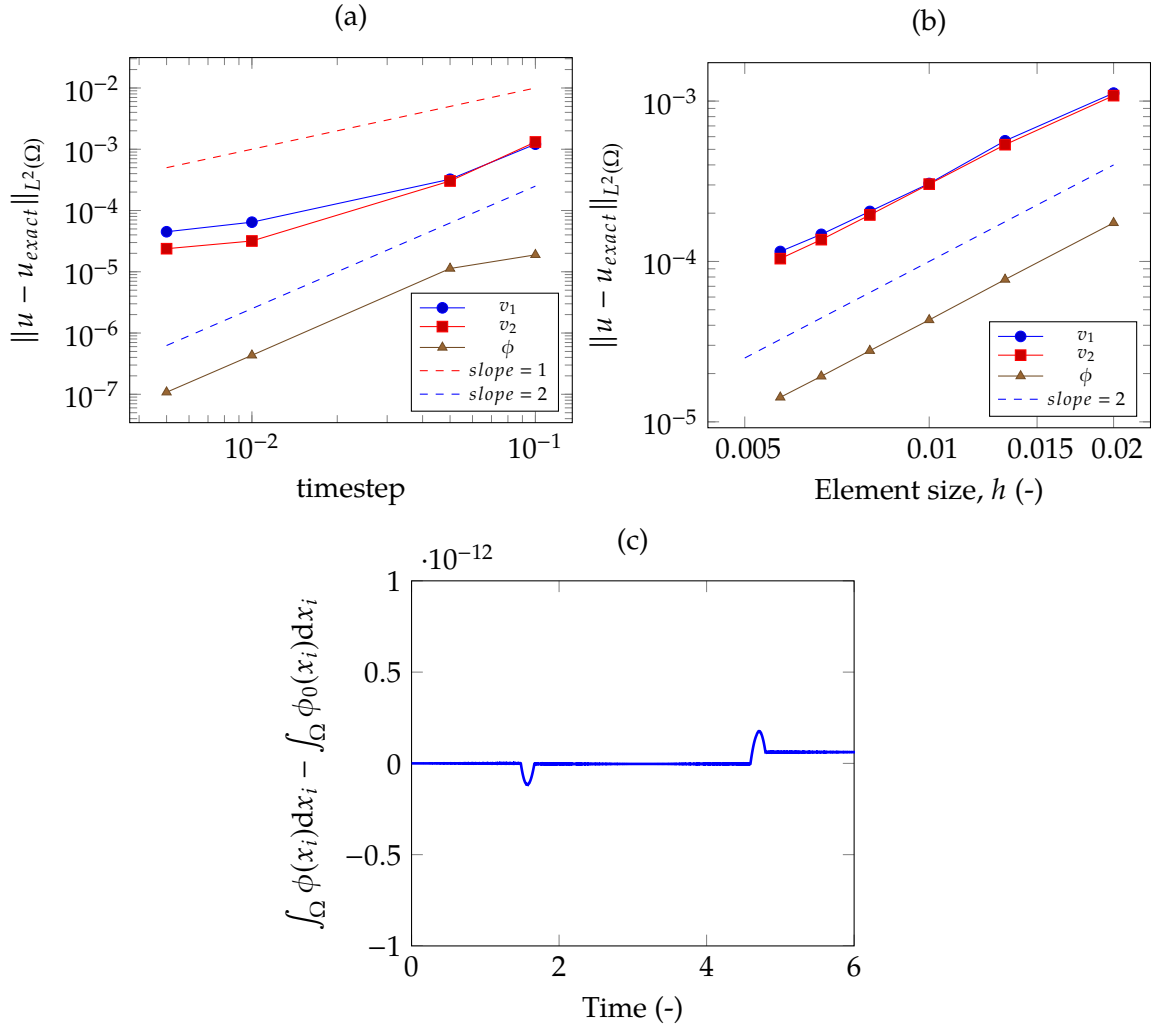

\begin{table}[H]
	\centering
	\scriptsize
	\begin{tabular}{@{}c|c|c|c|c|c|c@{}}
		\toprule
		 & \multicolumn{2}{c|}{$v_1$}   & \multicolumn{2}{|c|}{$v_2$} & \multicolumn{2}{|c}{$\phi$}   \\ 
		\cmidrule(lr){2-3}
		\cmidrule(lr){4-5}
		\cmidrule(lr){6-7}
		{Resolution ($h$)}  & $\norm{v_1 - v_{1,exact}}_{L^2(\Omega)}$  & Rate  & $\norm{v_2 - v_{2,exact}}_{L^2(\Omega)}$  & Rate & $\norm{\phi - \phi_{exact}}_{L^2(\Omega)}$ & Rate  \\
		\midrule
		\midrule
		{$1/50$}  & {$\num{1.12146E-3}$}  & {-}  & {$\num{1.08111E-3}$}  &{-} & {$\num{1.74074E-4}$}     &{-} \\
		{$1/75$}  & {$\num{5.65573E-4}$}  & {1.678}  & {$\num{5.34651E-4}$}  &{1.726} & {$\num{7.71813e-05}$}     &{1.9936} \\
		{$1/100$}  & {$\num{3.06388E-4}$}  & {2.149}  & {$\num{3.03545E-4}$}  &{1.985} & {$\num{4.31879e-05}$}     &{2.0359} \\
		{$1/125$}  & {$\num{2.05091E-4}$}  & {1.798}  & {$\num{1.94995E-4}$}  &{1.983} & {$\num{2.77703e-05}$}     &{1.979}  \\ 
		{$1/150$}  & {$\num{1.47591E-4}$}  & {1.804}  & {$\num{1.47591E-4}$}  &{1.945} & {$\num{1.92326e-05}$}     &{2.0149}  \\
		{$1/175$}  & {$\num{1.15397E-4}$}  & {1.5963}  & {$\num{1.15397E-4}$}  &{1.768} & {$\num{1.42145e-05}$}     &{1.9614}  \\ \bottomrule
	\end{tabular}
	\caption{\textit{Manufactured Solution Examples} (\cref{subsec:manfactured_soln_result}): Spatial convergence of the numerical scheme for the case of manufactured solutions with time step of $10^{-3}$.}
	\label{tab:spatial_error_table}                            
\end{table}

\subsection{Single rising bubble: 2D Benchmarks}
\label{subsec:single_rising_drop_2D}

We next illustrate the framework using a canonical case of a single air bubble rising in a quiescent channel of water. This is a well studied case, and several benchmark studies have been published~\citep{Hysing2009,Aland2012,Yuan2017}.  We start with selecting appropriate scales to non-dimensionalize the problem.  We begin with setting the Froude number ($Fr = u^2/(gD)$) to 1.0, which fixes the non-dimensional velocity scale to $u = \sqrt{gD}$, where $g$ is the gravitational acceleration, and $D$ is the diameter of the bubble.  This gives a Reynolds number of $\rho_c g^{1/2}D^{3/2}/\mu_c$, where $\rho_c$ and $\mu_c$ are the specific density and specific viscosity of the continuous fluid (i.e. water) respectively. The non-dimensional group $\rho_c g^{1/2}D^{3/2}/\mu_c$ is called the Archimedes number (Ar) and is a variant of the Reynolds number; it serves as a coefficient in front of the diffusion term in the momentum equation.  Further, the same choice of non-dimensional scales leads to a Weber number ($We = \rho_c g D^2/\sigma$).  We use the density of the continuous fluid (i.e., water) to non-dimensionalize; in this case $\rho_{+} = 1$.  Further, the density ratio is given by $\rho_{+}/\rho_{-}$.  Similarly, $\nu_{+}/\nu_{-}$ is the viscosity ratio. We present results for two test cases that are popularly reported in the benchmark studies.  

\Cref{tab:physParam_bubble_rise_2D_benchmarks} shows the parameters and the corresponding non-dimensional numbers.  The bubble is centered at $(1,1)$, and since our scaling length scale is the bubble diameter, the bubble diameter for our simulations is 1.  The domain is 
$[0,2]\times[0,4]$.  
Following the benchmark studies in the literature we choose the top and bottom wall to have no slip boundary conditions and the side walls to have boundary conditions: $v_1=0$ ($x$-velocity) and $\frac{\partial v_2}{\partial x}=0$ ($y$-velocity). We use the biCGstab (bcgs) linear solver from the PETSc suite along with the Additive Schwarz (ASM) preconditioner for the linear solves in the Newton iterations (see \cref{subsec:newton_iter}).  We use a time step of $\num{2.5e-3}$ for both the test cases. 

The convergence criterion for both the test cases is as follows. For the Navier-Stokes (NS) block we use a relative tolerance of $10^{-6}$, and for the Cahn-Hilliard (CH) block the relative tolerance is set to $10^{-8}$.  
The tolerance for block iteration errors is set to $10^{-4}$ ($\text{block}_\text{tol}$ from \cref{fig:flowchart_block}).

\begin{table}[H]
	\centering\normalsize\setlength\tabcolsep{5pt}
	\begin{tabular}{@{}|c|c|c|c|c|c|c|c|c|c|c|c|@{}}
		\toprule
		Test Case  & $\rho_{c}$  & $\rho_{b}$  & $\mu_{c}$  & $\mu_{b}$ & $\rho_{+}/\rho_{-}$ & $\nu_{+}/\nu_{-}$  & $g$ & $\sigma$ & $Ar$ & $We$ & $Fr$ \\
		\midrule
		\midrule
		{$1$}  & {1000}  & {100}  & {10}  &{1.0} & {10}     &{10} & {0.98}    & {24.5}  &  {35}     & {10} & {1.0}    \\
		{$2$}  & {1000}  & {1.0}  & {10}  &{0.1} & {1000}     &{100} & {0.98}    & {1.96}  &  {35}     & {125} & {1.0}    \\
		\bottomrule
	\end{tabular}
	\caption{Physical parameters and corresponding non-dimensional numbers for the 2D single rising drop  benchmarks considered
	in \cref{subsec:single_rising_drop_2D}.}
	\label{tab:physParam_bubble_rise_2D_benchmarks}                            
\end{table}

\subsubsection{Test case 1}
\label{subsubsec:single_rising_drop_2D_t1}
This test case considers the effect of higher surface tension and consequently less deformation of the bubble as it rises. 
We compare the bubble shape in~\cref{fig:test_case1} with benchmark quantities presented in three previous studies~\citep{Hysing2009,Aland2012,Yuan2017}.  We take $Cn=\num{5e-3}$ for this case.  Panel (a) of~\cref{fig:test_case1} shows a shape comparison against benchmark studies in the literature, and we see an excellent agreement in the shape of the bubble.  Panel (b) of~\cref{fig:test_case1} shows a comparison of centroid locations with respect to time against benchmark studies in the literature; again we see excellent agreement. We can see from the magnified inset in panel (b) of~\cref{fig:test_case1} that as we keep increasing the mesh resolution the plot approaches the benchmark studies, and we see an almost exact overlap between the benchmark and cases with $h = 2/400$ and $h = 2/600$, where $h$ is the size of the element, demonstrating spatial convergence. 

We next check whether the numerical method follows the theoretical energy stability proved in \cref{th:energy_stability}.  We present the evolution of the energy functional defined in \cref{eqn:energy_functional} for test case 1. Panel (c) of~\cref{fig:test_case1}~shows that the energy is decreasing in accordance with the energy stability condition for all three spatial resolutions of $h = 2.0/200$, $h = 2.0/400$, and $h = 2.0/600$. 

Finally we check the mass conservation. Panel (d) shows the total mass of the system minus the initial mass. At all reported spatial resolutions the change in the total mass is of the order of $10^{-8}$, even after 1600 time steps.  It is clear that the numerical method follows excellent mass conservation across long time horizons. 

\begin{figure}[H]
	\centering
	\begin{tikzpicture}
	\begin{axis}[width=0.55\linewidth,scaled y ticks=true,xlabel={$\mathrm{x}$},ylabel={$\mathrm{y}$},legend style={nodes={scale=0.65, transform shape}}, ymin=0.0, ymax=4.0, ytick distance=1.0,  xtick={0.0, 1.0, 2}, title={(a)},
	legend style={nodes={scale=0.95, transform shape}, row sep=2.5pt},
	legend entries={present study, \citet{Hysing2009}, \citet{Aland2012}},
	legend pos= north west,
	legend image post style={scale=1.0},
	unit vector ratio*=1 1 1,
	xmin=0, xmax=2, 
	legend image post style={scale=3.0},
	]
	\addplot [only marks,mark size = 0.5pt,color=blue, each nth point=5, filter discard warning=false, unbounded coords=discard] table [x={x},y={y},col sep=comma] {bubble_shape_Re35We10.csv};
	\addplot [only marks,mark size = 0.5pt,color=black,each nth point=3, filter discard warning=false, unbounded coords=discard] table [x={x},y={y},col sep=comma] {Hysing_Re35We10.csv};
	\addplot [only marks,mark size = 0.5pt,color=red,each nth point=1, filter discard warning=false, unbounded coords=discard] table [x={x},y={y},col sep=comma] {Aland_Voight_Re35We10_shape.csv};
	\end{axis}
	\end{tikzpicture}
	
	\begin{tikzpicture}[spy using outlines={rectangle, magnification=3, size=1.5cm, connect spies}]
	\begin{axis}[width=0.45\linewidth,scaled y ticks=true,xlabel={Time (-)},ylabel={Centroid},legend style={nodes={scale=0.65, transform shape}}, xmin=0, xmax=4.2, ymin=0.9, ymax=2.5, ytick distance=0.5,  xtick={0.0, 1.0, 2, 3, 4}, title={(b)},
	legend style={nodes={scale=0.95, transform shape}, row sep=2.5pt},
	legend entries={\citet{Hysing2009}, \citet{Aland2012}, \citet{Yuan2017}, $h = 2.0/200$, $h = 2.0/400$, $h = 2.0/600$},
	legend pos= north west,
	legend image post style={scale=1.0}
	]
	\addplot [line width=0.15mm, color=black] table [x={time},y={centroid},col sep=comma] {Hysing_centroid_Re35We10.csv};
	\addplot [line width=0.15mm, color=red] table [x={time},y={centroid},col sep=comma] {Aland_Voight_centroid_Re35We10.csv};
	\addplot [line width=0.15mm, color=ForestGreen] table [x={time},y={centroid},col sep=comma] {Yuan_centroid_Re35We10.csv};
	\addplot+[mark size = 0.5pt]table [x={time},y={centroid},col sep=comma, each nth point=3, filter discard warning=false, unbounded coords=discard] {centerOfMass_200_400_Re35We10.csv};	
	\addplot+[mark size = 0.5pt]table [x={time},y={centroid},col sep=comma, each nth point=3, filter discard warning=false, unbounded coords=discard] {centerOfMass_400_800_Re35We10.csv};
	\addplot+[mark size = 0.5pt]table [x={time},y={centroid},col sep=comma, each nth point=3, filter discard warning=false, unbounded coords=discard] {centerOfMass_600_1200_Re35We10.csv};
	\coordinate (a) at (axis cs:3.6,2.0);
	\end{axis}
	\spy [black] on (a) in node  at (5,1.2);
	\end{tikzpicture}
	\hskip 5pt
	\begin{tikzpicture}
	\begin{axis}[width=0.45\linewidth,scaled y ticks=true,xlabel={Time (-)},ylabel={$E_{tot}(\vec{v},\phi,t)$},legend style={nodes={scale=0.65, transform shape}}, xmin=0, xmax=4.2, xtick={0,1,2,3,4},  title={(c)},
	legend style={nodes={scale=0.95, transform shape}, row sep=2.5pt},
	legend entries={$h = 2.0/200$, $h = 2.0/400$, $h = 2.0/600$, $h = 2.0/800$},
	legend pos= north east,
	legend image post style={scale=1.0}]
	\addplot +[mark size = 1pt, each nth point=20, filter discard warning=false, unbounded coords=discard] table [x={time},y={TotalEnergy},col sep=comma] {Energy_data_200_400_Re35We10.csv};
	\addplot +[mark size = 1pt, each nth point=20, filter discard warning=false, unbounded coords=discard] table [x={time},y={TotalEnergy},col sep=comma] {Energy_data_400_800_Re35We10.csv};
	\addplot +[mark size = 1pt, each nth point=20, filter discard warning=false, unbounded coords=discard] table [x={time},y={TotalEnergy},col sep=comma] {Energy_data_600_1200_Re35We10.csv};
	\addplot +[mark size = 1pt, each nth point=20, filter discard warning=false, unbounded coords=discard] table [x={time},y={TotalEnergy},col sep=comma] {Energy_data_800_1600_Re35We10.csv};
	\end{axis}
	\end{tikzpicture}

	\begin{tikzpicture}
	\begin{axis}[width=0.45\linewidth,scaled y ticks=true,xlabel={Time (-)},ylabel={$\int_{\Omega} \phi (x_i)\mathrm{d}x_i - \int_{\Omega} \phi_{0} (x_i)\mathrm{d}x_i$},legend style={nodes={scale=0.65, transform shape}}, xmin=0, xmax=4.2, xtick={0,1,2,3,4}, title={(d)},
	legend style={nodes={scale=0.95, transform shape}, row sep=2.5pt},
	legend entries={$h = 2.0/200$, $h = 2.0/400$, $h = 2.0/600$, $h = 2.0/800$},
	legend pos= north east,
	legend image post style={scale=1.0},
	ymin=-9e-8,ymax=9e-8]
	\addplot+[mark size = 0.75pt, each nth point=20, filter discard warning=false, unbounded coords=discard]table [x={time},y={TotalPhiNorm},col sep=comma] {Energy_data_200_400_Re35We10.csv};
	\addplot+[mark size = 0.75pt, each nth point=20, filter discard warning=false, unbounded coords=discard]table [x={time},y={TotalPhiNorm},col sep=comma] {Energy_data_400_800_Re35We10.csv};
	\addplot+[mark size = 0.75pt, each nth point=20, filter discard warning=false, unbounded coords=discard]table [x={time},y={TotalPhiNorm},col sep=comma] {Energy_data_600_1200_Re35We10.csv};
	\addplot+[mark size = 0.75pt, each nth point=20, filter discard warning=false, unbounded coords=discard]table [x={time},y={TotalPhiNorm},col sep=comma] {Energy_data_800_1600_Re35We10.csv};
	\end{axis}
	\end{tikzpicture}
	\caption{ \textit{2D Single Rising Drop Test Case 1}
	(\cref{subsubsec:single_rising_drop_2D_t1}): (a) Comparison of the computed bubble shape against results
	from the literature at non-dimensional time $T = 4.2$; (b) comparison of the rise of the bubble centroid against results from the literature; (c) decay of the energy functional illustrating theorem \ref{th:energy_stability}; and (d) total mass conservation (integral of total $\phi$).}
	\label{fig:test_case1}
\end{figure}
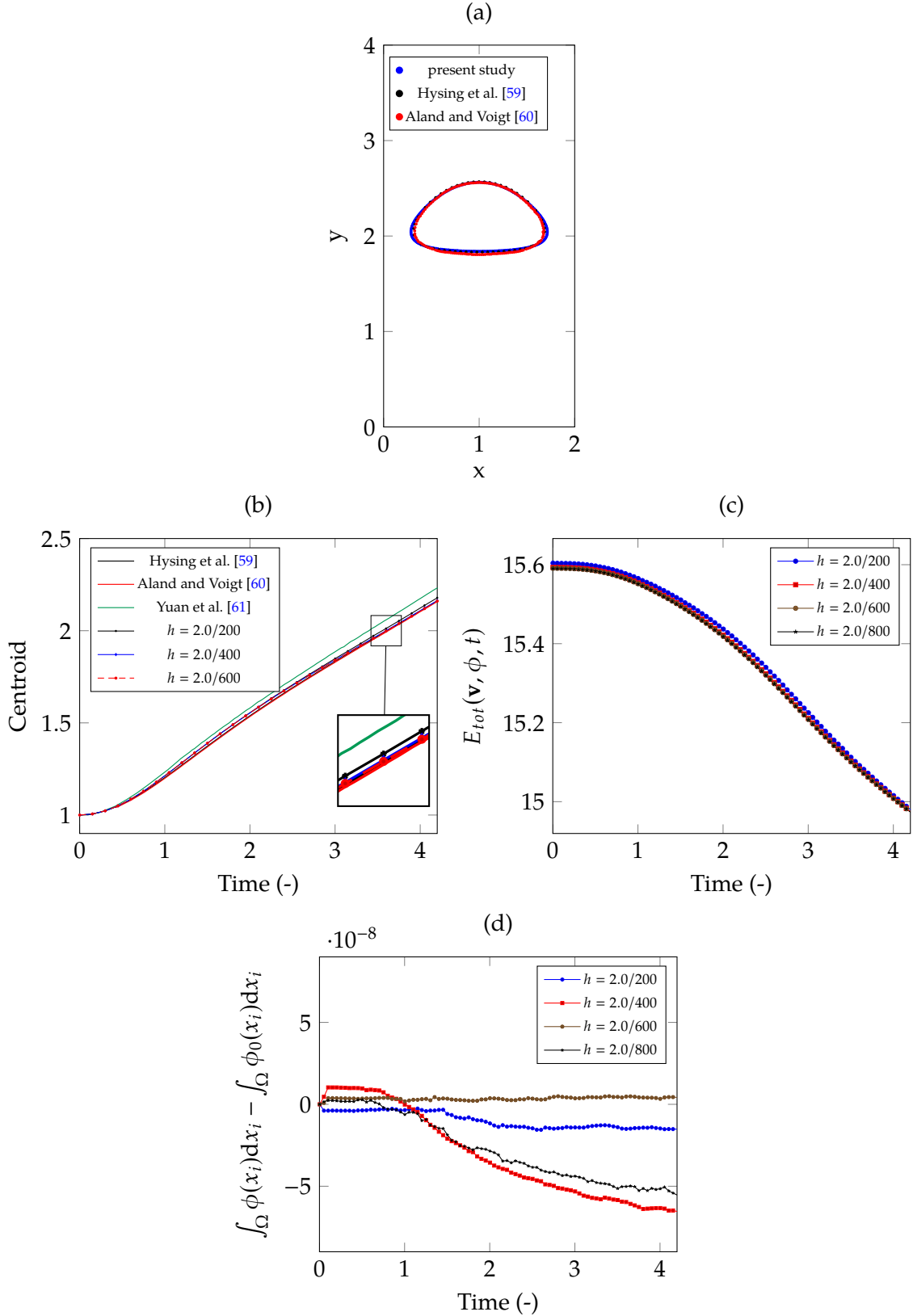

\subsubsection{Test case 2}
\label{subsubsec:single_rising_drop_2D_t2}
This test case considers a lower surface tension resulting in high deformations of the bubble as it rises. 
As before, we compare the bubble shape in~\cref{fig:test_case2} with benchmark quantities presented in three previous studies~\citep{Hysing2009,Aland2012,Yuan2017}.  Panel (a) shows the shape comparison with benchmark studies in the literature and we see an excellent agreement in the shape of the bubble. All simulations (our results and benchmarks) exhibit a skirted bubble shape. We see an excellent match in the overall shape of the bubble with some minor differences in the dynamics of the tails of the bubble. Specifically, we see that the tails of bubble in our case pinch-off to form satellite bubbles\footnote{Such instabilities require a very low $Cn$ number, as a very thin interfacial thickness is required to capture the dynamics of the thin tails of the bubble}.  We performed this simulation with a $Cn=0.0025$ and three different spatial resolutions.  We can see in panel (a) of~\cref{fig:test_case2} that our simulation captures this filament pinch off in the tails very well.  It is important to note that \citet{Aland2012, Yuan2017} did not observe these thin tails and 
pinch-off, whereas \citet{Hysing2009} did observe pinch-off of the tails and satellite bubbles.  The dynamics of this bubble tail is highly dependent on the numerical method used. 

Panel (b) of~\cref{fig:test_case2} shows comparison of centroid location with respect to time. Again we see an excellent agreement with all three previous benchmark studies.  We can see from the magnified inset in panel (b) of~\cref{fig:test_case2} that as we keep increasing the mesh resolution the plot approaches the benchmark studies and we see an almost exact overlap between the benchmark and cases with $h = 2/1000$ and $h = 2/2000$ demonstrating spatial convergence. 
Next, we report the evolution of the energy functional defined in \cref{eqn:energy_functional} for test case 2.  Panel (c) of~\cref{fig:test_case2}~shows the decay of the total energy functional in accordance with the energy stability condition for all three spatial resolutions of $h = 2.0/800$, $h = 2.0/1000$, and $h = 2.0/1200$. Finally, panel (d) of~\cref{fig:test_case2} shows the total mass of the system in comparison with the total initial mass of the system. We can see that for all spatial resolution the change in the total mass with respect to the initial total mass is of the order of \num{1e-8}. This illustrates that the numerical method satisfies mass conservation over long time horizons.

\begin{figure}[H]
	\centering
	\begin{tikzpicture}
	\begin{axis}[width=0.55\linewidth,scaled y ticks=true,xlabel={$\mathrm{x}$},ylabel={$\mathrm{y}$},legend style={nodes={scale=0.65, transform shape}}, ymin=0, ymax=4, ytick distance=1.0,  xtick={0.0, 1.0, 2}, title={(a)},
	legend style={nodes={scale=0.95, transform shape}, row sep=2.5pt},
	legend entries={present study, \citet{Hysing2009}, \citet{Aland2012}, \citet{Yuan2017}},
	legend pos= north west,
	legend image post style={scale=1.0},
	unit vector ratio*=1 1 1,
	xmin=0, xmax=2, 
	legend image post style={scale=3.0}
	]
	\addplot [only marks,mark size = 0.5pt,color=blue, each nth point=2, filter discard warning=false, unbounded coords=discard] table [x={x},y={y},col sep=comma] {bubble_shape_Re35We125.csv};
	\addplot [only marks,mark size = 0.5pt,color=black,each nth point=1, filter discard warning=false, unbounded coords=discard] table [x={x},y={y},col sep=comma] {Hysing_Re35_We125_bubble_shape.csv};
	\addplot [only marks,mark size = 0.5pt,color=red,each nth point=1, filter discard warning=false, unbounded coords=discard] table [x={x},y={y},col sep=comma] {Aland_Voight_Re35_We125_bubble_shape.csv};
	\addplot [only marks,mark size = 0.5pt,color=ForestGreen,each nth point=1, filter discard warning=false, unbounded coords=discard] table [x={x},y={y},col sep=comma] {Yuan_Re35_We125_bubble_shape.csv};
	\end{axis}
	\end{tikzpicture}
	
	\begin{tikzpicture}[spy using outlines={rectangle, magnification=3, size=1.5cm, connect spies}]
	\begin{axis}[width=0.45\linewidth,scaled y ticks=true,xlabel={Time (-)},ylabel={Centroid},legend style={nodes={scale=0.65, transform shape}}, xmin=0, xmax=4.2, ymin=0.9, ymax=2.8, ytick distance=0.5,  xtick={0.0, 1.0, 2, 3, 4}, title={(b)},
	legend style={nodes={scale=0.95, transform shape}, row sep=2.5pt},
	legend entries={\citet{Hysing2009}, \citet{Aland2012}, \citet{Yuan2017}, $h = 2.0/800$, $h = 2.0/1000$, $h = 2.0/1200$},
	legend pos= north west,
	legend image post style={scale=1.0}
	]
	\addplot [line width=0.1mm, color=black] table [x={time},y={centroid},col sep=comma] {Hysing_centroid_Re35We125.csv};
	\addplot [line width=0.1mm, color=red] table [x={time},y={centroid},col sep=comma] {aland_voight_centroid_Re35We125.csv};
	\addplot [line width=0.1mm, color=ForestGreen] table [x={time},y={centroid},col sep=comma] {Yuan_centroid_Re35We125.csv};
	\addplot+[mark size = 0.75pt]table [x={time},y={centroid},col sep=comma, each nth point=3, filter discard warning=false, unbounded coords=discard] {centerOfMass_800_1600.csv};	
	\addplot+[mark size = 0.75pt]table [x={time},y={centroid},col sep=comma, each nth point=3, filter discard warning=false, unbounded coords=discard] {centerOfMass_1000_2000.csv};
	\addplot+[mark size = 0.75pt]table [x={time},y={centroid},col sep=comma, each nth point=3, filter discard warning=false, unbounded coords=discard] {centerOfMass_1200_2400.csv};
	\coordinate (a) at (axis cs:3.6,2.05);
	\end{axis}
	\spy [black] on (a) in node  at (5,1.2);
	\end{tikzpicture}
	\hskip 5pt
	\begin{tikzpicture}
	\begin{axis}[width=0.45\linewidth,scaled y ticks=true,xlabel={Time (-)},ylabel={$E_{tot}(\vec{v},\phi,t)$},legend style={nodes={scale=0.65, transform shape}}, xmin=0, xmax=4.2, xtick={0,1,2,3,4},  title={(c)},
	legend style={nodes={scale=0.95, transform shape}, row sep=2.5pt},
	legend entries={$h = 2.0/800$, $h = 2.0/1000$, $h = 2.0/1200$},
	legend pos= north east,
	legend image post style={scale=1.0}]
	\addplot +[mark size = 0.75pt, each nth point=20, filter discard warning=false, unbounded coords=discard] table [x={time},y={TotalEnergy},col sep=comma] {Energy_data_800_1600.csv};
	\addplot +[mark size = 0.75pt, each nth point=20, filter discard warning=false, unbounded coords=discard] table [x={time},y={TotalEnergy},col sep=comma] {Energy_data_1000_2000.csv};
	\addplot +[mark size = 0.75pt, each nth point=20, filter discard warning=false, unbounded coords=discard] table [x={time},y={TotalEnergy},col sep=comma] {Energy_data_1200_2400.csv};
	\end{axis}
	\end{tikzpicture}

	\begin{tikzpicture}
	\begin{axis}[width=0.45\linewidth,scaled y ticks=true,xlabel={Time (-)},ylabel={$\int_{\Omega} \phi (x_i)\mathrm{d}x_i - \int_{\Omega} \phi_{0} (x_i)\mathrm{d}x_i$},legend style={nodes={scale=0.65, transform shape}}, xmin=0, xmax=4.2, xtick={0,1,2,3,4}, title={(d)},
	legend style={nodes={scale=0.95, transform shape}, row sep=2.5pt},
	legend entries={$h = 2.0/800$, $h = 2.0/1000$, $h = 2.0/1200$},
	legend pos= north west,
	legend image post style={scale=1.0},
	ymin=-9.5e-8,ymax=9.5e-8]
	\addplot+[mark size = 0.75pt, each nth point=20, filter discard warning=false, unbounded coords=discard]table [x={time},y={TotalPhiNorm},col sep=comma] {Energy_data_800_1600.csv};
	\addplot+[mark size = 0.75pt, each nth point=20, filter discard warning=false, unbounded coords=discard]table [x={time},y={TotalPhiNorm},col sep=comma] {Energy_data_1000_2000.csv};
	\addplot+[mark size = 0.75pt, each nth point=20, filter discard warning=false, unbounded coords=discard]table [x={time},y={TotalPhiNorm},col sep=comma] {Energy_data_1200_2400.csv};
	\end{axis}
	\end{tikzpicture}
		\caption{\textit{2D Single Rising Drop Test Case 2} 
	(\cref{subsubsec:single_rising_drop_2D_t2}): (a) Comparison of the computed bubble shape against results
	from the literature at non-dimensional time $T = 4.2$; (b) comparison of the rise of the bubble centroid against results from the literature; (c) decay of the energy functional illustrating theorem \ref{th:energy_stability}; and (d) total mass conservation (integral of total $\phi$).}
	\label{fig:test_case2}
\end{figure}


Another important aspect we report is the mass conservation of the lighter bubble. The mass of the bubble is a small fraction ($\sim 10\%$) of the total volume and is thus a more stringent test of mass conservation. We plot mass loss (mass of bubble at time t - initial mass of bubble) as a function of time in \cref{fig:BR_massLoss} for both test cases.
For test case 1 panel (a) of~\cref{fig:BR_massLoss} shows the mass loss of bubble as a function of time and we can see that for all four spatial resolutions, the change in mass of the bubble compared to initial mass of the bubble\footnote{we calculate this by integrating $\phi < 0$ over the domain} is within the bound of $\pm \num{5e-3}$, which shows that the mass loss of the bubble is less than 0.5\% over this long time horizon.  For test 2, on the other hand, mass conservation of the bubble is typically difficult to capture because of the pinch off and formation of small satellite bubbles.  Even for this challenging case (panel (b) of~\cref{fig:BR_massLoss}), the change in mass of the bubble compared to initial mass of the bubble is within the bound of $\pm \num{5e-3}$ which is again less than a 0.5\% change.  

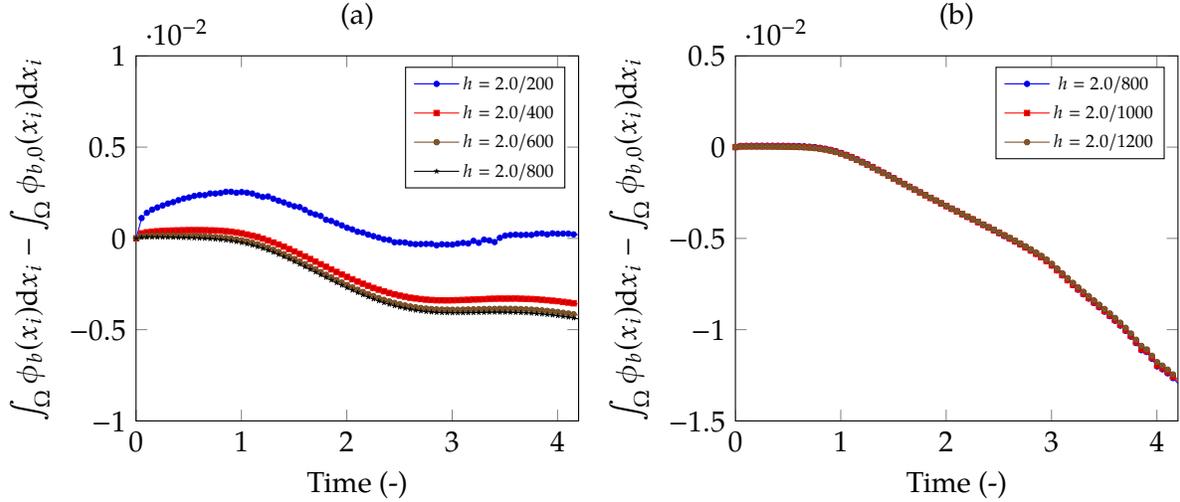
\begin{figure}[]
	\centering
	\begin{tikzpicture}
	\begin{axis}[width=0.45\linewidth,scaled y ticks=true,xlabel={Time (-)},ylabel={$\int_{\Omega} \phi_{b} (x_i)\mathrm{d}x_i - \int_{\Omega} \phi_{b,0} (x_i)\mathrm{d}x_i$},legend style={nodes={scale=0.65, transform shape}}, xmin=0, xmax=4.2, xtick={0,1,2,3,4},  title={(a)},
	legend style={nodes={scale=0.95, transform shape}, row sep=2.5pt},
	legend entries={$h = 2.0/200$, $h = 2.0/400$, $h = 2.0/600$, $h = 2.0/800$},
	legend pos= north east,
	legend image post style={scale=1.0},
	ymin=-1e-2,ymax=1e-2]
	\addplot +[mark size = 1pt, each nth point=20, filter discard warning=false, unbounded coords=discard] table [x={time},y={TotalPhiMinusOnePhaseNorm},col sep=comma] {Energy_data_200_400_Re35We10.csv};
	\addplot +[mark size = 1pt, each nth point=20, filter discard warning=false, unbounded coords=discard] table [x={time},y={TotalPhiMinusOnePhaseNorm},col sep=comma] {Energy_data_400_800_Re35We10.csv};
	\addplot +[mark size = 1pt, each nth point=20, filter discard warning=false, unbounded coords=discard] table [x={time},y={TotalPhiMinusOnePhaseNorm},col sep=comma] {Energy_data_600_1200_Re35We10.csv};
	\addplot +[mark size = 1pt, each nth point=20, filter discard warning=false, unbounded coords=discard] table [x={time},y={TotalPhiMinusOnePhaseNorm},col sep=comma] {Energy_data_800_1600_Re35We10.csv};
	\end{axis}
	\end{tikzpicture}
	\hskip 5pt
	\begin{tikzpicture}
	\begin{axis}[width=0.45\linewidth,scaled y ticks=true,xlabel={Time (-)},ylabel={$\int_{\Omega} \phi_{b} (x_i)\mathrm{d}x_i - \int_{\Omega} \phi_{b,0} (x_i)\mathrm{d}x_i$},legend style={nodes={scale=0.65, transform shape}}, xmin=0, xmax=4.2, xtick={0,1,2,3,4},  title={(b)},
	legend style={nodes={scale=0.95, transform shape}, row sep=2.5pt},
	legend entries={$h = 2.0/800$, $h = 2.0/1000$, $h = 2.0/1200$},
	legend pos= north east,
	legend image post style={scale=1.0},
	ymin=-1.5e-2,ymax=5e-3]
	\addplot +[mark size = 1pt, each nth point=20, filter discard warning=false, unbounded coords=discard] table [x={time},y={TotalPhiMinusOnePhaseNorm},col sep=comma] {Energy_data_800_1600.csv};
	\addplot +[mark size = 1pt, each nth point=20, filter discard warning=false, unbounded coords=discard] table [x={time},y={TotalPhiMinusOnePhaseNorm},col sep=comma] {Energy_data_1000_2000.csv};
	\addplot +[mark size = 1pt, each nth point=20, filter discard warning=false, unbounded coords=discard] table [x={time},y={TotalPhiMinusOnePhaseNorm},col sep=comma] {Energy_data_1200_2400.csv};
	\end{axis}
	\end{tikzpicture}
	\caption{\textit{Bubble Mass Conservation for the 2D Rising Drop Test Cases:} (a) Mass conservation of bubble (integral of $\phi < 0$) for test case 1 (\cref{subsubsec:single_rising_drop_2D_t1}); (b) mass conservation of bubble (integral of $\phi < 0$) for test case 2
	(\cref{subsubsec:single_rising_drop_2D_t2})}
	\label{fig:BR_massLoss}
\end{figure}

\subsection{Rayleigh-Taylor instability: 2D simulations}
\label{subsec:rayleigh_taylor_2D}
Performance of the framework at higher Reynolds numbers and large changes in the topology of the interface can be demonstrated by simulating the Rayleigh-Taylor instability.  While the bubble rise case is an interplay between surface tension and buoyancy, the physics of a Rayleigh-Taylor instability is dominated by buoyancy.  A lot of studies in the literature also switch off the surface tension forcing terms in the momentum equations (see \citep{Xie2015,Tryggvason1990,Li1996,Guermond2000} for examples).  Here, the choice of non-dimensional numbers ensures that surface tension effect is small (high Weber numbers).  The setup is as follows: the heavier fluid is placed on top of lighter fluid and the interface is perturbed. The heavier fluid on top penetrates into the lighter fluid and buckles, which generates instabilities.  This interface motion is very difficult to track in interface resolved simulations (like the current ones), as the changes in the topology of the interface are large and Rayleigh-Taylor instabilities generally encompass turbulent conditions which calls for resolving finer scales.  We non-dimensionalize  the problem by selecting the width of the channel as the characteristic length scale and the density of the lighter fluid as the characteristic specific density.  Just like in the case of bubble rise we use buoyancy-based scaling, setting the Froude number ($Fr = u^2/(gD)$) to 1.0, which fixes the non-dimensional velocity scale to be $u = \sqrt{gD}$, where $g$ is the gravitational acceleration, and $D$ is the width of the channel.  Using this velocity to calculate the Reynolds number, we get $Re = \rho_L g^{1/2}D^{3/2}/\mu_L$, where $\rho_L$ and $\mu_L$ are the specific density and specific viscosity of the light fluid respectively. We set the Reynolds number at 3000.  Furthermore, the same choice of non-dimensional scales leads to a Weber number ($We = \rho_c g D^2/\sigma$).  To compare our results with previous studies, we simulate with same initial conditions as presented in \citet{Tryggvason1988, Guermond2000, Ding2007, Xie2015}.  The We number is selected to be 1000, so that the effect of surface tension is small on the evolution of interface.  In this case, similar to the bubble rise case we have chosen specific density of the light fluid to non-dimensionalize, therefore $\rho_{+} = 1.0$.  Further, for the 2D simulations we choose two density ratios ($\rho_{+}/\rho_{-}$) of 0.33, and 0.1 respectively.  Similarly, $\nu_{+}/\nu_{-}$ is the viscosity ratio which is selected to be 1.0.  In the literature Atwood number ($At$) is often used to parametrize the dependence on density ratio which is given by $At = \left(\rho_{+} - \rho_{-}\right)/\left(\rho_{+} + \rho_{-}\right)$.  For the density ratios of 0.33, and 0.1, the Atwood numbers are $At = 0.5$, and $At = 0.82$ respectively. The boundary conditions we use are no-slip for velocity on all the walls and no flux conditions for $\phi$ and $\mu$. We assume a 90 degree wetting angle for both the fluids.  The simulations were performed using a time step of $10^{-4}$.  

\Cref{fig:rt2d} shows the snapshots of the interface shape as it evolves in time.  We can see the heavier fluid penetrating in the light fluid, and the light fluid rises up near the wall.  For both cases of $At = 0.5$ and $At = 0.82$ a number of previous studies have presented the location of top front and bottom front as a function of time.  Panel (a) \cref{fig:RT_2D_comparison} shows the comparison of locations of bottom and top front of the interface with previous studies~\citep{Xie2015,Tryggvason1990,Li1996,Guermond2000}.  We can clearly see that the results from the current study match the previous benchmarks very well.  Panel (b) from \cref{fig:RT_2D_comparison} shows the decay of energy functional in line with~\cref{th:energy_stability} for $At = 0.5$.  Panel (c) from \cref{fig:RT_2D_comparison} shows the change in the total mass with respect to the initial total mass. We observe that it is of the order of $10^{-13}$. Therefore, we see excellent mass conservation even with high amount of deformation of the interface over very large time horizons (over 30,000 time steps).

The convergence criterion for both the 2D Rayleigh-Taylor test cases are as follows.  For the Navier-Stokes (NS) block we use a relative tolerance of $10^{-6}$, and for the Cahn-Hilliard (CH) block the relative tolerance is set to $10^{-10}$.  For the linear solves within each Newton iteration, for NS we use a relative tolerance of $10^{-7}$, and for CH relative tolerance is set to $10^{-7}$. The tolerance for block iteration errors is set to $10^{-4}$ ($\text{block}_\text{tol}$ from \cref{fig:flowchart_block}).
 
\begin{figure}[]
	\centering
	\begin{tabular}{C{0.18\textwidth}C{0.18\textwidth}C{0.18\textwidth}C{0.18\textwidth}C{0.18\textwidth}}
		\subfigure [$t = 0.0$] {
			\includegraphics[width=\linewidth]{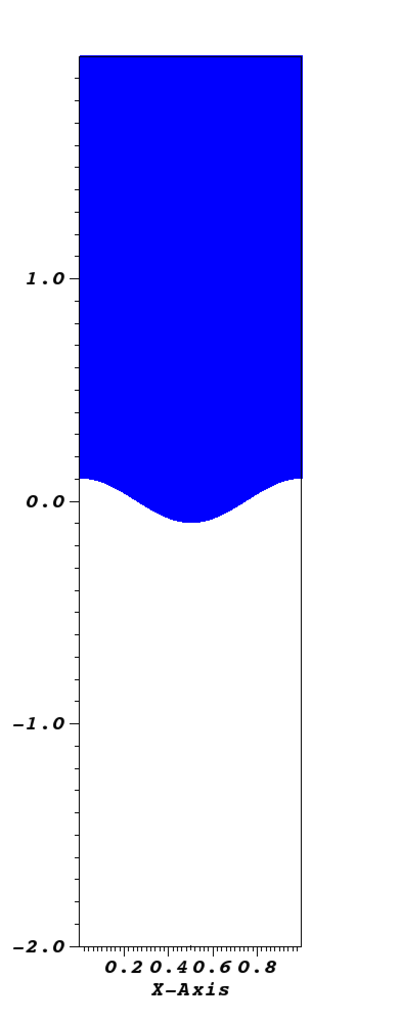}
			\label{subfig:rt_snap_1}
		} &
		\subfigure [$t = 0.8080$] {
			\includegraphics[width=\linewidth]{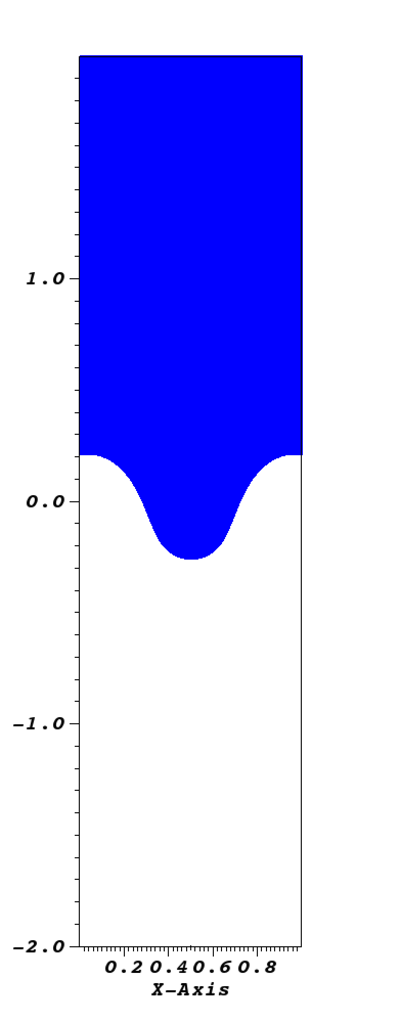}
			\label{subfig:rt_snap_2}
		} & 
		\subfigure [$t = 1.232$] {
			\includegraphics[width=\linewidth]{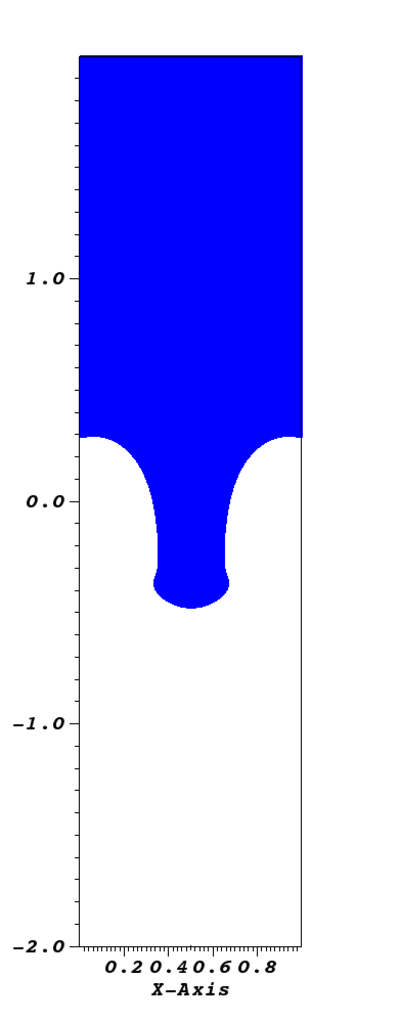}
			\label{subfig:rt_snap_3}
		} &
		
		\subfigure [$t = 1.6059$] {
			\includegraphics[width=\linewidth]{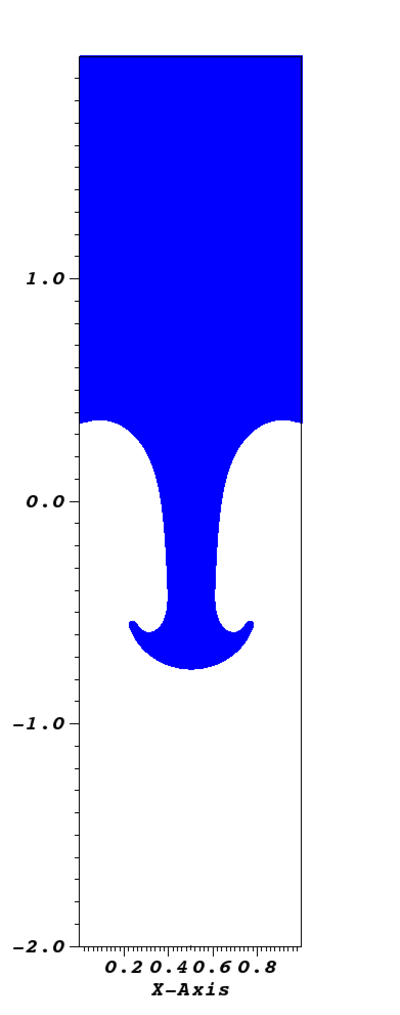}
			\label{subfig:rt_snap_4}
		} &
		\subfigure [$t = 2.0503$] {
			\includegraphics[width=\linewidth]{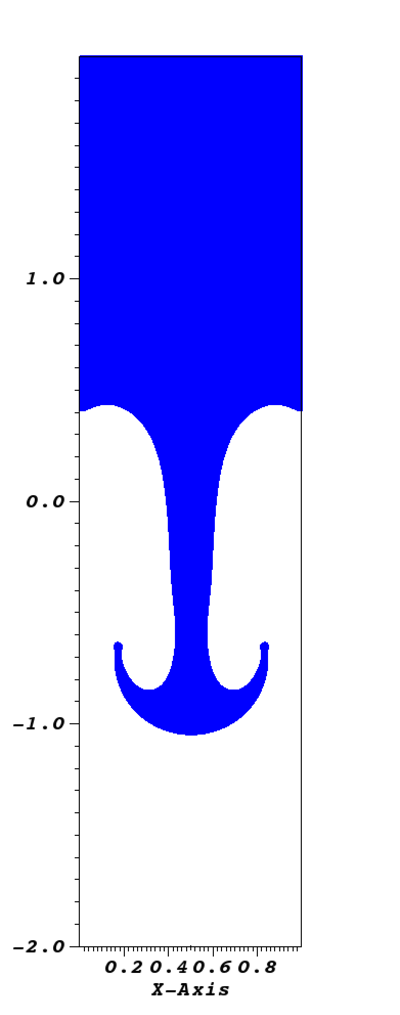}
			\label{subfig:rt_snap_5}
		} \\ 
		\subfigure [$t = 2.3533$] {
			\includegraphics[width=\linewidth]{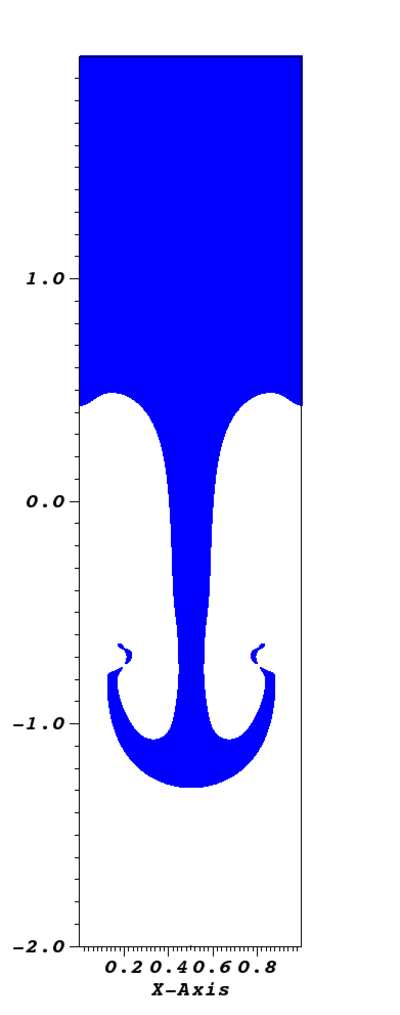}
			\label{subfig:rt_snap_6}
		} &
		
		\subfigure [$t = 2.5351$] {
			\includegraphics[width=\linewidth]{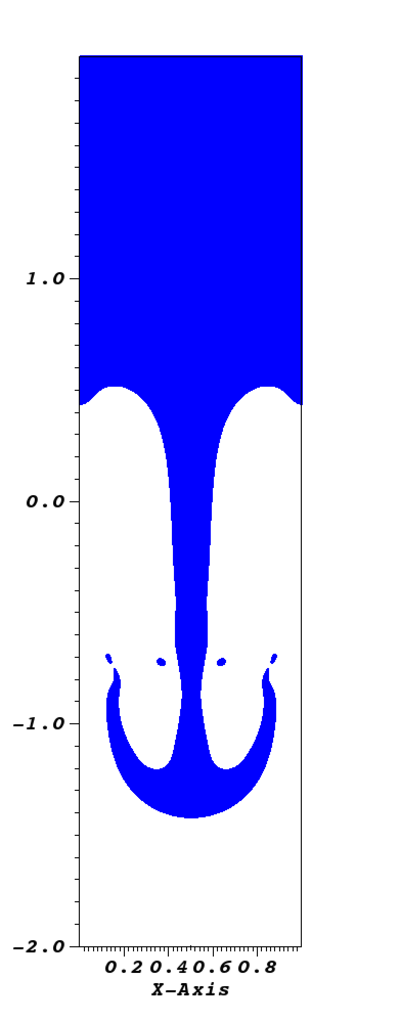}
			\label{subfig:rt_snap_7}
		} &
		\subfigure [$t = 2.9391$] {
			\includegraphics[width=\linewidth]{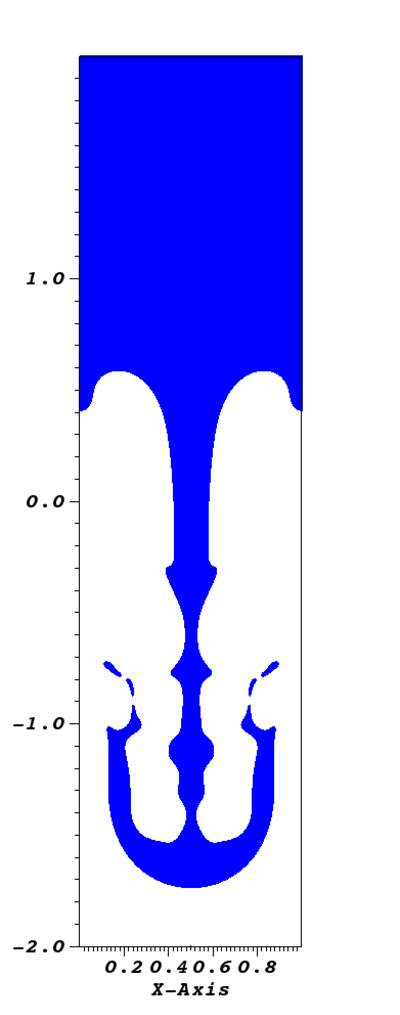}
			\label{subfig:rt_snap_8}
		} & 
		\subfigure [$t = 3.019$] {
			\includegraphics[width=\linewidth]{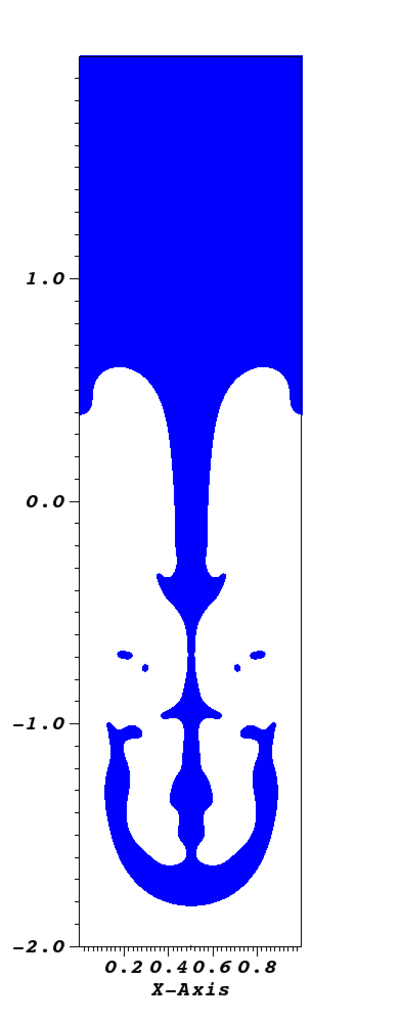}
			\label{subfig:rt_snap_9}
		} &
		\subfigure [$t = 3.2017$] {
			\includegraphics[width=\linewidth]{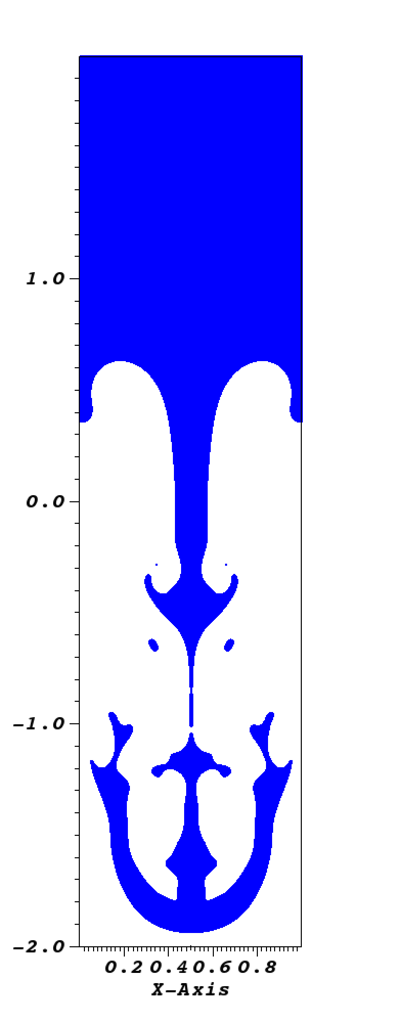}
			\label{subfig:rt_snap_10}
		}  
	\end{tabular}
	\caption{\textit{Rayleigh-Taylor instability in 2D} (\cref{subsec:rayleigh_taylor_2D}): Evolution of the interface as a function of time for $At = 0.82$ (density ratio of 0.1).}
	\label{fig:rt2d}
\end{figure}

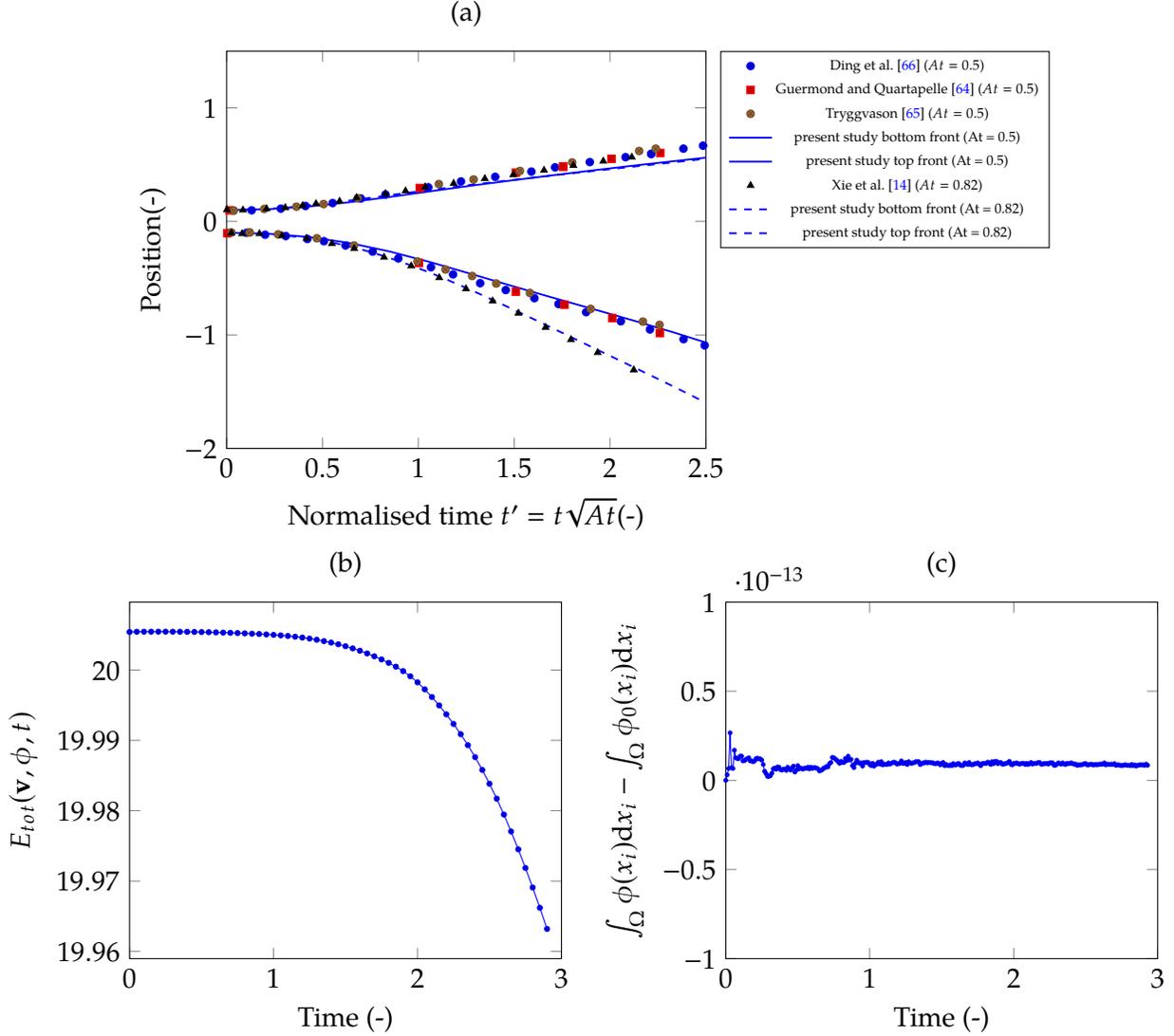
\begin{figure}[H]
	\centering	
	\begin{tikzpicture}[]
	\begin{axis}[width=0.5\linewidth,scaled y ticks=true,xlabel={Normalised time $t' = t \sqrt{At}$(-)},ylabel={Position(-)},legend style={nodes={scale=0.65, transform shape}}, xmin=0, xmax=2.5, ymin=-2, ymax=1.5, ytick distance=1.0,  xtick={0.0, 0.5, 1.0, 1.5, 2, 2.5}, title={(a)},
	legend style={nodes={scale=0.95, transform shape}, row sep=2.0pt},
	legend entries={\citet{Ding2007}~($At = 0.5$), \citet{Guermond2000}~($At = 0.5$), \citet{Tryggvason1988}~($At = 0.5$), present study bottom front (At = 0.5), present study top front (At = 0.5), \citet{Xie2015}~($At = 0.82$), present study bottom front (At = 0.82), present study top front (At = 0.82) },
	legend pos= outer north east,
	legend image post style={scale=1.0},
	legend style={font=\footnotesize}
	]
	\addplot +[only marks,mark size = 1.5pt] table [x={time},y={position},col sep=comma] {Ding_combined.csv};
	\addplot +[only marks,mark size = 1.5pt] table [x={time},y={position},col sep=comma] {Guermond_combined.csv};
	\addplot +[only marks,mark size = 1.5pt] table [x={time},y={position},col sep=comma] {Tryggvason_combined.csv};	
	\addplot [line width=0.25mm, color = blue]table [x={time},y={bottomFront},col sep=comma, each nth point=3, filter discard warning=false, unbounded coords=discard] {extentsRTinstability_At0dot5.csv};
	\addplot [line width=0.25mm, color = blue]table [x={time},y={topFront},col sep=comma, each nth point=3, filter discard warning=false, unbounded coords=discard] {extentsRTinstability_At0dot5.csv};
	\addplot [only marks,mark size = 1.5pt, mark=triangle*, color=black, each nth point=3, filter discard warning=false, unbounded coords=discard] table [x={time},y={position},col sep=comma] {xie_combined_At0dot82.csv};
	\addplot [line width=0.25mm, color = blue, dashed]table [x={time},y={bottomFront},col sep=comma, each nth point=3, filter discard warning=false, unbounded coords=discard] {extentsRTinstability_At0dot82.csv};
	\addplot [line width=0.25mm, color = blue, dashed]table [x={time},y={topFront},col sep=comma, each nth point=3, filter discard warning=false, unbounded coords=discard] {extentsRTinstability_At0dot82.csv};
	\end{axis}
	\end{tikzpicture}
	
	\begin{tikzpicture}
	\begin{axis}[width=0.46\linewidth,scaled y ticks=true,xlabel={Time (-)},ylabel={$E_{tot}(\vec{v},\phi,t)$},legend style={nodes={scale=0.65, transform shape}}, xmin=0, xmax=3, xtick={0,1,2,3},  title={(b)},
	]
	\addplot +[mark size = 1pt, each nth point=500, filter discard warning=false, unbounded coords=discard] table [x={time},y={TotalEnergy},col sep=comma] {Energy_data_At0dot5.csv};
	\end{axis}
	\end{tikzpicture}
	\hskip 5pt
	\begin{tikzpicture}
	\begin{axis}[width=0.46\linewidth,scaled y ticks=true,xlabel={Time (-)},ylabel={$\int_{\Omega} \phi (x_i)\mathrm{d}x_i - \int_{\Omega} \phi_{0} (x_i)\mathrm{d}x_i$},legend style={nodes={scale=0.65, transform shape}}, xmin=0, xmax=3.0, xtick={0,1,2,3,4}, title={(c)},
	ymin=-1e-13,ymax=1e-13
	]
	\addplot+[mark size = 0.75pt, each nth point=100, filter discard warning=false, unbounded coords=discard]table [x={time},y={TotalPhiNorm},col sep=comma] {Energy_data_At0dot5.csv};
	\end{axis}
	\end{tikzpicture}
	\caption{\textit{Rayleigh-Taylor instability 2D} (\cref{subsec:rayleigh_taylor_2D}): (a) Comparison of positions of top and bottom front of the interface with literature; (b) decay of the energy functional illustrating theorem \ref{th:energy_stability} for $At = 0.5$; (c) total mass conservation (integral of total $\phi$) for $At = 0.5$.}
	\label{fig:RT_2D_comparison}
\end{figure}

\subsection{3D simulations and comparison with experiments: Single rising bubble}
\label{subsec:single_rising_drop}
Now that we have shown that our numerical method compares well with other benchmark numerical experiments in the literature, we move to testing the method in comparison to experimental results.  We now present results for single rising bubble in 3D with the octree based meshes.  We compare these simulations with experimental data presented in the literature.  The non-dimensional setting is same as discussed in~\cref{subsec:single_rising_drop_2D}.  In all the numerical experiments for single bubble rise we keep the viscosity ratio to be 100.  We present numerical experiments with density ratios of 100, 1000, 10000, to show the robustness of the algorithm to large density ratios.  See \cref{fig:bubblerisesetup} for a schematic of the computational domain selected.  The boundary conditions are no-slip on all walls, and zero flux for both $\mu$ and $\phi$, which are identical to ones used for functions spaces in the proofs.  We use the biCGstab (bcgs) linear solver from the PETSc suite along with the Additive Schwarz (ASM) preconditioner for the linear solves in the Newton iterations (see \cref{subsec:newton_iter}).  The details of the actual command line arguments used are given in \ref{sec:app_linear_solve}.

The convergence criterion for all the 3D bubble rise cases are as follows.  For the Navier-Stokes (NS) block we use a relative tolerance of $10^{-6}$, and for the Cahn-Hilliard (CH) block the relative tolerance is set to $10^{-8}$.  
The tolerance for block iteration errors is set to $10^{-4}$ ($\text{block}_\text{tol}$ from \cref{fig:flowchart_block}).

From the numerical experiments we can predict the non-dimensional terminal rise velocity $u_T$ of the bubbles as the velocity of the centre of mass of the bubbles.  This allows us to calculate Reynolds number based on terminal velocity as $Re_T = Ar \cdot u_T$.  $Re_T$ is our first metric for comparison with experiments reported in \citet{Bhaga1981}.  The second metric to compare with experiments is chosen to be the terminal shape of the bubble.  To show the importance of energy stability, in the cases we present we use a fairly large time-step of $10^{-2}$.   

\begin{figure}[H]
\centering
\includegraphics[width=0.35\linewidth]{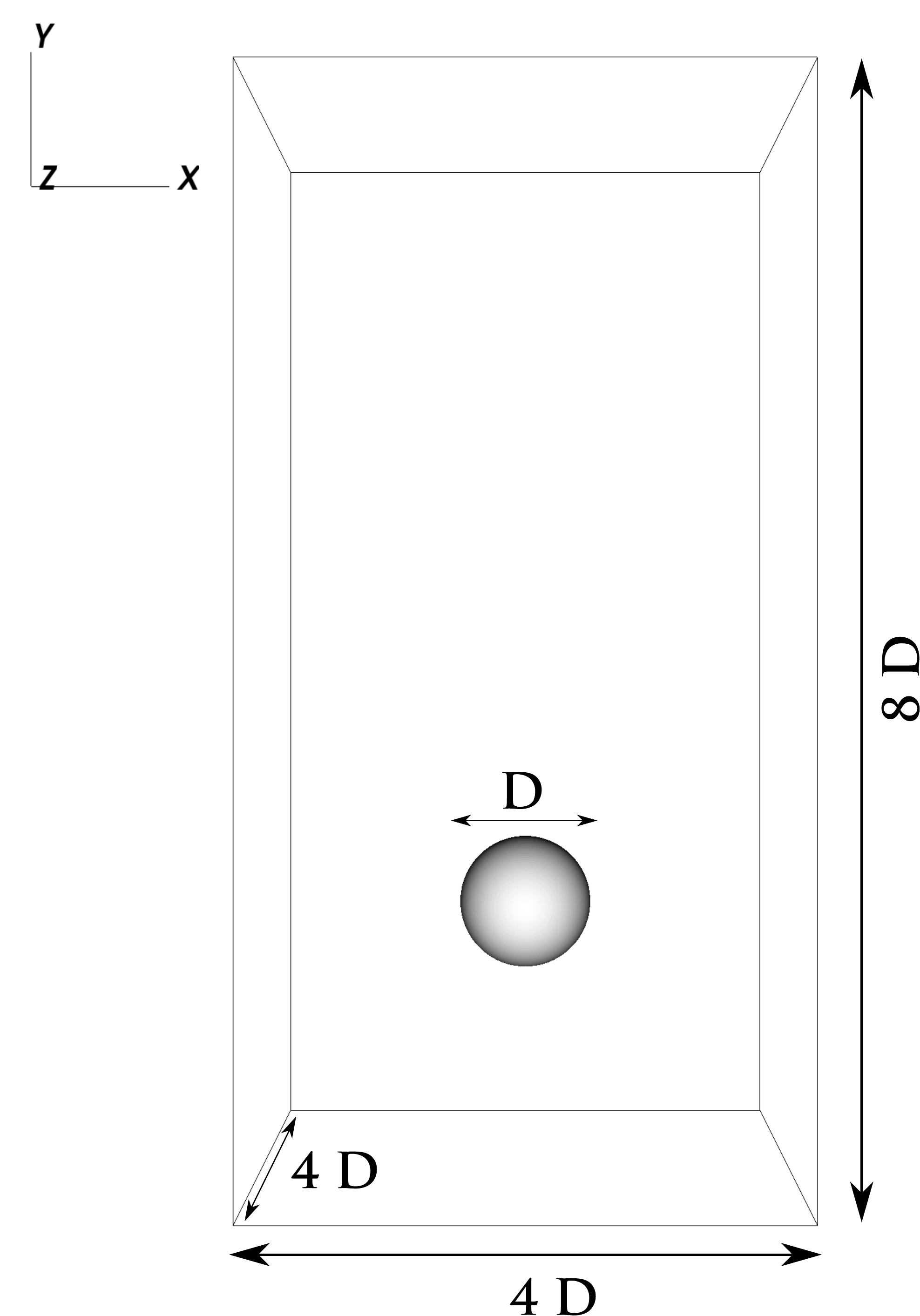}
\caption{Schematic of the computational domain used for 3D single bubble rise described in \cref{subsec:single_rising_drop}.}
\label{fig:bubblerisesetup}
\end{figure}

\subsubsection{Effect of Cn number}
\label{subsubsec:eff_cn_no}

As the model relies on selecting a computationally feasible thickness of the interface, it becomes important study the effect of $Cn$ (represents the non-dimensional thickness of the interface) on the performance of the model in comparison with the experiments.  The model approaches real physics in the limit of $Cn \rightarrow 0$, but as we decrease  $Cn$ we need to resolve smaller and smaller length scales. Therefore, decreasing Cahn number requires increasing mesh density, thereby making simulations more and more expensive.  One usually selects an 'optimum' $Cn_{opt}$, such that decreasing $Cn$ beyond this threshold, the quality of the solutions does not change (either measured by comparison with experiments or via lack of change of key quantities of interest).  To find this $Cn_{opt}$ number, we conduct three numerical experiments with Cn = 0.0125, 0.01, and 0.0075. We select the case of $Ar = 13.95$, $We = 116$. \cref{fig:effectOfCnNumber} shows the results from numerical experiments compared with the experiments for the respective $Cn$ numbers.  First of all, we can clearly see that the results show an excellent match with the experimental data, both in terms of shape of the bubble and the terminal Reynolds numbers.  An important observation is that there is a small difference between shape of the bubbles between case for $Cn = 0.0125$, and for the case of $Cn = 0.01$. But, there is no noticeable difference in the shape between the cases for $Cn = 0.01$ and $Cn = 0.0075$.  This indicates that an asymptotic behavior independent of $Cn$ number is reached and we set $Cn_{opt} = 0.01$. The rest of the numerical experiments presented in the paper for single bubble rising, we use a $Cn$ number of 0.01. The choice of $Cn_{opt}$ allows us to determine the adaptive meshing criterion for the case.  We maintain at least 6 elements with the size of $8D/2^{11}$ within the diffuse interface and a very coarse mesh everywhere else with element size of $8D/2^6$.  \cref{fig:mesh_adaption} shows the adaptivity of mesh as the air-water interface moves in the domain. 
 
\begin{figure}[H]
\centering
\includegraphics[width=0.5\linewidth]{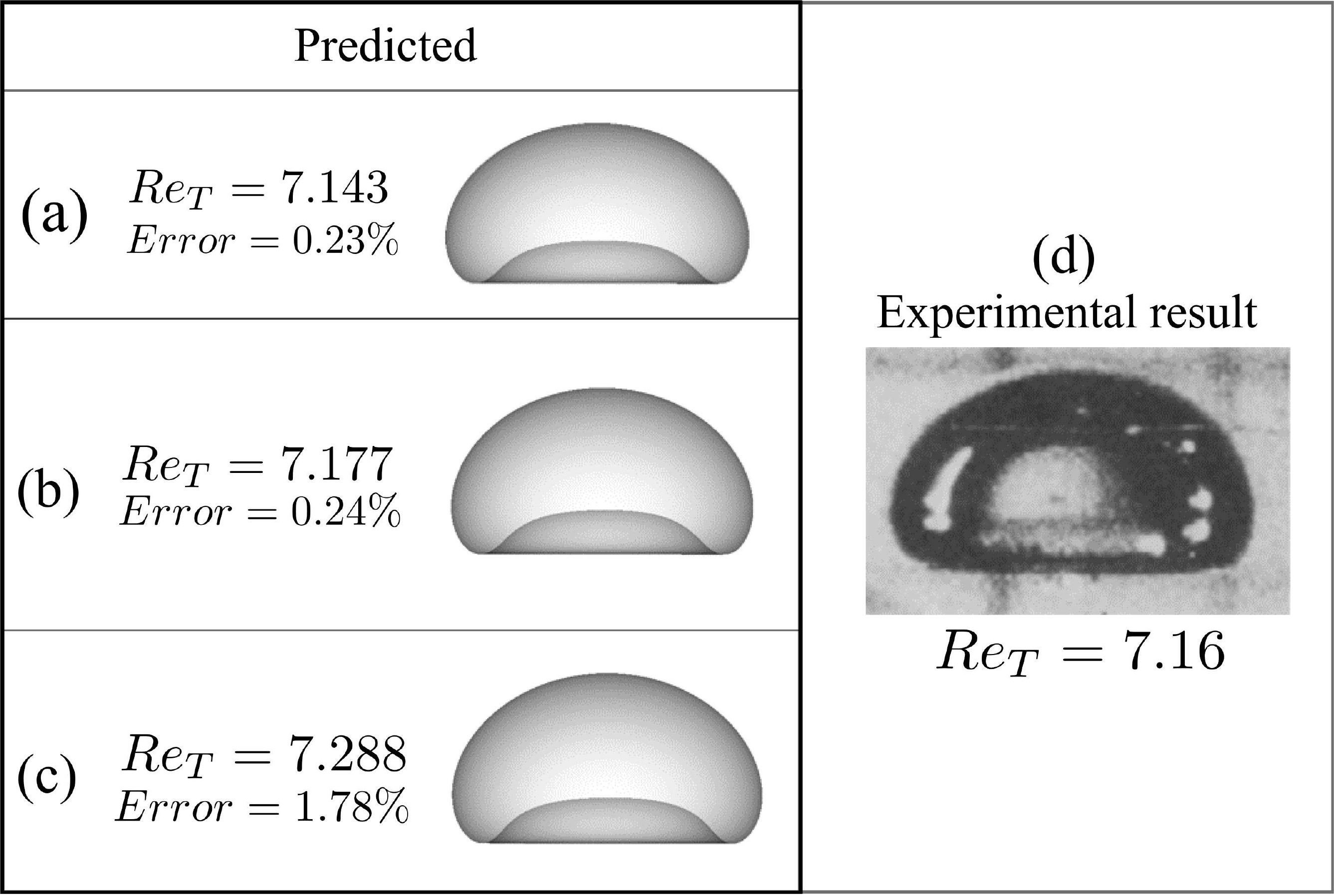}
\caption{\textit{Effect of Cn number} (\cref{subsubsec:eff_cn_no}): (a) Bubble shape for Cn = 0.0125; (b) bubble shape for Cn = 0.01; (c) bubble shape for Cn = 0.0075; (d) experimental result from~\citet{Bhaga1981}.}
\label{fig:effectOfCnNumber}
\end{figure}

\begin{figure}
	\centering
	\includegraphics[width=\linewidth]{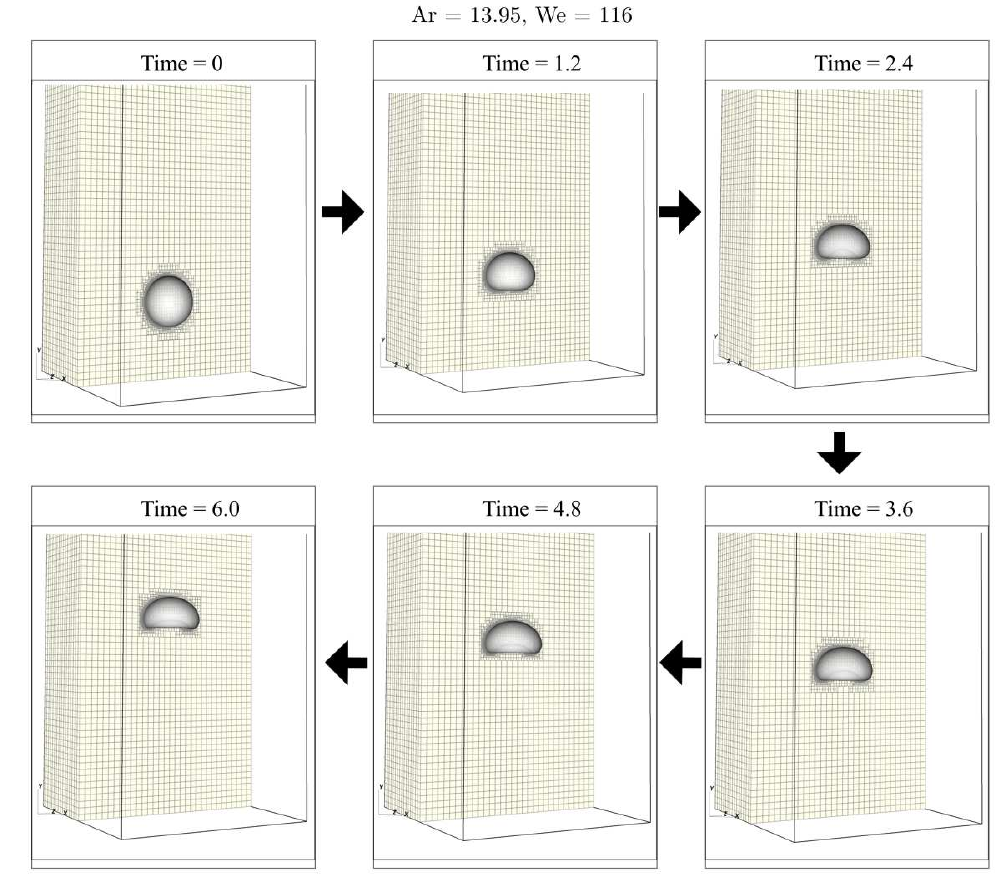}
	\caption{\textit{Evolution of mesh} (\cref{subsubsec:eff_cn_no}): Snapshots of the mesh at various time-points in the simulation.  Only half the mesh of the actual domain is shown in the figure to illustrate the refinement around the air-water interface.}
	\label{fig:mesh_adaption}
\end{figure}

\subsubsection{Effect of density ratio}
\label{subsubsec:eff_den_ratio}
We now investigate the effect of density ratio.  Typically with air-water system we see a density ratio $\rho_{+}/\rho_{-}$ (continuous to dispersed) of 1000.  We test the algorithm for three density ratios of $10^4$, $10^3$, and $10^2$.  \citet{Tryggvason2006} reported the effect of density ratios is primarily on the rise velocity of the bubble and the shape does not change after a threshold density ratio is high enough (\citet{Tryggvason2006} reported this threshold to be 50). Figure \ref{fig:effectOfDensityRatio} shows the independence of bubble shape for density three density ratios of $10^2$ (panel (a)), $10^3$ (panel (b)), and $10^4$ (panel (c)).  We observe that the shape predicted by the simulation for all the density ratios have no variation.  Therefore, we see the same independence of bubble shape on density ratio as reported by \citet{Tryggvason2006}.  We next investigate the effect of density ratio on the temporal variation of bubble rise velocity.  \cref{fig:effectOfDensityRatioRiseVel} shows the temporal evolution of rise velocity of the bubble for three density ratios.  It is clear that the difference between the curves is not very high, but the inset plots show that the rise velocity increases as the density ratio is increased. This behavior is reported in multiple studies in the literature \citep{Tryggvason2006,Hua2007,Balcazar2015}.  

\begin{figure}[H]
	\centering
	\includegraphics[width=0.5\linewidth]{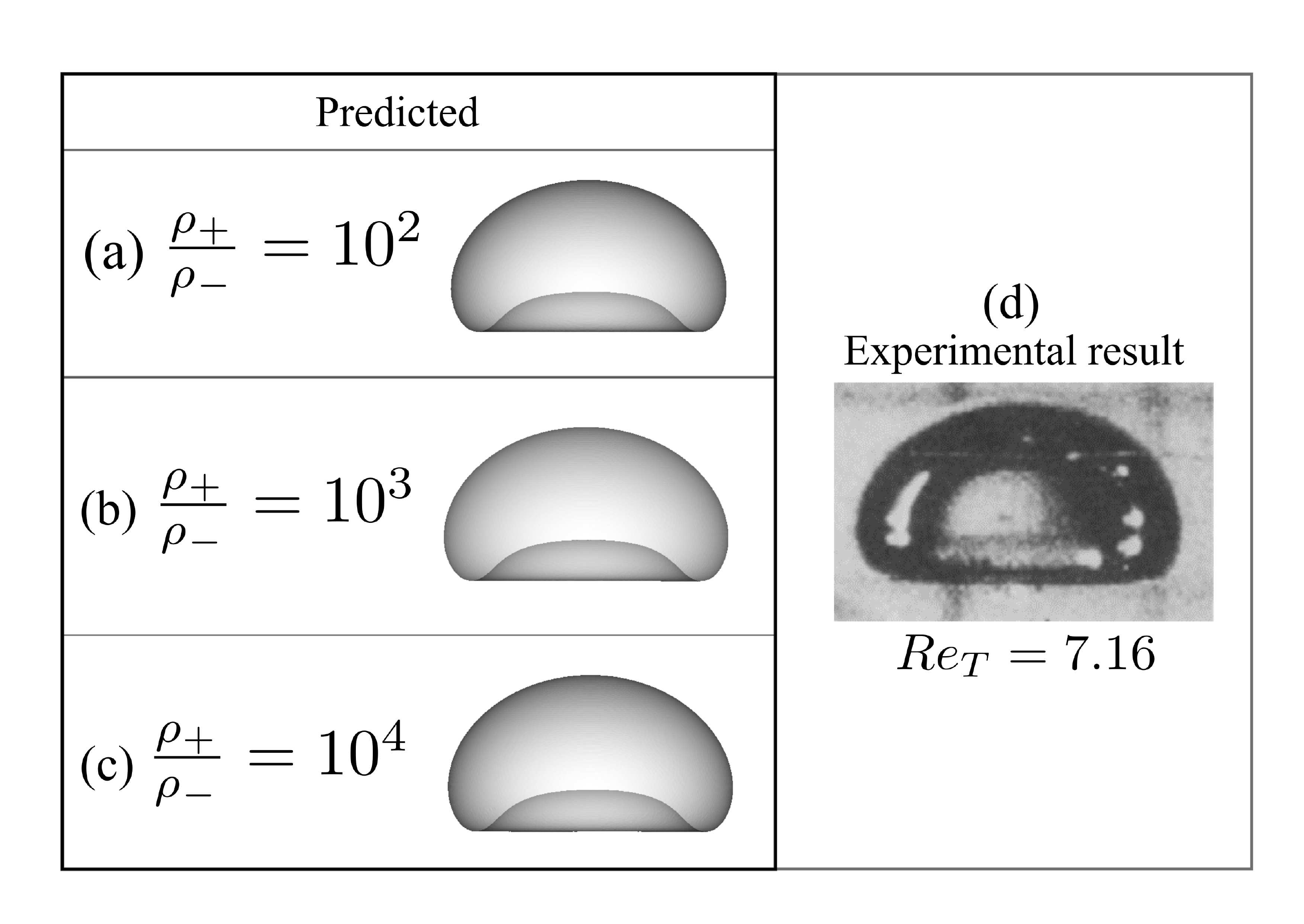}
	\caption{\textit{Effect of density ratio on bubble shape}
	(\cref{subsubsec:eff_den_ratio}): (a) Bubble shape for $\rho_{+}/\rho_{-} = 10^2$; (b) bubble shape for $\rho_{+}/\rho_{-} = 10^3$; (c) bubble shape for $\rho_{+}/\rho_{-} = 10^4$; (d) experimental result from \citet{Bhaga1981} (reproduced with permission from D. Bhaga, M. Weber, Bubbles in viscous liquids: shapes, wakes and velocities, Journal of fluid Mechanics 105 (1981) 61--85.)}
	\label{fig:effectOfDensityRatio}
\end{figure}


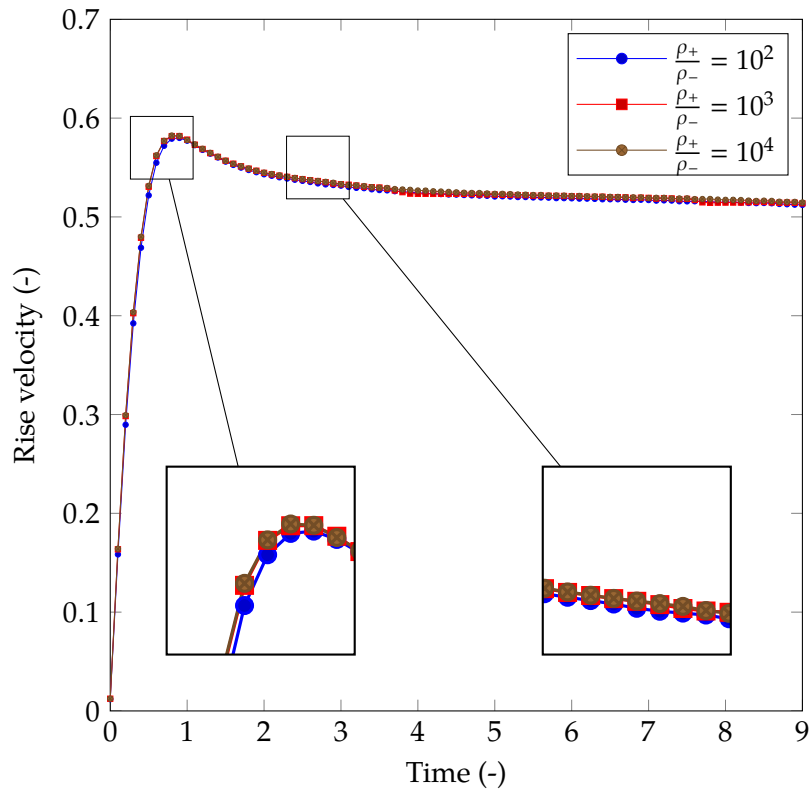
\begin{figure}[H]
	\centering
	\begin{tikzpicture}[spy using outlines={rectangle, magnification=3, size=2.5cm, connect spies}]
	\begin{axis}[width=0.65\linewidth, 
				height = 0.65\linewidth, 
				xmin=0, xmax=9, 
				ymin=0, ymax=0.7, 
				legend entries={${\frac{\rho_{+}}{\rho_{-}}} = 10^2$,${\frac{\rho_{+}}{\rho_{-}}} = 10^3$,${\frac{\rho_{+}}{\rho_{-}}} = 10^4$},
				legend style={nodes={scale=0.95, transform shape}, row sep=2.5pt},
				xlabel={Time (-)},ylabel={Rise velocity (-)},
				legend image post style={scale=2.0}
				]
	\addplot +[mark size = 1pt] table [x={time},y={densityRatio100},col sep=comma] {RT_RiseVel_data_Re14We116.csv};
	\addplot +[mark size = 1pt] table [x={time},y={densityRatio1000},col sep=comma] {RT_RiseVel_data_Re14We116.csv};
	\addplot +[mark size = 1pt] table [x={time},y={densityRatio10000},col sep=comma] {RT_RiseVel_data_Re14We116.csv};
	\coordinate (a) at (axis cs:0.67,0.57);
	\coordinate (b) at (axis cs:2.7,0.55);
	\end{axis}
	\spy [black] on (a)in node  at (2,2);
	\spy [black] on (b)in node  at (7,2);
	\end{tikzpicture}
	\caption{Effect of density ratio on rise velocity of the bubble
	(\cref{subsubsec:eff_den_ratio}).}
	\label{fig:effectOfDensityRatioRiseVel}
\end{figure}

\subsubsection{Comparison with other cases, energy stability}
\label{subsubsec:comp_summ}
We illustrate our numerical method with two other cases. We select a case where bubble deformation is not very high with $Ar = 6.54$ and $We = 116$, and another case with very high deformation (crowing effect) with $Ar = 30.83$ and $We = 339$. \Cref{fig:comparisonWithOtherCases} shows comparison of the numerical method with the aforementioned cases.  We see an excellent match between simulations and experiments with errors of less than $3\%$ in $Re_{T}$.  We emphasize the fact that the mesh is refined only near the interface of the bubble and it is quite coarse everywhere else in the domain, and we are using only linear elements.  This shows that the VMS based approximation accurately captures the evolution of the system. This is comparable to recent work by \citet{Yan2018} which uses higher order NURBS with levels sets with no adaptive meshing. 

We check whether the numerical method follows the theoretical energy stability proved in \cref{th:energy_stability}.  We present the evolution of the energy functional defined in \cref{eqn:energy_functional} for the case of $Ar = 6.54$ and $We = 116$.  \cref{fig:energy_stab} shows the decay of the total energy functional in accordance with the energy stability condition.  
Similar behavior of the energy functional is seen for all our bubble rise numerical experiments. 
\begin{figure}[H]
	\centering
	\includegraphics[width=0.65\linewidth]{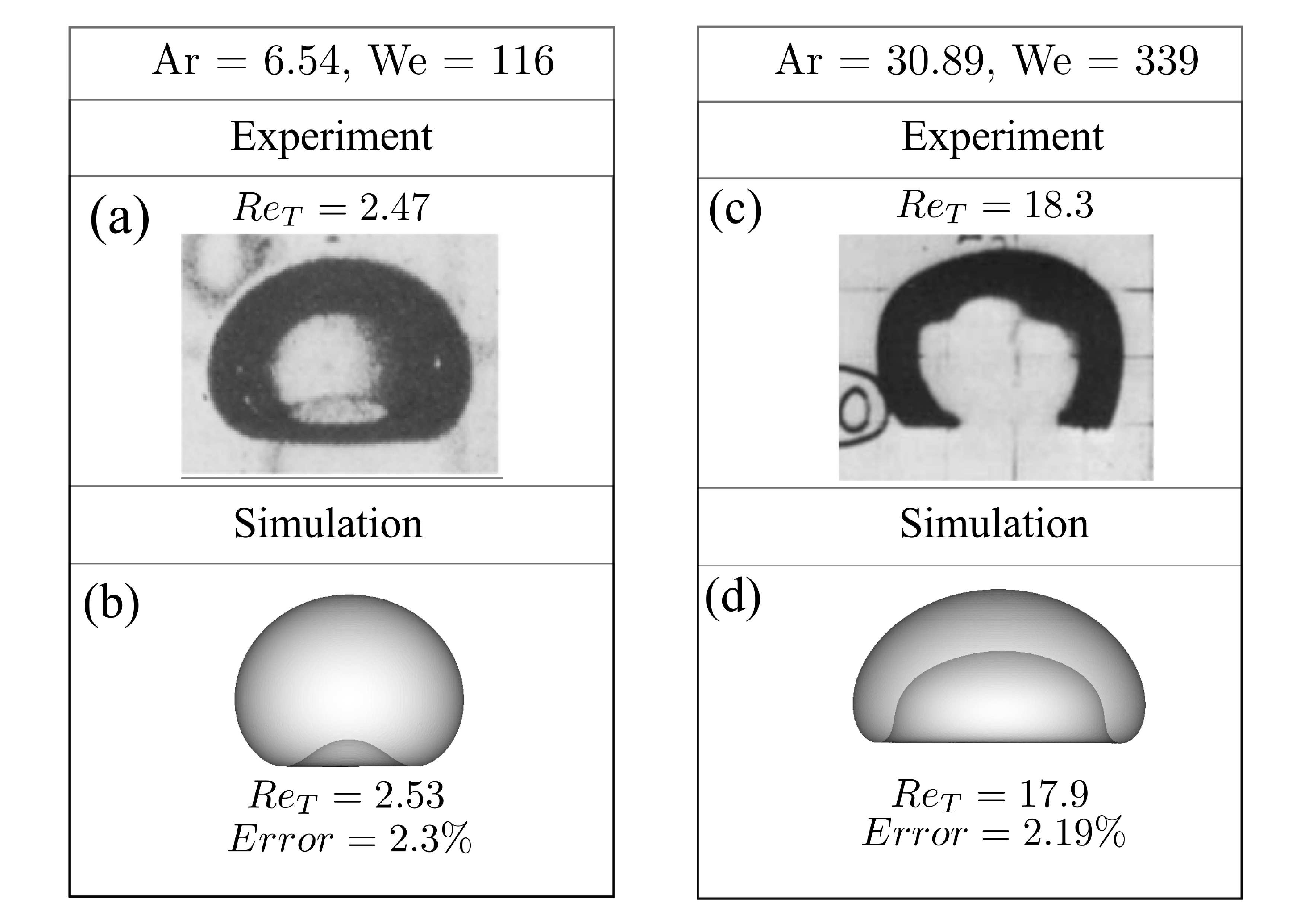}
	\caption{\textit{Comparison of the simulations with experiments}
	(\cref{subsubsec:comp_summ}): (a) Experimental terminal shape of the bubble with terminal velocity for $Ar = 6.54$, $We = 116$; (b) terminal bubble shape from simulations for the same conditions as panel (a); (c) experimental terminal shape of the bubble with terminal velocity for $Ar = 30.89$, $We = 339$; (d) terminal bubble shape from simulations for the same conditions as panel (c); (panel (a) and (c) reproduced with permission from D. Bhaga, M. Weber, Bubbles in viscous liquids: shapes, wakes and velocities, Journal of fluid Mechanics 105 (1981) 61--85.)}
	\label{fig:comparisonWithOtherCases}
\end{figure}


\begin{figure}[H]
	\centering
	\begin{tikzpicture}
	\begin{axis}[width=0.45\linewidth,scaled y ticks=true,xlabel={Time (-)},ylabel={$E_{tot}(v_i,\phi,t)$},legend style={nodes={scale=0.65, transform shape}}, xmin=0, xmax=10, ytick distance=0.5,  xtick={2,4,6,8,10}
	]
	\addplot +[mark size = 1pt, each nth point=20, filter discard warning=false, unbounded coords=discard] table [x={time},y={TotalEnergy},col sep=comma] {Energy_data_total_Re6_We116.csv};
	\end{axis}
	\end{tikzpicture}
	\caption{Decay of the energy functional illustrating theorem \ref{th:energy_stability} for the case of $Ar = 6.54$, $We = 116$
	(\cref{subsubsec:comp_summ}).} 
	\label{fig:energy_stab}
\end{figure}
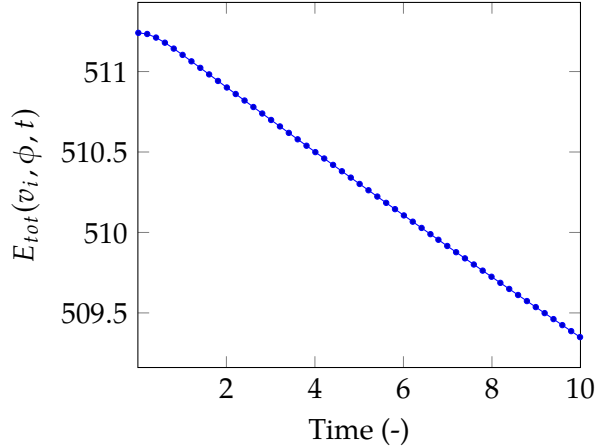

\subsection{3D simulations: Rayleigh-Taylor instability}
\label{subsec:rayleigh_taylor}

For the 3D simulations we choose an analytical initial condition for $\phi$ which governs the interface given by
\begin{align}
\phi(x_i) &= \tanh{\left(\frac{\left(h_0 - x_2\right) - g(x_i)}{\sqrt{2}Cn}\right)},\\ 
g(x_i) &= A \exp\left(-\left( \frac{ \left( x_1 - c_1 \right)^2 }{\lambda} + \frac{ \left( x_3 - c_3 \right)^2 }{\lambda} \right)\right).
\label{eq:initialConditionRT}
\end{align}
Here, $h_0$ is the location in the vertical direction for the interface, in this case chosen to be 2D from the bottom of the channel, $x_i$ is the position vector, $c_i$ is position of the centre of the Gaussian chosen to be $\left(0.5, 2.1, 0.5\right)$. $\lambda$ is the spread of the Gaussian, and $A$ is the amplitude of the Gaussian.  See \cref{fig:rayleighTaylorSetup} for a schematic of the computational domain selected along with the initial condition of the interface.   We use a $Cn$ number of 0.01 and the simulations resolve the large scales very well.  
We set the Reynolds number at 3000.  Further, the same choice of non-dimensionalising scales leads to a Weber number ($We = \rho_c g D^2/\sigma$).  We simulate two different initial conditions.  The We number is selected to be 1000.  Further, in the 3D simulations we choose the density ratio ($\rho_{+}/\rho_{-}$) of 0.33.  Similarly, $\nu_{+}/\nu_{-}$ is the viscosity ratio which is selected to be 1.0.

The simulations were performed using a time step of 0.0025. With the refinement near the interface being the finest at $4/2^{10}$ ensuring about 5 elements for resolving the diffuse interface, where as the refinement away from the interface was kept at $4/2^7$.  The boundary conditions we use are no-slip for velocity on all the walls and no flux conditions for $\phi$ and $\mu$.  We assume a 90 degree wetting angle for both the fluids.  An algebraic multigrid linear solver with additive Schwarz based smoothers is setup for the linear solves in the Newton iterations (see \cref{subsec:newton_iter}).  The details of the actual command line arguments used are given in \ref{sec:app_linear_solve}.

The convergence criterion for all the 3D Rayleigh-Taylor cases are as follows.  For the Navier-Stokes (NS) block we use a relative tolerance of $10^{-6}$, and for the Cahn-Hilliard (CH) block the relative tolerance is set to $10^{-8}$.  For the linear solves within each Newton iteration, for NS we use a relative tolerance of $10^{-7}$, and for CH relative tolerance is set to $10^{-7}$. The tolerance for block iteration errors is set to $10^{-4}$ ($\text{block}_\text{tol}$ from \cref{fig:flowchart_block}).

\subsubsection{Case 1 : $\lambda$ of 0.2}

The evolution of the interface along with the mesh adaption is shown in \cref{fig:mesh_adaption_rt}.  \Cref{fig:rt_comparison} shows a qualitative comparison of the interface shape with the shape previously reported for the same density ratio in \citet{Tryggvason1990}.  Although, the initial conditions for the interface in our case (inverted Gaussian) is different than the initial conditions used in \cite{Tryggvason1990} (two-dimensional harmonic wave), the nature of the instability evolving from both of the them is similar where a blob of heavy fluid on top penetrates into light fluid at the bottom, setting up interfacial instabilities.  It can be clearly seem from \cref{fig:rt_comparison} that the shapes at this fairly evolved times are quite similar to each other qualitatively. 
\begin{figure}[H]
	\centering
	\includegraphics[width=0.4\linewidth]{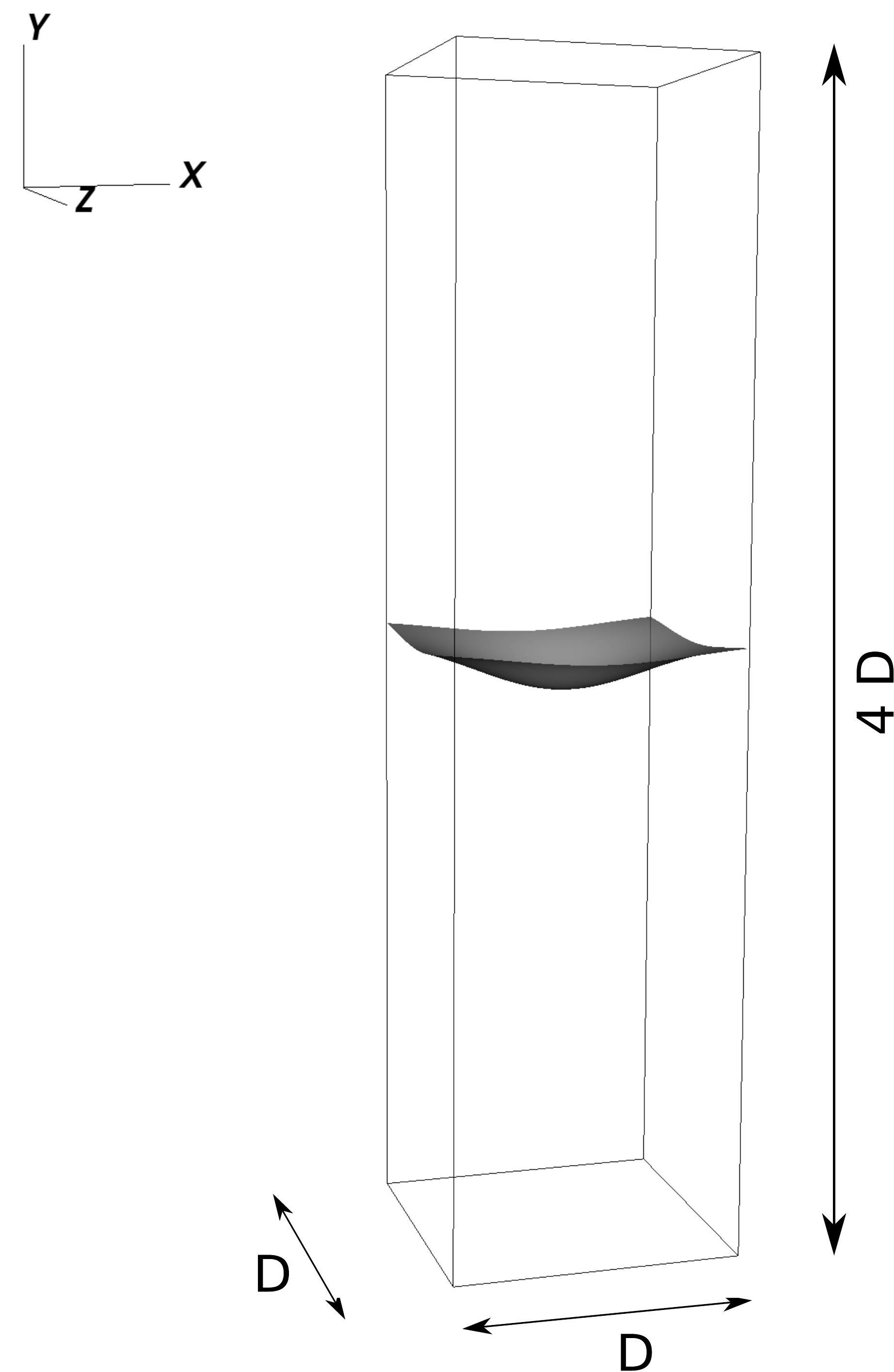}
	\caption{Schematic of the computational domain used for Rayleigh-Taylor instability (\cref{subsec:rayleigh_taylor}) with the iso-surface $\phi = 0$ showing the initial condition of the interface.}
	\label{fig:rayleighTaylorSetup}
\end{figure}

The initial perturbation of the interface (quite similar to that of \citep{Tryggvason1990}) is chosen here such that the front of the heavy fluid penetrates into the light fluid, making the interface buckle. It can be clearly seen in the evolution in \cref{fig:mesh_adaption_rt} that the sides of the interface clinging to the wall maintain a 90 degree angle as they rise to compensate for the motion of the centre front downwards.  \citet{Tryggvason1990} report that two counter rotating vortical structures are formed at the initial position of the interface propagate into the light fluid as vortices advance in with the blob. We also see a similar behavior in our simulations; \cref{fig:rt_streamline} show these two counter rotating vortices colored in blue.  As the fluid interface moves down in the centre, the lighter fluid is displaced and moves rapidly in near the walls going towards the corner setting off two counter rotating vortices near the wall.  These counter-rotating vortices are shown in red color in \cref{fig:rt_streamline}.  The same behavior was reported in \citet{Tryggvason1990}.  It is important to note that the simulation presented in this study is not highly resolved as $Cn$ here is 0.01 (which results in a mesh of about 3.5 million elements).  While we have performed simulations with higher resolution (14 million elements for $Cn = 0.0075$), we emphasize that a coarse resolution is able to resolve most of the physics reported in the literature.

\begin{figure}[H]
	\centering
	\includegraphics[width=0.85\linewidth]{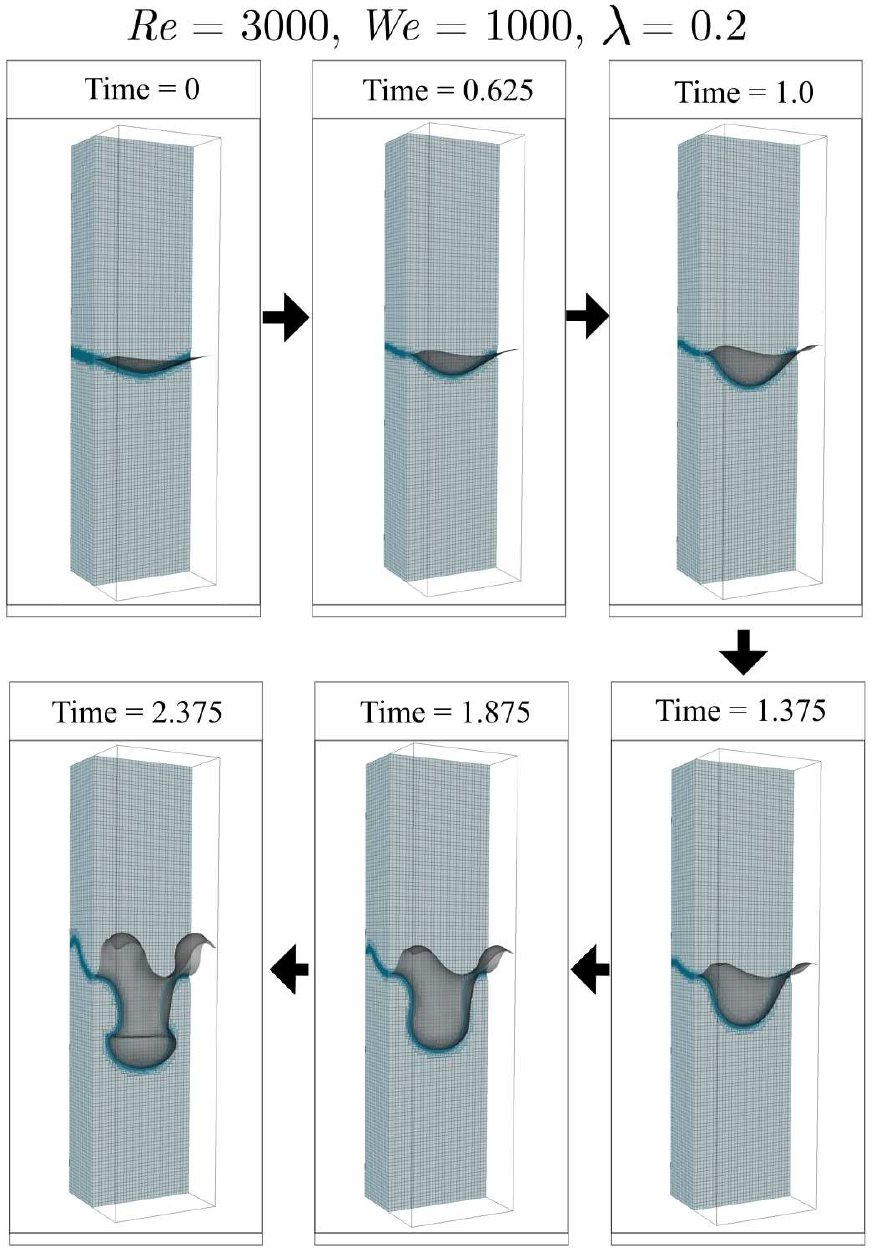}
	\caption{\textit{Evolution of mesh:} Snapshots of the mesh at various time-points in the simulation for Rayleigh-Taylor instability (\cref{subsec:rayleigh_taylor}) for $\lambda = 0.2$.  Only half the mesh of the actual domain is shown in the figure to illustrate the refinement around the interface of two fluids.}
	\label{fig:mesh_adaption_rt}
\end{figure}

\begin{figure}[H]
	\centering
	\includegraphics[width=0.75\linewidth]{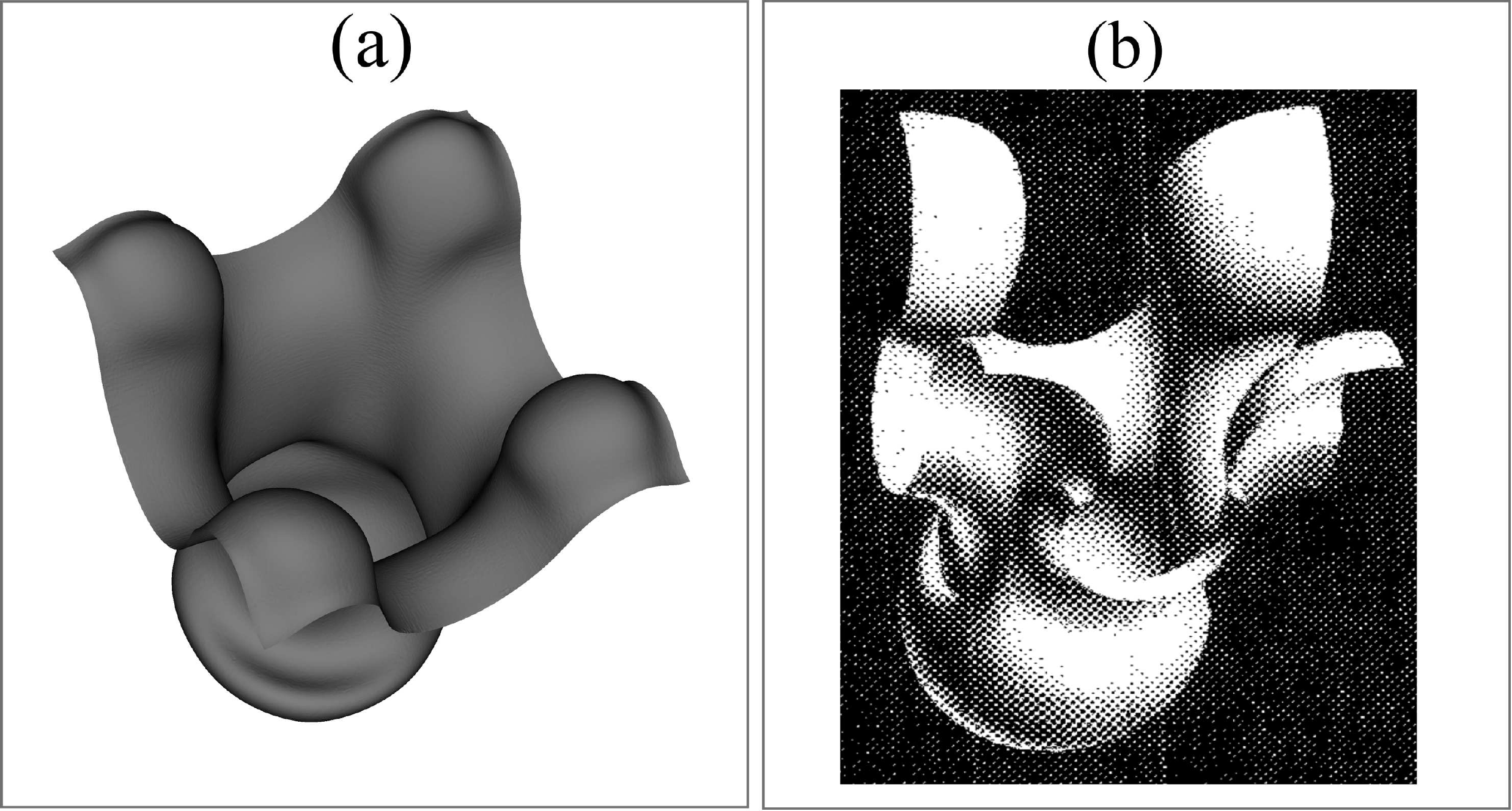}
	\caption{\textit{Qualitative comparison with previous literature} (\cref{subsec:rayleigh_taylor}): (a) Interface shape from current simulation at non-dimensional time 2.37; (b) interface shape reported in \citet{Tryggvason1990} at t = 3.0 (Reproduced from [G. Tryggvason, S. O. Unverdi, Computations of three-dimensional Rayleigh-Taylor instability,
		Physics of Fluids A: Fluid Dynamics 2 (5) (1990) 656--659
		], with the permission of AIP Publishing).}
	\label{fig:rt_comparison}
\end{figure}

\begin{figure}[H]
	\centering
	\includegraphics[width=0.5\linewidth]{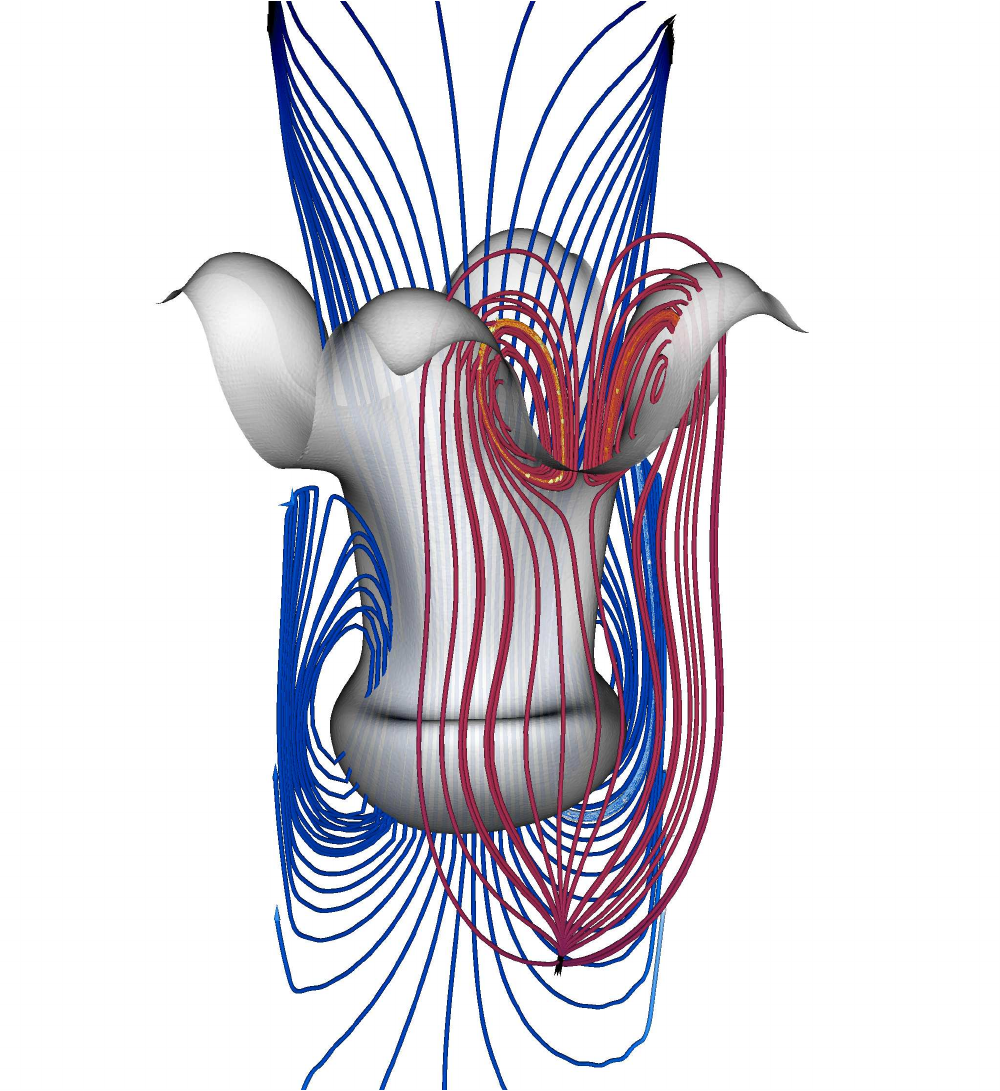}
	\caption{\textit{Streamlines drawn on top of the interface}
	(\cref{subsec:rayleigh_taylor}): Streamlines illustrating the vortical structures in Rayleigh-Taylor instability. The blue streamlines show the rollup of interface near the leading end of the interface.  The red-orange streamlines show the roll up of interface near the boundaries.}
	\label{fig:rt_streamline}
\end{figure}

\subsubsection{Case 2 : $\lambda$ of 0.08}

For this case we keep all the parameters the same as the case 1 except the $\lambda$ is decreased to 0.08.  The evolution of the interface is shown in \cref{fig:mesh_adaption_rt_c2}.  In this particular case we let the interface develop more to observe roll up and shedding at non-dimensional time t = 2.875.  A smaller $\lambda$ in the initial conditions allows for a much flatter initial profile of the interface at the wall, but a deeper penetration at the centre.  If we compare the evolution of the interface shape for the case of $\lambda = 0.2$ (\cref{fig:mesh_adaption_rt}) and for the case of $\lambda = 0.08$ (\cref{fig:mesh_adaption_rt_c2}), we observe that similar shapes are observed much sooner for the case of $\lambda = 0.08$.  For example, the shape of the interface at t = 2.375 in the case of $\lambda = 0.2$ is similar to the shape of interface at t = 1.875 near the center. 

Just like for the case of bubble rise, we check the behavior of the energy functional to observe whether energy stability is followed.  We present the evolution of the energy functional defined in \cref{eqn:energy_functional}.  \Cref{fig:RT_energy} shows the decay of the total energy functional in accordance with the energy stability condition. Similar behavior of the energy functional is also seen for the case of $\lambda = 0.2$.


\begin{figure}[H]
	\centering
	\includegraphics[width=0.75\linewidth]{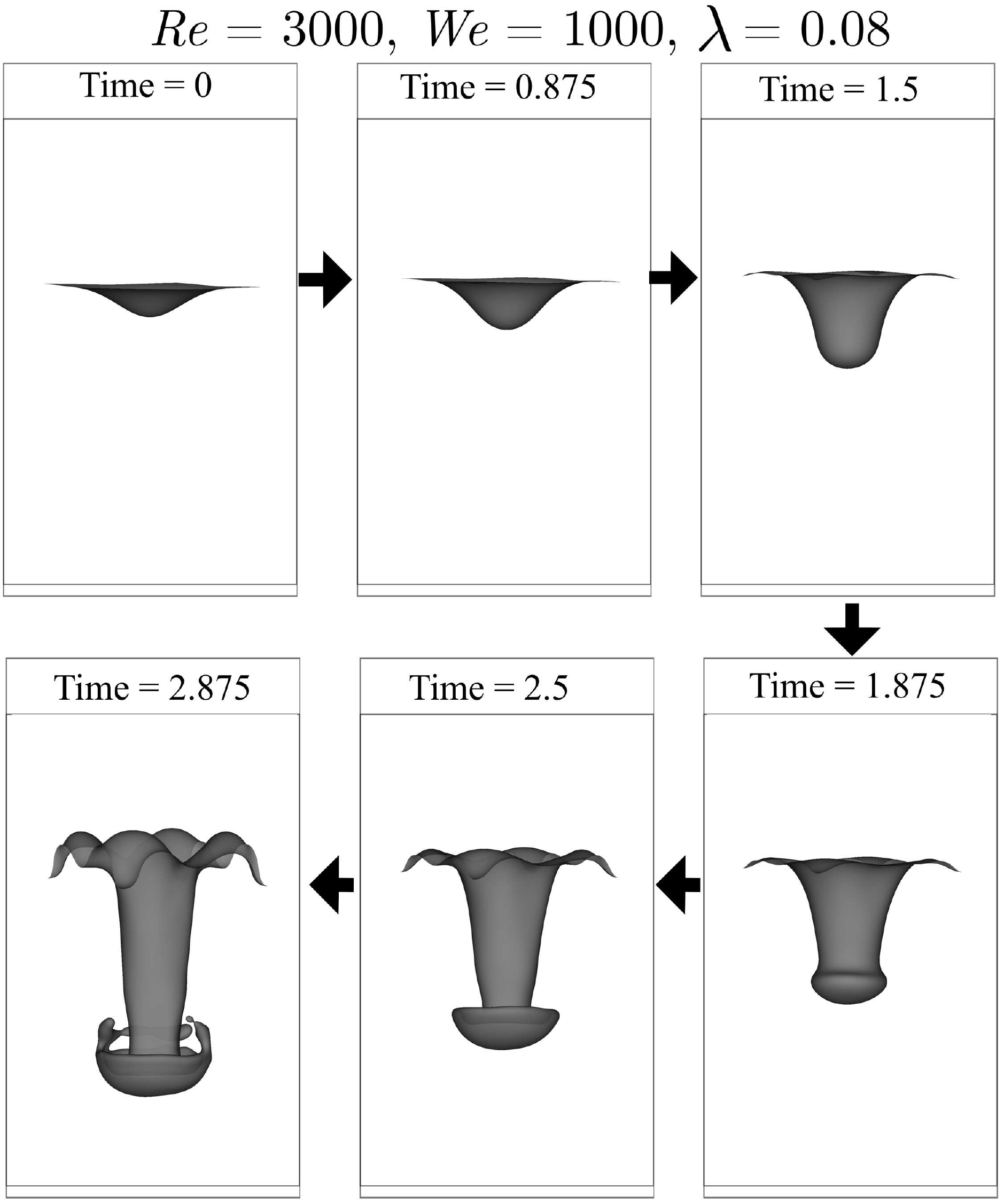}
	\caption{\textit{Evolution of the interface}
	(\cref{subsec:rayleigh_taylor}): Snapshots of the interface at various time-points in the simulation for Rayleigh-Taylor instability for $\lambda = 0.08$.}
	\label{fig:mesh_adaption_rt_c2}
\end{figure}

\begin{figure}[H]
	\centering
	\begin{tikzpicture}
	\begin{axis}[width=0.45\linewidth,scaled y ticks=true,xlabel={Time (-)},ylabel={$E_{tot}(\vec{v},\phi,t)$},legend style={nodes={scale=0.65, transform shape}}, xmin=0, xmax=3, xtick={0,1,2,3}
	]
	\addplot +[mark size = 1pt, each nth point=20, filter discard warning=false, unbounded coords=discard] table [x={time},y={TotalEnergy},col sep=comma] {RT_Energy_data_total.csv};
	\end{axis}
	\end{tikzpicture}
	\hskip 5pt
	\caption{Decay of the energy functional illustrating  \cref{th:energy_stability} for the case of $Re = 3000$ and $We = 1000$
	in the simulation of the 3D Rayleigh-Taylor instability
	(\cref{subsec:rayleigh_taylor}).
	}
	\label{fig:RT_energy}
\end{figure}
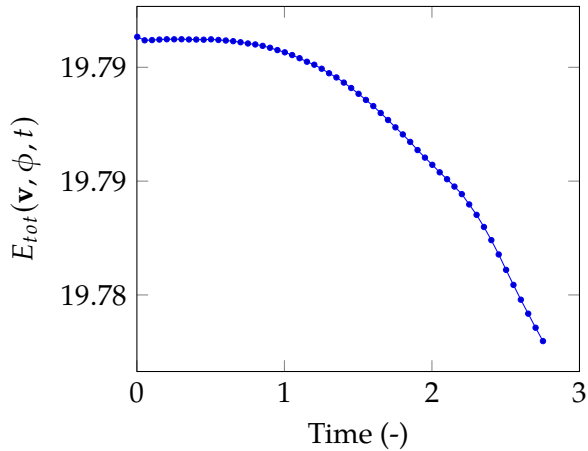

\section{Scaling of the numerical implementation}
\label{sec:scaling}
We perform scaling analysis to demonstrate scaling and parallelization of the framework.  All scaling tests were performed on TACC Stampede2 using the Knights Landing processors ($p = 136,\dots,17408$).  We used the bubble rise case in \cref{subsec:single_rising_drop} as a sample case for the scaling analysis with $Ar = 13.95$ and $We = 116$.  The bubble rise case for each scaling experiment is run for 5 time-steps so that any deviations from the long time behavior (timing of convergence in non-linear solves) in the initial time-steps does not dominate the timing.  We adaptively refine the mesh around the interface of the sphere five levels deeper than the rest of the background mesh.  The mesh is defined by a pair of minimum refinement $C$ and maximum refinement $R$, where the background mesh element size ranges from $8/2^C$ to $8/2^R$ at the interface.   We run this experiment on four background/interface refinement levels: 5/10, 6/11, 7/12, 8/13. Each refinement level has roughly seven to eight times more degrees of freedom to solve for than the previous level, with 5/10 having around 800,000 degrees of freedom and 8/13 reaching 138 million degrees of freedom.

We note that given specific $C$ and $R$ and the same initial conditions, the overall problem size in spite of mesh-refinement is consistent independent of the number of processes being used for the simulation. To this effect, we believe presenting performance for different $C/R$ combinations for different number of processes in the style of a strong scaling is appropriate.  For the same initial conditions, non-dimensional numbers, and a specific choice of refinement levels ($C$ and $R$), the problem is consistent independent of the number of processes being used, which allows us to use strong scaling type analysis.  Therefore, we vary the number of processes for each combination of $C$ and $R$ and present the timing information.  \Cref{fig:strong_scaling} shows the strong scaling analysis, and it can be seen that c code scales well, with continuing reductions in time-to-solve. Performing weak scaling for our case is harder because of mesh refinements, and subsequently the change in problem size. Therefore, we derive the weak-scalability from a set of strong scaling experiment.  We connect the points which roughly have the same number of elements per process to achieve this.  \Cref{fig:weak_scaling} shows weak scaling with curves being dashed lines for weak scaling, with the aforementioned approximation.  

From \cref{fig:strong_scaling,fig:weak_scaling} we observe that the numerical framework shows very good strong and weak scaling. 

\begin{figure}
	\centering
	\begin{tikzpicture}
	\begin{loglogaxis}[width=300pt,scaled y ticks=true,xlabel={Stampede2 KNL processors},ylabel={Time (s)},legend entries={C/R=5/10,C/R=6/11,C/R=7/12,C/R=8/13},legend style={nodes={scale=0.65, transform shape}}]
	\addplot table [x={ntasks},y={summary},col sep=comma, discard if not={refine_lvl_base}{5}] {results_moving.csv};
	\addplot table [x={ntasks},y={summary},col sep=comma, discard if not={refine_lvl_base}{6}] {results_moving.csv};
	\addplot +[mark=triangle*] table [x={ntasks},y={summary},col sep=comma, discard if not={refine_lvl_base}{7}] {results_moving.csv};
	\addplot +[mark=diamond*] table [x={ntasks},y={summary},col sep=comma, discard if not={refine_lvl_base}{8}] {results_moving.csv};
	\end{loglogaxis}
	\end{tikzpicture}
	\caption{Total time to solve five timesteps (including remeshing) for varying refinement configurations $C/R$ for a bubble rise in a channel.  The setup is identical to the case selected in section \ref{subsec:single_rising_drop} with the domain being the one presented in figure \ref{fig:bubblerisesetup}.} 
	\label{fig:strong_scaling}
\end{figure}
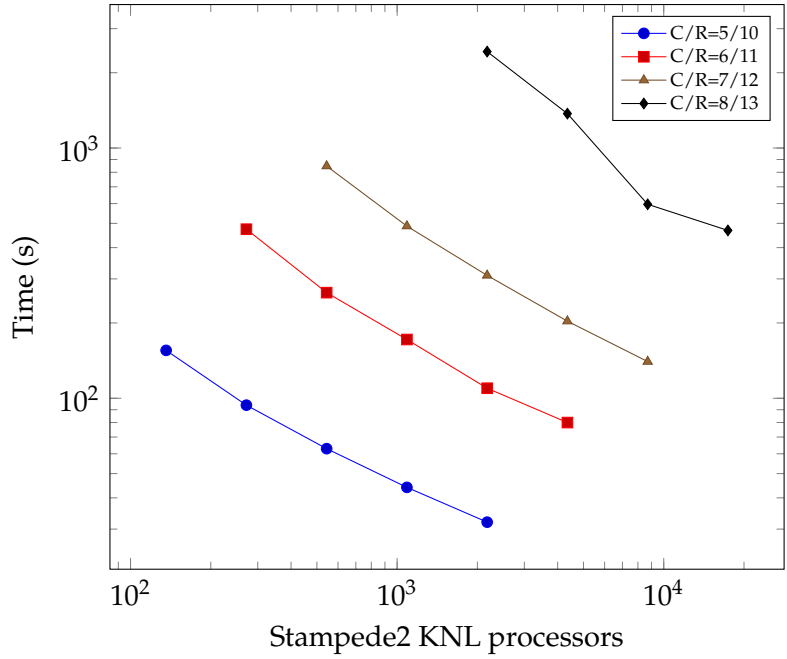 

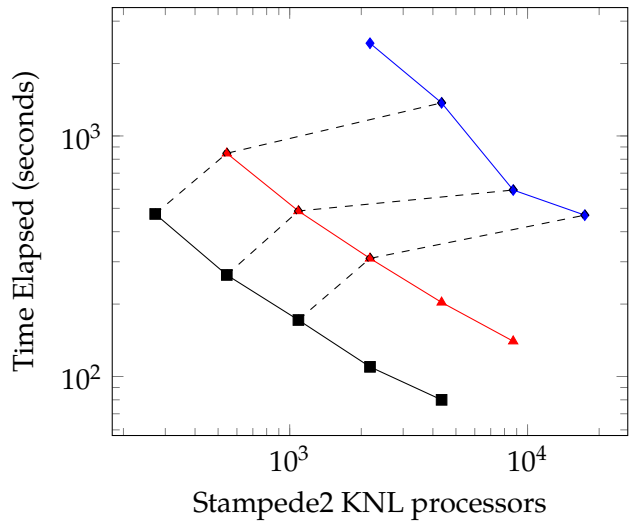
\begin{figure}
	\centering
	\begin{tikzpicture}
	\begin{loglogaxis}[
	xlabel=Stampede2 KNL processors,
	ylabel=Time Elapsed (seconds),
	legend style={at={(0.95,0.95)},anchor=north east}]
	\addplot[color=black,mark=square*] coordinates {
		(272,    473.8485147059)
		(544,    264.3035367647)
		(1088,    171.7272996324)
		(2176,    109.6786107537)
		(4352,    79.9761809283)
	};
	\addplot[color=red,mark=triangle*] coordinates {
		(544,    847.0702904412)
		(1088,    487.53003125)
		(2176,    309.7423203125)
		(4352,    203.2675539982)
		(8704,    140.1693323759)
	};
	\addplot[color=blue,mark=diamond*] coordinates {
		(2176,    2428.7664016543)
		(4352,    1372.6706962317)
		(8704,   596.0126691177)
		(17408,    468.5404142912)
	};

	\addplot[color=black,mark=diamond,dashed] coordinates {
		(272, 473.8485147059)
		(544, 847.0702904412)
		(4352, 1372.6706962317)
	};
	\addplot[color=black,mark=diamond,dashed] coordinates {
		(544, 264.3035367647)
		(1088,  487.53003125)
		(8704, 596.0126691177)
	};
	\addplot[color=black,mark=diamond,dashed] coordinates {
		(1088,    171.7272996324)
		(2176,    309.7423203125)
		(17408,    468.5404142912)
	};
	\end{loglogaxis}
	\end{tikzpicture} 
	\caption{\textit{Weak scalability approximated from multiple strong-scaling experiments}: We approximate the weak (dashed lines) scaling from the 
		strong (solid lines) scaling results for $r = (6/11,7/12,8/13)$ and $p$ up to $17408$ on Stampede2 Knights Landing processors.  Connecting the points which have same approximately same number of elements per process.
		\label{fig:weak_scaling} }
\end{figure}


\section{Conclusions and future work}

We have reported on a continuous Galerkin (cG) based framework to simulate two-phase flows with the thermodynamically consistent Cahn-Hilliard Navier-Stokes model.  We present rigorous proofs of energy stability for the implicit time scheme that we have selected.  We also present an existence result for our system, particularly studying the advective Cahn-Hilliard operator.  A variational multi-scale approach is used to model momentum equations and provide grad-div stabilisation for the proposed cG method.  The continuous model is discretized in space using a cG method with a massively parallel adaptive meshing framework called \dendro.  Extensive numerical experiments were carried out to test the accuracy of the numerical model.  The numerical model was validated against experimental datasets for an extreme density ratio (100 to 10000) and showed excellent agreement with the experimental results. We show that the model performs as good or better in comparison with front-tracking or level set models.  We demonstrated that the fully discretized numerical scheme also follows the energy stability and mass conservation proved for the semi-discrete form of the model.  Further, we used the model to simulate a turbulent case of Rayleigh-Taylor instability and the results presented show that the model resolves most of the physics reported in the literature using front tracking models.  We carried out extensive scaling tests of the numerical framework and show excellent weak and strong scaling till about 17000 MPI processes.  

We identify several avenues for future work. One critical aspect is related to practical applications of the CH-NS model, which require 3D simulations across long time horizons. As reported here, adaptive mesh refinement (AMR) provides an efficient strategy to model practical applications. AMR requires interpolating fields from a mesh at the previous time step to an updated mesh at the current time step. Care has to be taken to ensure that these interpolation schemes are also mass-conserving. In the current work, we see that the coarsening step (where eight daughter cells are replaced by one larger parent cell) could potentially not conserve mass. This suggests deployment of mass-conserving interpolation schemes, which is the focus of our subsequent work.

Another avenue of work is to extend the order of the basis function to consider higher order cG spaces. Higher order methods are useful on two fronts: (1) they improve the overall quality of the solutions and better enforce the solenoidality of mixture velocity; and (2) they also decrease the disparity in the largest and smallest scales for low Reynolds number applications where the disparity in length scales is only because the interface needs to be resolved with a high density of fine elements.  This will help speed up the framework for optimization applications targeted towards micro-fluidics.  We are currently working on developing a fully coupled solver instead of the block iteration approach presented in this paper.  The fully coupled approach will be faster because of less number of matrix assemblies.  In the current paper the mesh is only refined near the interface (interface scales), we are working on developing {\it a posteriori} estimates to refine the mesh based on both interface and velocity scales.  This will help resolve wakes and boundary layers much better in many applications involving turbulence.

\section{Acknowledgements}

The authors acknowledge XSEDE grant number TG-CTS110007 for computing time on TACC Stampede2, ALCF computing resources and Iowa State University computing resources. JAR was supported in part by NSF Grant DMS--1620128. BG, AL and MAK was funded in part by NSF grant 1935255, 1855902. HS was funded in part by NSF grant 1912930.

\bibliographystyle{elsarticle-num-names}
\bibliography{total_references,hari}


\appendix
\section{Proofs of some elementary propositions}
\label{sec:appendix1}
\begin{proposition} The following identity holds:
		\begin{equation}
		\pd{\tilde{\phi}^{k}}{x_j}\left(\pd{}{x_j}\left({\pd{\tilde{\phi}^{k}}{x_i}}\right)\right) = \frac{1}{2}\pd{}{x_j}\left(  \pd{\tilde{\phi}^{k}}{x_i} \pd{\tilde{\phi}^{k}}{x_i} \right)
		\end{equation}
			$\forall \;\; \widetilde{\phi}^k$, $\in H^1(\Omega)$, where $\phi^k, \phi^{k+1}, \mu^{k},\mu^{k+1}, \vec{v}^k, \vec{v}^{k+1}$ solves \cref{eqn:nav_stokes_var_semi_disc} -- \cref{eqn:phi_eqn_var_semi_disc}.
\label{prop:forcing_aside}
\end{proposition}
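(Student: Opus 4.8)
The plan is to establish the identity by a direct, purely local computation, working from the right-hand side back to the left; no integration by parts, boundary data, or appeal to the governing system is required, so the hypotheses on $\vec{v}^k,\phi^{k},\mu^{k},\dots$ play no role beyond guaranteeing that $\tphi^k$ is a well-defined function. The right-hand side is one-half the derivative of the squared gradient $\norm{\nabla\tphi^k}^2 = \pd{\tphi^k}{x_j}\pd{\tphi^k}{x_j}$ (summed over $j$). The natural first move is to apply the product rule to the outer derivative: differentiating $\pd{\tphi^k}{x_j}\pd{\tphi^k}{x_j}$ produces two identical contributions of the form $\pd{\tphi^k}{x_j}\,\pd{}{x_i}\!\left(\pd{\tphi^k}{x_j}\right)$, and the prefactor $\tfrac12$ cancels the resulting factor of two.

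What remains is the single step of commuting the order of the two partial derivatives, i.e. invoking the symmetry of mixed second derivatives, $\pd{}{x_i}\!\left(\pd{\tphi^k}{x_j}\right) = \pd{}{x_j}\!\left(\pd{\tphi^k}{x_i}\right)$. Substituting this reproduces exactly $\pd{\tphi^k}{x_j}\,\pd{}{x_j}\!\left(\pd{\tphi^k}{x_i}\right)$, which is the left-hand side. This is the form in which the identity is consumed in \cref{eqn:forcing_expan} of \cref{lem:forcing_equivalent}, so matching that usage is the correct target. The algebra is entirely routine once the product rule and the Schwarz symmetry are in place.

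The one genuine subtlety — and the place where the stated hypothesis $\tphi^k\in H^1(\Omega)$ must be read charitably — is regularity: the expression manipulates second-order derivatives of $\tphi^k$, which need not exist classically for a generic $H^1$ function. I expect this to be the only real obstacle, and the clean way to dispatch it is to interpret all derivatives distributionally. Distributional derivatives always commute (the equality $\partial_i\partial_j = \partial_j\partial_i$ holds for any distribution with no extra regularity), so the commutation step survives verbatim; the product-rule step is then justified whenever $\tphi^k$ is smooth enough for the pointwise products to be meaningful, which is precisely the setting in which the lemma is applied. I would therefore phrase the argument for $\tphi^k$ smooth (equivalently $\tphi^k\in H^2(\Omega)$) and remark that the distributional reading extends it to the stated space, treating this regularity bookkeeping — rather than the vector calculus — as the substantive content.
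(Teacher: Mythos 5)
Your proof is correct and is essentially the argument the paper gives: the paper invokes the vector identity $\tfrac{1}{2}\nabla\norm{\vec{A}}^2 = (\vec{A}\cdot\nabla)\vec{A} + \vec{A}\times(\nabla\times\vec{A})$ with $\vec{A}=\nabla\tphi^k$ and kills the curl term because the curl of a gradient vanishes, which is exactly your product-rule-plus-Schwarz computation written in Levi-Civita notation. Your added remark on regularity (that the identity really lives at the $H^2$ or distributional level rather than for bare $H^1$ functions) is a point the paper glosses over, and is a reasonable refinement rather than a deviation.
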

\begin{proof}
	We just need to recall a vector identity to prove this.
	Recall the vector identities
	\begin{align}
	\frac{1}{2}\pd{\left(A_i A_i\right)}{x_j} &= \left(A_i\pd{}{x_i}\right)A_j + \epsilon_{ijk}A_j\left(\epsilon_{klm}\pd{A_m}{x_l}\right),\label{eqn:vec_id_cross}\\
	\epsilon_{ijk}\pd{}{x_j}\left( \pd{f}{x_k}\right) &= 0, \label{eqn:cross_zero_scalar}
	\end{align}
	where $f$ is a scalar function and $\epsilon$ is the Levi-Civita symbol.
	In our case $A_j = \pd{\tilde{\phi}^{k}}{x_j}$, which causes the cross product term in \cref{eqn:vec_id_cross} to be zero from \cref{eqn:cross_zero_scalar}, and which leads to the desired result.
\end{proof}

\begin{lemma} 
	The variational evolution term from the Cahn-Hilliard contribution in the momentum equation, 
	\cref{eqn:nav_stokes_var_semi_disc}, can be written as follows:  
	\begin{equation}
	\begin{split}
	\left(\rho\left(\phi^{k+1}\right)\left(v^{k+1}_i - v^k_i\right), \tvi^k\right) & = 
	\quad \frac{1}{2}\int_{\Omega} 
	\left\{ \rho\left(\phi^{k+1}\right)\norm{{\vec{v}}^{k+1}}^2 - \rho\left(\phi^{k}\right)\norm{\vec{v}^{k}}^2 \right\}  \, d\vec{x}
	\end{split}
	\label{eqn:evolution_term_lemma}
	\end{equation}
	$\forall \;\; \tphi^k$, $\phi^{k}$, $\phi^{k+1} \in  H^1(\Omega)$, and $\forall \;\; {\vec{v}}^k, {\vec{v}}^{k+1} \in \vec{H}_{0}^1(\Omega)$, where  ${\vec{v}}^k, {\vec{v}}^{k+1}, \phi^k, \phi^{k+1}$ satisfy \cref{eqn:nav_stokes_var_semi_disc} -- \cref{eqn:phi_eqn_var_semi_disc},
	and
	\begin{equation}
	\begin{split}
	\norm{\vec{v}}^2 &:= \sum_i \abs{v_i}^2.
	\end{split}
	\end{equation}
	\label{lem:evol_estimate}
\end{lemma}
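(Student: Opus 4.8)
The plan is to reduce the claim to an elementary pointwise algebraic identity and then to reconcile the two density evaluations using the discrete mass balance. First I would exploit the defining relation $\tvi^k = (v_i^{k+1}+v_i^k)/2$: pointwise, and summing over $i$ under the Einstein convention, a difference of squares appears,
\begin{equation}
\left(v_i^{k+1} - v_i^k\right)\tvi^k = \tfrac{1}{2}\left(v_i^{k+1}-v_i^k\right)\left(v_i^{k+1}+v_i^k\right) = \tfrac{1}{2}\left(\norm{\vec{v}^{k+1}}^2 - \norm{\vec{v}^k}^2\right).
\end{equation}
Multiplying by $\rho(\phi^{k+1})$ and integrating over $\Omega$ then yields
\begin{equation}
\left(\rho(\phi^{k+1})(v_i^{k+1}-v_i^k),\tvi^k\right) = \tfrac{1}{2}\int_\Omega \rho(\phi^{k+1})\left(\norm{\vec{v}^{k+1}}^2 - \norm{\vec{v}^k}^2\right)d\vec{x},
\end{equation}
which already exhibits the difference-of-squares structure of the target, but with $\rho(\phi^{k+1})$ multiplying \emph{both} kinetic-energy densities.

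The substantive step is to replace $\rho(\phi^{k+1})$ by $\rho(\phi^k)$ in the term carrying $\norm{\vec{v}^k}^2$, so as to produce the telescoping form demanded by the discrete energy functional \cref{eqn:energy_functional_disc}. To do this I would isolate the residual mismatch $\tfrac{1}{2}\int_\Omega\bigl(\rho(\phi^{k+1}) - \rho(\phi^k)\bigr)\norm{\vec{v}^k}^2\,d\vec{x}$ and substitute for the density increment using the discrete continuity equation \cref{eqn:disc_consv_semi}, which writes $\rho(\phi^{k+1}) - \rho(\phi^k)$ as $-\delta t$ times the divergence of the mass flux $\rho(\tphi^k)\tvi^k + \tfrac{1}{Pe}\tJi^k$. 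An integration by parts then transfers the derivative onto $\norm{\vec{v}^k}^2$, with the boundary contributions vanishing because $\widetilde{\vec{v}}^k=\vec{0}$ on $\partial\Omega$ and because $\tJi^k\hat{n}_i = 0$ follows from the no-flux condition on $\tmu^k$ together with \cref{eqn:disc_time_ns_semi}.

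I expect this density reconciliation to be the main obstacle. Because the scheme carries a solenoidal mixture velocity while the momentum $\rho\vec{v}$ is not divergence free (cf. the balance \cref{eqn:disc_consv_semi}), the variable-density convective terms are not exactly antisymmetric, so the increment produced by the continuity substitution cannot be disposed of by the evolution term in complete isolation. The remaining—and most careful—work is to track this divergence term and to confirm that, once solenoidality \cref{eqn:disc_cont_semi} and the skew-symmetric structure of the convective trilinear forms recorded in \cref{rem:skew-symmetry} are invoked, it is consistently accounted for in the surrounding estimate of \cref{lem:advection_estimate}, leaving precisely the stated right-hand side. I would keep this cancellation explicit, since it is the one point at which the full set of scheme equations \crefrange{eqn:nav_stokes_var_semi_disc}{eqn:phi_eqn_var_semi_disc}, rather than pure algebra, genuinely enters the argument.
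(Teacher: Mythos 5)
Your proposal follows the paper's route essentially step for step: the pointwise difference of squares coming from $\tvi^k=(v_i^{k+1}+v_i^k)/2$, the add--and--subtract of $\tfrac{1}{2}\int_\Omega\rho(\phi^{k})\norm{\vec v^{k}}^2\,\mathrm{d}\vec x$, the substitution of the density increment $\rho(\phi^{k+1})-\rho(\phi^{k})$ via the discrete continuity equation \cref{eqn:disc_consv_semi}, and the integration by parts with boundary contributions killed by $\widetilde{\vec v}^k\vert_{\partial\Omega}=\vec 0$ and the no-flux condition on $\tmu^k$. Where you part company with the paper is the last step. After integrating by parts one is left with a trilinear residual of the form $\int_\Omega\left(\rho(\tphi^k)\tvi^k+\tfrac{1}{Pe}\tJi^k\right)v_j^k\,\pd{v_j^k}{x_i}\,\mathrm{d}\vec x$; the paper asserts this vanishes outright by solenoidality of the mixture velocity, whereas you decline to claim this and instead propose to carry the residual forward and verify that it is ``consistently accounted for'' inside \cref{lem:advection_estimate}. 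Since the lemma is stated as an exact, self-contained identity, a proof must dispose of this term here; deferring its cancellation to a surrounding estimate leaves the identity unproven, and that is the concrete gap in your proposal as written.

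That said, your hesitation is aimed at exactly the right place, and it is the paper's closing step that is the delicate one. The transporting field $\rho(\tphi^k)\tvi^k+\tfrac{1}{Pe}\tJi^k$ is \emph{not} divergence free --- by \cref{eqn:disc_consv_semi} its divergence equals $-(\rho(\phi^{k+1})-\rho(\phi^{k}))/\delta t$ --- so integrating by parts against $\pd{}{x_i}\norm{\vec v^k}^2=2v_j^k\,\pd{v_j^k}{x_i}$ merely regenerates the mismatch term you started from; the cancellation does not follow from $\pd{\tvi^k}{x_i}=0$ alone, and the paper's one-line dismissal rests on writing the gradient of $\norm{\vec v^k}^2$ with the component index identified with the derivative index. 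To turn your proposal into a proof you must either supply an actual argument for why the trilinear residual vanishes (or can legitimately be dropped), or restate the lemma with that residual kept explicitly on the right-hand side and cancelled later against the convective terms --- but you cannot simply gesture at \cref{lem:advection_estimate} and stop.
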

\begin{proof}
We start with the left-hand side and expand into two terms:
	\begin{align}
	\begin{split}
	& \left(\rho\left(\phi^{k+1}\right)\left(v^{k+1}_i - v^k_i\right), \tvi^k\right) = 
	\frac{1}{2}\int_{\Omega}\left\{ \rho\left(\phi^{k+1}\right)\norm{{\vec{v}}^{k+1}}^2 - \rho\left(\phi^{k+1}\right)\norm{\vec{v}^{k}}^2 \right\}  \, d\vec{x},
	\end{split}
	\label{eqn:evol_inner_prod}
	\end{align}
	where we made use of the fact that $\tvi^k = (v_i^{k+1}+v_i^k)/2$.  
	Next we add and subtract the same term:
	\begin{equation*}
	\label{eqn:evol_inner_prod2}
	\begin{split}
	 \left(\rho\left(\phi^{k+1}\right)\left(v^{k+1}_i - v^k_i\right), \tvi^k\right) &=   
	\frac{1}{2}\int_{\Omega}\left\{ \rho\left(\phi^{k+1}\right)\norm{{\vec{v}}^{k+1}}^2 - \rho\left(\phi^{k+1}\right)\norm{\vec{v}^{k}}^2 \right\}  \, d\vec{x}\\
	&+ \frac{1}{2}\int_{\Omega}\left\{ \rho\left(\phi^{k}\right)\norm{{\vec{v}}^{k}}^2 - \rho\left(\phi^{k}\right)\norm{\vec{v}^{k}}^2 \right\}  \, d\vec{x} \\
	&=  \frac{1}{2}\int_{\Omega}\left\{ \rho\left(\phi^{k+1}\right)\norm{{\vec{v}}^{k+1}}^2  -  \rho\left(\phi^{k}\right)\norm{\vec{v}^{k}}^2 \right\}  \, d\vec{x}\\
	&- \frac{1}{2}\int_{\Omega}\left\{ \rho\left(\phi^{k+1}\right) - \rho\left(\phi^{k}\right) \right\} \norm{\vec{v}^{k}}^2 \, d\vec{x}.
\end{split}
	\end{equation*}
Using \cref{eqn:disc_consv_semi} we can evaluate the last term in the above equation as follows:  
	\begin{align}
	\begin{split}
	- \frac{1}{2}\int_{\Omega}\left\{ \rho\left(\phi^{k+1}\right) - \rho\left(\phi^{k}\right) \right\} \norm{\vec{v}^{k}}^2 \, d\vec{x} &=   
	\frac{1}{2}\int_{\Omega} \pd{}{x_i} \left\{ \rho\left(\tphi^{k} \right)\tvi^{k} + \frac{1}{Pe}\tJi^{k}  \right\}  \norm{\vec{v}^{k}}^2 \, d\vec{x}\\
	&= - \frac{1}{2}\int_{\Omega} \left\{ \rho\left(\tphi^{k} \right)\tvi^{k} + \frac{1}{Pe}\tJi^{k}  \right\} \pd{}{x_i}\norm{\vec{v}^{k}}^2 \, d\vec{x}\\
	&= - \frac{1}{2}\int_{\Omega} \left\{ \rho\left(\tphi^{k} \right)\tvi^{k} + \frac{1}{Pe}\tJi^{k}  \right\} 2 v_i \pd{v_i^{k}}{x_i} \, d\vec{x}\\
	&= 0,
	\end{split}
	\label{eqn:evol_inner_prod3}
	\end{align}
	where we used the solenoidality of the mixture velocity (see~\cref{eqn:solenoidality}).  This gives the desired result
	\cref{eqn:evolution_term_lemma}.
\end{proof}

\section{Details of solver selection for the numerical experiments}
\label{sec:app_linear_solve}
For the cases presented in \cref{subsec:single_rising_drop} we use the BiCGStab linear solver (a Krylov space solver) with additive Schwarz-based preconditioning.  For better reproduction, the command line options we provide {\sc petsc} are given below which include some commands used for printing some norms as well.
\begin{lstlisting}
-ns_ksp_type bcgs
-ns_pc_type asm
-ch_ksp_type bcgs
-ch_pc_type asm
-ns_snes_monitor 
-ns_snes_converged_reason 
-ns_ksp_converged_reason
-ch_snes_monitor
-ch_snes_converged_reason
-ch_ksp_converged_reason
\end{lstlisting}
Here the prefix \codeword{-ch} is for applying the option to the Cahn-Hilliard solver, and \codeword{-ns} for the momentum solver respectively.

For the Rayleigh-Taylor instability case (\cref{subsec:rayleigh_taylor}), which was significantly more expensive, we used an algebraic multigrid (AMG) linear solver with an additive Schwarz method (ASM) as a smoother. To improve the readability we separate the options for momentum and Cahn-Hilliard equations in two separate structures to input them into {\sc petsc} 
and the options are shown below.
\begin{lstlisting}
solver_options_ns = {
	snes_atol = 1e-4
	snes_rtol = 1e-6
	snes_stol = 1e-5
	snes_max_it = 40
	ksp_rtol = 1e-5
	ksp_diagonal_scale = True
	ksp_diagonal_scale_fix = True

	#multigrid

		#solver selection
		ksp_type = "fgmres"
		pc_type = "gamg"
		pc_gamg_asm_use_agg = True
		mg_levels_ksp_type = "bcgs"
		mg_levels_pc_type = "asm"
		mg_levels_sub_pc_type = "lu"
	  	
	#performance options
		mattransposematmult_via = "matmatmult"
		pc_gamg_reuse_interpolation = "True"
		mg_levels_ksp_max_it = 20
};

solver_options_ch = {
	snes_atol = 1e-12
	snes_rtol = 1e-8
	snes_stol = 1e-10
	snes_max_it = 20
  
	# multigrid
		ksp_type = "fgmres"
		pc_type = "gamg"
		pc_gamg_asm_use_agg = True
		mg_levels_ksp_type = "bcgs"
		mg_levels_pc_type = "asm"
	#performance options
		mattransposematmult_via = "matmatmult"
		pc_gamg_reuse_interpolation = "True"
		mg_levels_ksp_max_it = 4
};
\end{lstlisting}

The linear systems we handle are fairly ill-conditioned, therefore, the smoothers we need to use are fairly expensive.  The ASM/LU based smoother is more expensive compared to other smoothers like block Jacobi, however ASM/LU is more robust (better convergence).  This setup works very well with a relatively constant number of Krylov iterations as the number of processes are increased in the massively parallel setting. The scaling results we present use the same setup of solvers,  but there is substantial room for improvement in this area of the code where fieldsplit preconditioners using Schur complement can be used as smoothers to improve speed of the AMG solver.

\end{document}